\numberwithin{equation}{section}
\newtheorem*{proposition*}{Proposition}
\newtheorem*{theorem*}{Theorem}
\newtheorem*{conjecture*}{Conjecture}
\newtheorem*{claim*}{Claim}
\newtheorem*{lemma*}{Lemma}
\newtheorem*{corollary*}{Corollary}
\newtheorem{theorem}{Theorem}[section]
\newtheorem{proposition}[theorem]{Proposition}
\newtheorem*{definition*}{Definition}
\newtheorem*{assumption*}{\mathcal{A}ssumption}
\newtheorem*{remark*}{Remark}
\newtheorem{remark}{Remark}[section]
\newcommand{\snabla}{\slashed{\nabla}}
\numberwithin{equation}{section}
\begin{document}
\title{Semi-global constructions of spacetimes containing curvature singularities}
\author{Yannis Angelopoulos \thanks {The Division of Physics, Mathematics and Astronomy, Caltech,
1200 E California Blvd, Pasadena CA 91125, USA, \newline yannis@caltech.edu}}
\date{}

\maketitle
\begin{abstract}
We construct semi-global $(1+3)$-dimensional Lorentzian spacetimes satisfying the Einstein vacuum equations that contain curvature singularities that are propagated all the way up to future null infinity. Special cases of our constructions are semi-global spacetimes containing the interaction of two impulsive gravitational waves. The spacetimes that we construct can be considered as the semi-global analogues of the \textit{local} spacetimes constructed in \cite{weaknull} by Luk, and in \cite{iwaves1} and \cite{iwaves2} by Luk and Rodnianski. 
\end{abstract}

\section{Introduction}

We consider the characteristic initial value problem for the Einstein vacuum equations
\begin{equation}\label{einstein}
R_{\mu \nu } (g) = 0 .
\end{equation}
The initial data, that are given on two intersecting null hypersurfaces, require \textit{only} integrability in certain weighted norms for the connection coefficients hence allowing the curvature components to be of rather singular nature. 

The goal of the present article is to construct \textit{semi-global} spacetimes that contain curvature singularities. More specifically we construct spacetimes that are geodesically complete in certain directions, that contain a portion of future null infinity, and where there exists a curvature singularity that gets propagated all the way up to future null infinity. 

We present two main results: a) in the first of these results the curvature singularity that reaches future null infinity is a \textit{weak null singularity} (so future null infinity is incomplete in this case), that is in terms of the ingoing parameter $u$ (that takes values in a finite interval) the ingoing traceless second fundamental form $\underline{\hat{\chi}}$ is only in $L^1_u$ but not in any $L^p_u$ for any $p > 1$, and at the same time we impose very weak decay in terms of the outgoing parameter $v$ (that reaches infinity) for the outgoing traceless second fundamental form $\hat{\chi}$ in terms of BV-type norms, b) in the second of these results we impose stronger pointwise decay in $v$ in terms of the $L^2$ norm of $\hat{\chi}$ over spheres, while the singularity is an \textit{impulsive gravitational wave}, that is $\underline{\hat{\chi}}$ has a jump discontinuity on the initial ingoing null hypersurface. Below we state two informal versions of our main results.

The first result (as mentioned above) can be considered as a semi-global version of the construction of a weak null singularity that was given by Luk in \cite{weaknull}. In the infinite direction we close the estimate in a scale-invariant norm that can be considered critical from the point of view that if this norm is allowed to be large then a trapped surface should be expected to form (at this point it is worth pointing out that BV norms -- that are also scale-invariant -- were previously used in the work of Christodoulou \cite{DC93} in spherical symmetry).

\begin{theorem}
Assume that we are given data on two intersecting null hypersurfaces $\underline{H}_{v_0}$ and $H_{u_0}$ that intersect on a topological 2-sphere $S_{u_0 , v_0}$ such that:

i) on $\underline{H}_{v_0}$ where $u \in [u_0 , U)$ for some $U < \infty$, we assume that $\hat{\underline{\chi}} \in L^1_u$ and $\hat{\underline{\chi}} \notin L^p_u$ for any $p > 1$ -- in particular we can assume that 
$$ | \hat{\underline{\chi}} |_{\underline{H}_{v_0}} \sim \frac{1}{( U - u) \log^q \left( \frac{1}{U-u} \right)} , $$
for some $q > 1$, and we assume that the rest of the Ricci and Riemann curvature components are smooth,

ii) on $H_{u_0}$ where $v \in [v_0 , \infty )$, we assume that
$$ \int_{v_0}^{\infty} \frac{1}{v} \| \hat{\chi} \|_{L^2 (S_{u_0 , v })} \, dv + \int_{v_0}^{\infty} \frac{1}{v} \| \hat{\chi} \|_{L^2 (S_{u_0 , v })}^2 \, dv \leq C < \infty, $$
and 
$$ \int_{v_0}^{\infty} \| \omega \|_{L^2 (S_{u_0 , v })} \, dv + \int_{v_0}^{\infty} v\| \omega \|_{L^2 (S_{u_0 , v })}^2 \, dv \leq C < \infty , $$
and the rest of the Ricci and Riemann curvature components are smooth and decay in $v$ with appropriate rates.

Then for $U-u_0$ and $C$ appropriately small, there exists a unique $(1+3)$-dimensional Lorentzian spacetime $(\mathcal{M} , g)$ foliated by null hypersurfaces $\underline{H}_v$ and $H_u$ (ingoing and outgoing respectively), that satisfies the Einstein vacuum equations $R_{\mu \nu} (g) = 0$ for $u_0 \leq u < U$ and $v_0 \leq v < \infty$. Moreover, the spacetime cannot be extended past $u=U$ in the sense that the Christoffel symbols fail to be in $L^2_u$ in any neighbourhood of $u=U$. 

\end{theorem}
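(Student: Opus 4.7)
The strategy is to combine Luk's renormalized-energy framework for the weak null singularity with $r$-weighted ($v^p$-weighted) estimates for the semi-global direction. I would set up a double null foliation $(u,v)$ with coordinates adapted to the two initial hypersurfaces, write the metric as $g=-2\Omega^2(du\otimes dv+dv\otimes du)+\sg_{AB}d\theta^A d\theta^B$, and decompose the Einstein vacuum equations into the null structure equations for the Ricci coefficients $\{\chi,\underline{\chi},\eta,\underline{\eta},\omega,\underline{\omega}\}$ and the Bianchi equations for the null curvature components $\{\alpha,\beta,\rho,\sigma,\underline{\beta},\underline{\alpha}\}$. The data are then regularized: for small $\epsilon>0$ I would smooth $\hat{\underline{\chi}}$ on $\underline{H}_{v_0}$ away from $u=U$ and truncate the outgoing data at some large $v=V<\infty$, producing smooth local solutions to which a bootstrap uniform in $\epsilon$ and $V$ can be applied; passing to the limit via weak-$\ast$ compactness then yields the desired spacetime.

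The critical analytical choice is the set of norms. In the $u$-direction I would use scale-invariant norms adapted to $\hat{\underline{\chi}}\in L^1_u$ along the lines of Luk, while in the $v$-direction I would use $v^p$-weighted $L^2$-based norms on spheres whose powers are calibrated to match the hypotheses on $\hat{\chi}$ and $\omega$, in particular encoding the scale-invariant nature of the smallness quantity $C$. Because the most singular curvature components (most notably $\alpha$ and $\beta$) fail to be in $L^2$ at the singular hypersurface, I would not estimate them directly but replace them by renormalized quantities $\widetilde{\alpha}$, $\widetilde{\beta}$ obtained by subtracting suitable bilinear combinations of Ricci coefficients; these satisfy modified Bianchi systems whose principal parts still admit divergence-type energy identities but whose source terms no longer involve the worst components. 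The bootstrap then proceeds in the usual schematic: assume the target estimates on curvature and Ricci coefficients; integrate null transport equations for the Ricci coefficients (the borderline ones, $\hat{\underline{\chi}}$ and $\underline{\omega}$, being controlled only in $L^1_u$-based norms); run weighted energy estimates for the renormalized curvature using Bel--Robinson-type multipliers of the form $e_4$ and $v^p e_4$; and finally recover the bootstrap assumptions with strict improvement using smallness of $U-u_0$ and $C$.

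The main obstacle is the simultaneous closure of the renormalized-energy estimates and the $r$-weighted estimates, since these two mechanisms interact in a nontrivial way: the nonlinear error terms produced by renormalization pair the only-$L^1_u$ quantity $\hat{\underline{\chi}}$ with the only-weakly-decaying $\hat{\chi}$ and $\omega$, and the resulting borderline products must be absorbed via Cauchy--Schwarz together with the exact scale-invariant structure of the norms, without incurring logarithmic losses at either endpoint $u\to U$ or $v\to\infty$. Once the uniform estimates are in place, uniqueness in the class determined by these norms follows from an energy-type argument comparing two solutions with the same data. Finally, the nonextendibility past $u=U$ is obtained by contradiction: if the spacetime admitted an extension in which the Christoffel symbols were in $L^2_u$ on a neighbourhood of $u=U$, then the trace of $\hat{\underline{\chi}}$ on $\underline{H}_{v_0}$ would belong to $L^2_u$, contradicting the assumption that $\hat{\underline{\chi}}\notin L^p_u$ for any $p>1$.
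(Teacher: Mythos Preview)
Your overall architecture is right---bootstrap, Luk's weight $\mathrm{w}(u)$ in the $u$-direction combined with $v^p$-weights in the outgoing direction, regularization and limit---and the nonextendibility argument is fine. But the core analytic choice you describe is off, and it matters.

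First, you misidentify the singular curvature. The weak null singularity sits at $u=U$, so the component that fails to be in $L^2$ is $\underline{\alpha}$ (it contains $\snabla_3\hat{\underline\chi}$), not $\alpha$ or $\beta$. Your proposal to ``renormalize $\alpha$ and $\beta$ by subtracting bilinear combinations of Ricci coefficients'' has no natural realization: there is no analogue of the Gauss equation that lets you replace $\alpha$ by something better, and $\beta$ is already perfectly good. What the paper actually does is \emph{avoid $\underline\alpha$ and $\alpha$ entirely} and replace the pair $(\rho,\sigma)$ by $(K-\tfrac{1}{v^2},\check\sigma)$, where $K$ is the Gaussian curvature of $S_{u,v}$ and $\check\sigma=\sigma+\tfrac12\hat{\underline\chi}\wedge\hat\chi$. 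The energy estimates then pair $\snabla_3\beta$ with $\snabla_4(K-\tfrac{1}{v^2}),\snabla_4\check\sigma$, and $\snabla_4\underline\beta$ with $\snabla_3(K-\tfrac{1}{v^2}),\snabla_3\check\sigma$. This is the renormalization that makes the scheme close; your $\widetilde\alpha,\widetilde\beta$ would not.

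Second, two smaller points: in the paper's gauge $\underline\omega\equiv 0$, so there is nothing to estimate there; and the estimates are not Bel--Robinson-type but direct $L^2$ identities on the first-order Bianchi system (integration by parts pairing $\snabla_3$ and $\snabla_4$ equations). More substantively, the paper highlights that using $K$ forces the \emph{optimal} $v$-decay on $\eta,\underline\eta$ (and on $\slashed{\mathrm{tr}}\chi-\tfrac{2}{v}$, $\slashed{\mathrm{tr}}\underline\chi+\tfrac{2}{v}$), while the scale-invariant $L^1_v$/$L^2_v$ norm is reserved for $\hat\chi$ and $\omega$; and that $\hat\chi$ is propagated via its $\snabla_3$ equation (then integrated in $v$ with the scale-invariant weight) rather than via $\snabla_4$. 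These specific allocations of decay are what prevent the borderline $\hat{\underline\chi}\cdot\hat\chi$ interactions from producing logarithmic losses, and your proposal does not yet isolate them.
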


\begin{figure}[H]
\centering
\includegraphics[width=7cm]{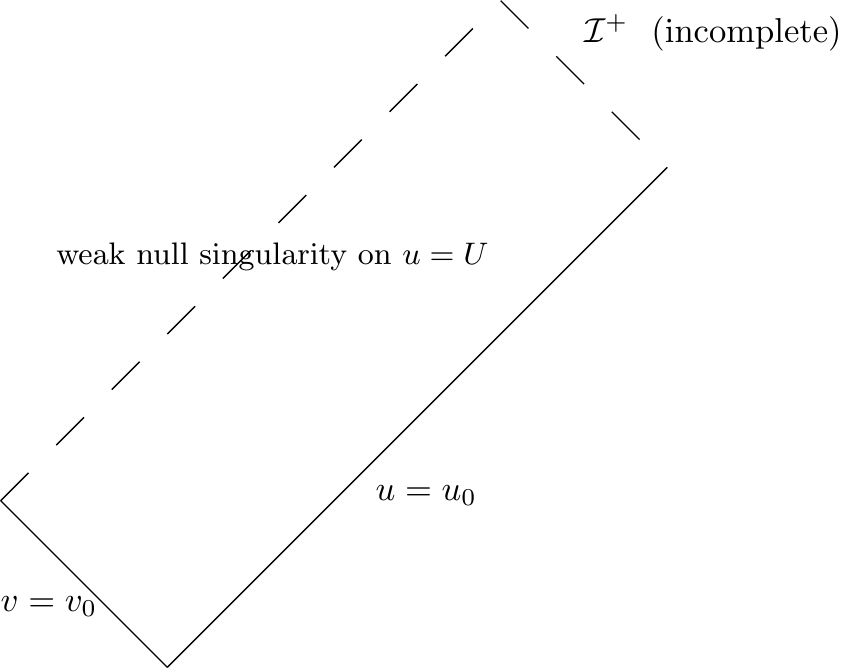}
\caption{The spacetime constructed in Theorem \ref{thm:main} that contains a weak null singularity on $u=U$.}
\end{figure}

The second result can be considered as a semi-global version of the local construction of impulsive gravitational wave spacetimes of \cite{iwaves1}, or of the local construction of colliding impulsive gravitational wave spacetimes of \cite{iwaves2}. 

\begin{theorem}
Assume that we are given data on two intersecting null hypersurfaces $\underline{H}_{v_0}$ and $H_{u_0}$ that intersect on a topological 2-sphere $S_{u_0 , v_0}$ such that:

i) on $\underline{H}_{v_0}$ where $u \in [u_0 , U)$ for some $U < \infty$, we assume that $\hat{\underline{\chi}}$ has a jump discontinuity across a topological 2-sphere $S_{u_j , v_0}$ for some $u_0 < u_j < U$, while the rest of the Ricci and Riemann curvature components are smooth,

ii) on $H_{u_0}$ where $v \in [v_0 , \infty )$, we assume that the data are smooth and satisfy certain decay estimates in $v$ with respect to the $L^2 (S_{u_0 , v} )$ norm.

Then for $U-u_0$ appropriately small, there exists a unique $(1+3)$-dimensional Lorentzian spacetime $(\mathcal{M} , g)$ foliated by null hypersurfaces $\underline{H}_v$ and $H_u$ (ingoing and outgoing respectively), that satisfies the Einstein vacuum equations $R_{\mu \nu} (g) = 0$ for $u_0 \leq u < U$ and $v_0 \leq v < \infty$. Moreover, the jump discontinuity for $\hat{\underline{\chi}}$ gets propagated along $H_{u_j}$ (the outgoing null hypersurface emanating from $S_{u_j , v_0}$) and the curvature components $\underline{\alpha}_{AB} = R \left( \frac{\partial}{\partial \theta^A} , \frac{1}{\Omega^2} \frac{\partial}{\partial u} ,  \frac{\partial}{\partial \theta^B} , \frac{1}{\Omega^2} \frac{\partial}{\partial u} \right)$, $A, B \in \{1,2\}$, where $\Omega (u,v)$ is a function to be specified later and $(\theta^1 , \theta^2)$ are coordinates on the 2-spheres $S_{u,v}$, are measures supported on $H_{u_j}$, while the spacetime is smooth outside $H_{u_j}$.  

\end{theorem}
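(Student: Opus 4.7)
The plan is to construct the spacetime in two pieces, to the past and future of the characteristic hypersurface $H_{u_j}$ along which the jump discontinuity is to be propagated, and in each piece to reduce the problem to a smooth semi-global characteristic initial value problem of the type already treated in Theorem 1.

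First I would set up the two subproblems. In the region $\mathcal{D}_1 := \{(u,v) : u_0 \leq u \leq u_j, \; v_0 \leq v < \infty\}$ the initial data consist of the restriction of the smooth data on $H_{u_0}$ together with the restriction of the data on $\underline{H}_{v_0}$ to $u \in [u_0 , u_j]$, which is smooth since the jump of $\hat{\underline{\chi}}$ is located precisely at $u = u_j$. These data are \emph{a fortiori} admissible for Theorem 1 (smooth data trivially satisfy the $L^1_u$/BV hypotheses there), and so Theorem 1 furnishes a unique smooth solution $(\mathcal{M}_1 , g_1)$ on $\mathcal{D}_1$ together with semi-global $v$-decay estimates for the Ricci and curvature components. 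I would then use the restriction of $g_1$ to $H_{u_j}$, together with the right-limit of the prescribed data on $\underline{H}_{v_0} \cap \{u_j \leq u < U \}$, as smooth characteristic data for the second region $\mathcal{D}_2 := \{(u,v) : u_j \leq u < U , \; v_0 \leq v < \infty \}$. A second application of Theorem 1 yields a smooth solution $(\mathcal{M}_2 , g_2)$ on $\mathcal{D}_2$, and gluing $\mathcal{M}_1$ and $\mathcal{M}_2$ along $H_{u_j}$ produces the desired spacetime $(\mathcal{M} , g)$. By construction $g$ is smooth off $H_{u_j}$.

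Second, I would identify the singular behaviour of $\underline{\alpha}$ on $H_{u_j}$. The Ricci coefficient $\hat{\underline{\chi}}$ is smooth on each side of $H_{u_j}$ but is prescribed to have a nontrivial jump on $\underline{H}_{v_0}$ at $u = u_j$; the $\nabla_4$-transport equation for $\hat{\underline{\chi}}$, whose right-hand side does not contain $\underline{\alpha}$ and is bounded on each side of $H_{u_j}$, propagates this jump along $H_{u_j}$ up to $v = \infty$. Using the schematic identity of the form $\nabla_3 \hat{\underline{\chi}} + \tr \underline{\chi} \cdot \hat{\underline{\chi}} + 2 \underline{\omega} \hat{\underline{\chi}} = - \underline{\alpha}$, which holds smoothly in the interior of each piece and hence in the sense of distributions across $H_{u_j}$, the $u$-derivative of the jump produces a delta-function contribution proportional to $[\hat{\underline{\chi}}](v, \theta) \cdot \delta (u - u_j)$, while the remaining terms are locally bounded functions. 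This identifies $\underline{\alpha}$ as a measure supported on $H_{u_j}$, and combined with the smoothness of $g$ off $H_{u_j}$ shows that all other curvature components are smooth away from $H_{u_j}$.

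\textbf{The main obstacle} is the semi-global transmission of estimates to the intermediate hypersurface $H_{u_j}$: for the two-step reduction to work, the $v$-decay produced by the first application of Theorem 1 on $\mathcal{D}_1$, once restricted to $H_{u_j}$, must still satisfy the $L^2(S_{u_j , v})$-type decay that Theorem 1 requires on its outgoing null hypersurface in order to close the estimates in $\mathcal{D}_2$. Verifying this amounts to checking that the scale-invariant norms propagated by Theorem 1 are preserved under restriction to arbitrary $u$-slices, which is a hierarchy/bootstrap point rather than a conceptual one, but it is the place where the stronger pointwise decay assumption of Theorem 2 (as opposed to the rougher BV-type hypothesis of Theorem 1) is genuinely used, and where uniqueness for the first problem must be invoked to guarantee that the two subconstructions actually agree across $H_{u_j}$.
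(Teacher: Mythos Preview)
Your splitting-and-gluing strategy is \emph{different} from the paper's proof. The paper does not decompose the domain at $H_{u_j}$; it runs a single bootstrap on the entire region $\{u_0 \leq u < U,\ v_0 \leq v < \infty\}$ using weighted norms $\|\widetilde{\mathrm{w}}(u)\, v^{\delta-1}(v\slashed{\nabla})^i \hat{\underline{\chi}}\|_{L^2_u L^\infty_v L^2(S)}$ that are insensitive to a jump in $\hat{\underline{\chi}}$. The whole point of the paper (following \cite{iwaves2}) is to close estimates that tolerate the singularity directly, and then to read off the propagation of the jump and the measure structure of $\underline{\alpha}$ a posteriori by quoting \cite{iwaves1}. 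Your approach is closer in spirit to \cite{iwaves1} for a single wave; it is more elementary, but it does not generalise to the interacting-wave situation advertised after the informal theorem (there is nowhere to split), nor to the weak-null case of Theorem~1 where the singularity sits on the boundary.

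There are also genuine gaps in your reduction. First, you invoke ``Theorem~1'' as a black box for smooth semi-global existence, but Theorem~1 only returns control of four angular derivatives in weighted $L^2$; it does not give you a \emph{smooth} solution on either $\mathcal{D}_1$ or $\mathcal{D}_2$, so the claim ``smooth outside $H_{u_j}$'' is not yet justified. You would need a propagation-of-higher-regularity argument (applying the estimates at every order), which is routine but must be stated. Second, and more seriously, gluing two smooth solutions along $H_{u_j}$ does not automatically yield a solution of $R_{\mu\nu}=0$ across the hypersurface: with a jump in $\partial_u\gamma$ the curvature has distributional pieces, and you must verify that none of them enter the Ricci tensor (equivalently, that the junction conditions hold). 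The paper sidesteps this by constructing the solution as a limit of smooth solutions in norms that are uniform across the jump, which automatically gives a weak solution. Third, your ``main obstacle'' paragraph conflates the hypotheses of Theorem~1 and Theorem~2: Theorem~1 outputs $L^2_v$-type control of $\hat{\chi}$ with $v^{-1/2}$ weights, not the $L^\infty_v$ pointwise $v^{\delta}$ decay of Theorem~2, so a second application on $\mathcal{D}_2$ would need the semi-global theorem in the form of Theorem~2, not Theorem~1. You acknowledge this but do not resolve it.
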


Note that we could also place a jump discontinuity on a hypersurface $\underline{H}_{v_j}$ for some $v_0 < v_j < \infty$ such that $v_j - v_0$ is appropriately small, and apply the main result of \cite{iwaves2}, and then extend the spacetime all the way to future null infinity with the assumptions of Theorem \ref{thm:main2} on $H_{u_0}$, hence constructing a semi-global spacetime that contains the \textit{interaction} of two impulsive gravitational waves.

\begin{figure}[H]
\centering
\includegraphics[width=10cm]{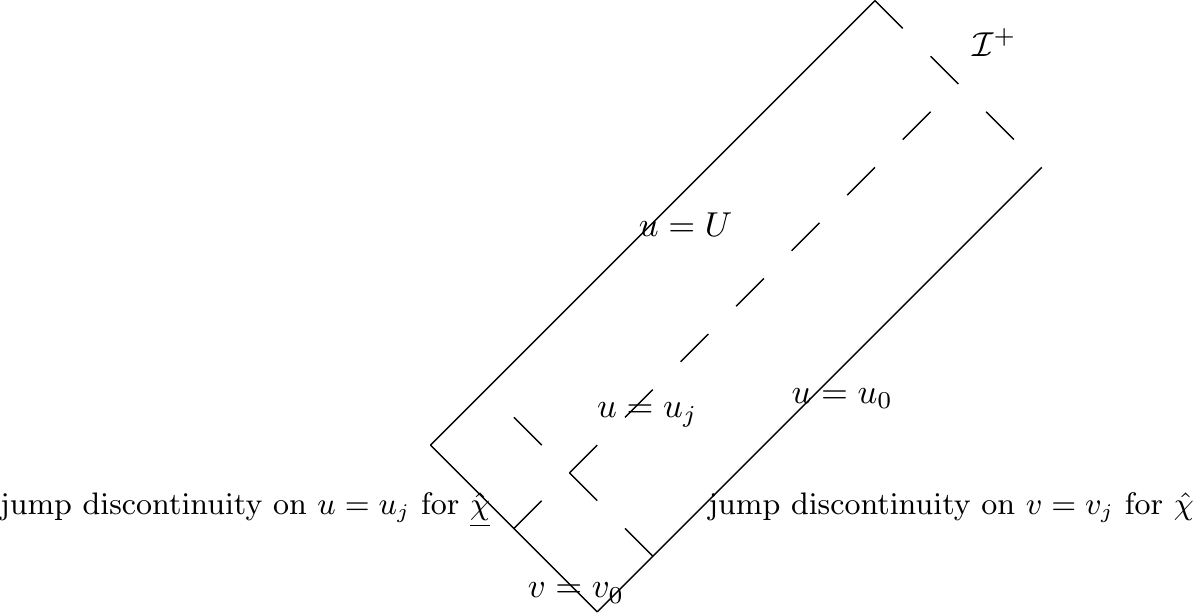}
\caption{The spacetime construced in Theorem \ref{thm:main2} in the special case that it contains the interaction of two impulsive gravitational waves, where one of them gets propagated all the way up to future null infinity.}
\end{figure}

\subsection{Historical remarks}
We give here a brief overview (that is anything but complete) of the previous work done concerning the construction of spacetimes containing curvature singularities.

We start with impulsive gravitational waves. The starting point for their study was the work of Brinkmann \cite{brinkmann} who introduced of the $pp$-wave spacetimes (that contained no curvature singularities though). Penrose in \cite{penrose} was the first to construct an impulsive gravitational wave spacetime, where the Riemann curvature tensor contained a delta singularity across a plane null hypersurface. On the other hand that spacetime is plane symmetric, non-globally hyperbolic, and non-asymptotically. The next important contribution in the area came from Khan and Penrose \cite{khanpenrose} who gave an explicit example of a spacetime representing the \textit{collision} of two impulsive gravitational waves. The metric of that spacetime has the following form:
$$ g = -2dudv +  dx^2 + dy^2 \mbox{  for $u \leq 0$, $v \leq 0$,} $$
$$ g = -2dudv + (1-v)^2 dx^2 + (1+v)^2 dy^2 \mbox{  for $u < 0$, $v > 0$,} $$
$$ g = -2dudv + (1-u)^2 dx^2 + (1+u)^2 dy^2 \mbox{  for $u >0$, $v < 0$,} $$
\begin{align*}
 g =& - \frac{2 (1-u^2 -v^2 )^{3/2}}{\sqrt{(1-u^2 ) (1-v^2 )}( uv + \sqrt{(1-u^2 ) (1-v^2 )})} dudv \\ & + (1-u^2 -v^2 ) \left( \frac{1-u \sqrt{1-v^2} - v \sqrt{1-u^2}}{  1+u \sqrt{1-v^2} + v \sqrt{1-u^2}} dx^2 +\frac{1+u \sqrt{1-v^2} + v \sqrt{1-u^2}}{  1-u \sqrt{1-v^2} - v \sqrt{1-u^2}} dy^2 \right) \mbox{  for $u , v > 0$}, 
 \end{align*}
and as the curvature blows up towards the spacelike hypersurface $u^2 + v^2 = 1$, the spacetime is neither globally nor even semi-globally geodesically complete. Further examples of colliding impulsive gravitational wave spacetimes were given by Szekeres \cite{szekeres}, Nutku-Halil \cite{nutkuhalil}, and Hauser-Ernst \cite{hauserernst}. We should also mention the work of Yurtsever \cite{yurtsever} who studied the interaction of ``almost plane waves", following the interaction for large but finite $v$ (in the coordinates described in the Theorems above). On the other hand Bell and Szekeres \cite{bellszekeres} constructed spacetimes of colliding electromagnetic waves (these can be seen as impulsive gravitational waves for the Einstein--Maxwell equations). The metric of these spacetimes is similar to the one of Khan and Penrose in the vacuum case, in the regions $\{ u,v \leq 0\}$, $\{ u < 0 , v > 0 \}$, $\{ u > 0 , v < 0 \}$, while for $\{ u , v > 0 \}$ there exists only a coordinate singularity beyond which the spacetime is extendible. Perturbations of such spacetimes have been studied by Chandrasekhar and Xanthopoulos, see \cite{chandxanth}. For a complete list of examples see the book \cite{griffiths}. Other examples of colliding impulsive gravitational wave spacetimes were given by Ferrari and Ib\'{a}\~{n}ez \cite{ferrariibanez1}, \cite{ferrariibanez2} and by Chandrasekhar and Xanthopoulos in \cite{chandxanth86}.

\begin{figure}[H]
\centering
\includegraphics[width=8cm]{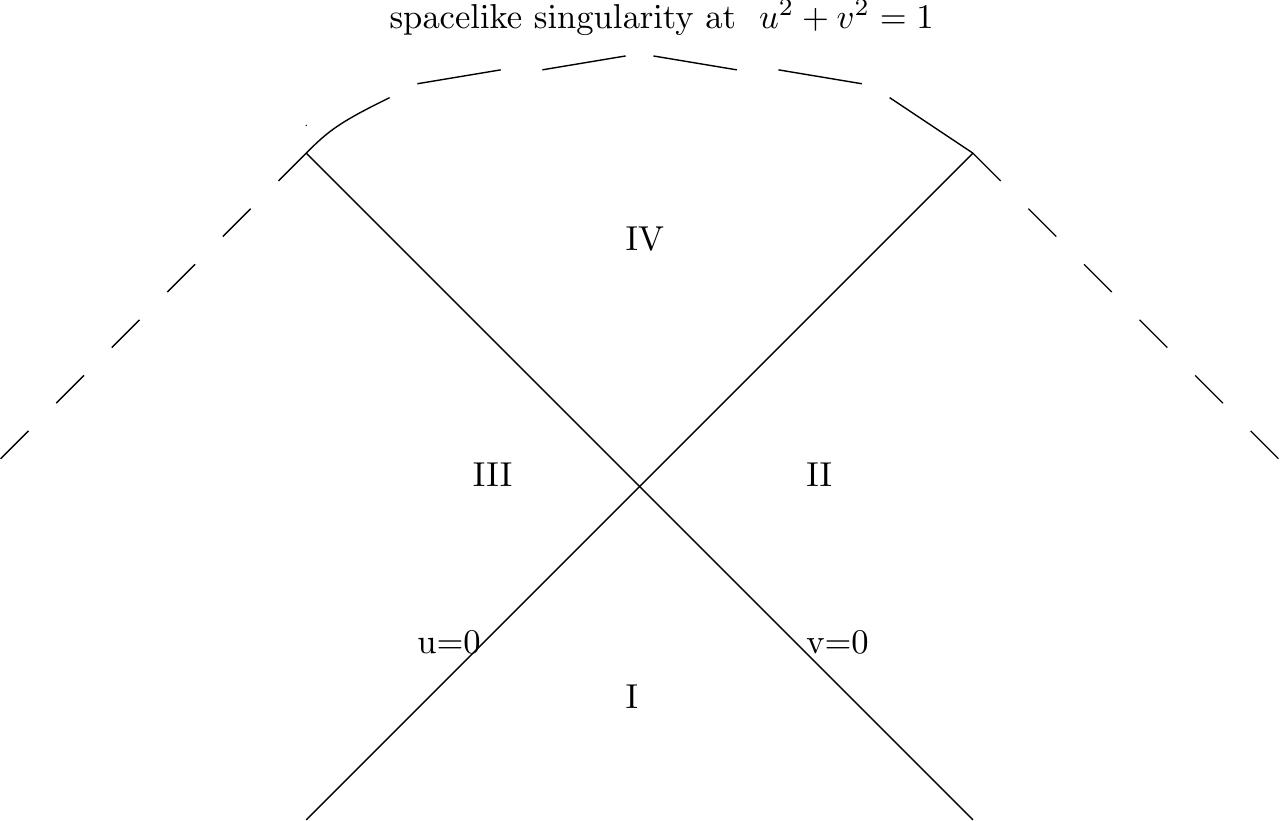}
\caption{The Khan--Penrose example where the hypersurface $u^2 + v^2 = 1$ is a spacelike singularity.}
\end{figure}

In all of the aforementioned examples the constructions of the impulsive gravitational wave spacetimes was done explicitly by assuming that the spacetime has certain symmetries. The first constructions of impulsive gravitational wave spacetimes \textit{without any symmetry assumptions} were done by Luk and Rodnianski in the groundbreaking works \cite{iwaves1} and \cite{iwaves2}. They studied the characteristic initial value problem for the Einstein vacuum equations in the double null foliation gauge, where the spacetime is foliated by null hypersurfaces intersecting on topological 2-spheres, imposing jump discontinuities for the second fundamental forms of the spheres. They showed the local existence and uniqueness of such solutions and they were able to follow the solution \textit{past} the interaction of the impulsive gravitational waves (as an aside we note that in \cite{iwaves2} they also obtained an alternative proof of Christodoulou's formation of trapped surfaces result \cite{DC09}). Recently they were also able to construct spacetimes modelling the interaction of null dust shells \cite{jonathanigor-null} using the results of \cite{iwaves1} and \cite{iwaves2}.

\begin{figure}[H]
\centering
\includegraphics[width=8cm]{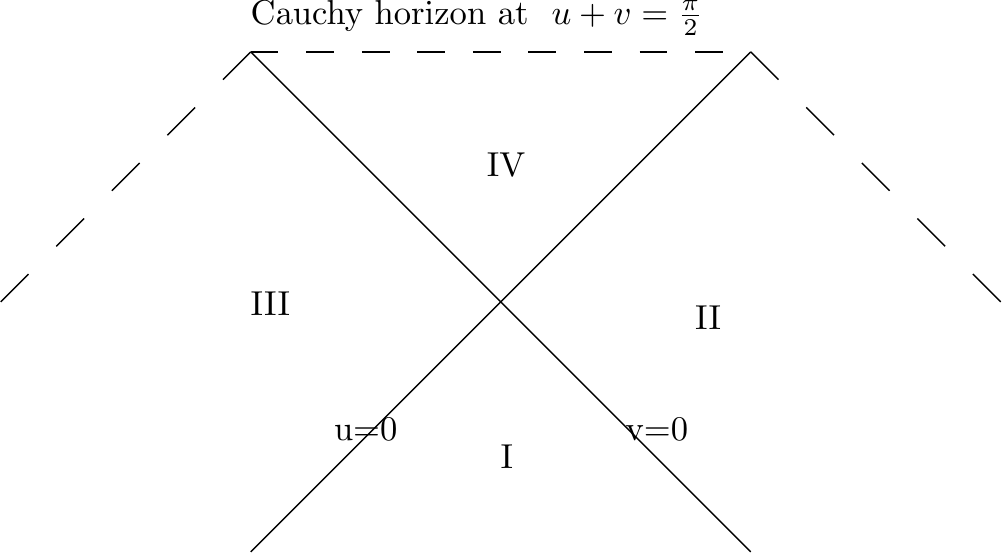}
\caption{The Bell-Szekeres example (a solution of the electrovacuum equations) where the hypersurfaces $u=0$ and $v=0$ contain singularities for the Maxwell components and where the hypersurface $u+v = \pi/2$ is a Cauchy horizon through which there is non-unique way to extend the spacetime.}
\end{figure}

On the other hand the study of weak null singularities -- which are null boundaries of Lorentzian spacetimes satisfying the Einstein vacuum equations where the metric is continuous but the Christoffel symbols blow up and are not in $L^2$ -- was initiated within the context of the Strong Cosmic Censorship conjecture (see the work \cite{dafermosluk} for a detailed version and a detailed history of this important problem of general relativity). The first construction of weak null singularities were given by Dafermos in the works \cite{MD05c} and \cite{MD03} within the context of spherically symmetric spacetimes satisfying the Einstein--Maxwell--scalar field equations. On the other hand, outside any symmetry assumptions, but in the class of analytic spacetimes, the first such constructions were given by Ori and Flanagan \cite{oriflanagan}. Without any symmetry assumptions and outside the class of analytic spacetimes, the first constructions were given in the important work of Luk \cite{weaknull} using several of the techniques of \cite{iwaves2}. To achieve such constructions he introduced a new class of weighted norms (that we also use in the current work). It's worth noting that Luk's construction is essentially a local one although it is intextendible in the sense that the Christoffel symbols blow up past its boundaries.

All the aforementioned results that are not given by explicit solutions are local constructions (with the exception of the ones of Dafermos). A natural problem is to try and extend these local constructions to global ones. In general relativity, due to the supercritical nature of the problem, a global construction of a geodesically complete spacetime is in principle a hard problem. The most famous example of a global construction is the monumental work of Christodoulou and Klainerman \cite{christab} that addresses and completely resolves the problem of the stability of the Minkowski spacetime (see also \cite{klainermannicolo}), which is essentially a small data problem and hence does not rely on the use of any conserved quantities. More recently there has been some progress on the stability of the Kerr black hole spacetimes (see \cite{klainerman17}, \cite{Dafermos2016}), which are also global constructions close to a known family of solutions. An easier problem is to construct a semi-global spacetime, i.e. one where the outgoing variable $v$ reaches infinity, hence constructing also part of future null infinity. In this area, let us mention the work of Li and Zhu \cite{lizhu}, which can be seen as a semi-global version of the construction given in \cite{lukimrn}. 

When a spacetime contains a curvature singularity, two natural questions are: a) where is this singularity supported, and b) if it creates other singularities in the spacetime. The topic of propagation of singularities for hyperbolic equations has a long history and we refer the reader to the works \cite{bony}, \cite{metivier}, \cite{majda}, \cite{melrose1}, \cite{melrose2}, and \cite{rauch}, for many different examples on local and global propagation of singularities (see also the book of Beals \cite{beals}). 

\subsection{Motivation}

Three main motivations for the present work are the following:

i) It is a natural question to extend the local constructions of impulsive gravitational wave spacetimes and of spacetimes containing weak null singularities to semi-global ones, constructing a spacetime that is geodesically complete in some directions, along with a portion of its null infinity. Note that in the case of Theorem \ref{thm:main} we are able to construct a spacetime whose null infinity is incomplete (our construction though is not related to the recent important work of Rodnianksi and Shlapentokh--Rothman \cite{igoryakov1}, \cite{igoryakov2} and has no connection to the Weak Cosmic Censorship conjecture).

ii) Whether the singularity creates other (possibly catastrophic) singularities for finite $v$. This is answered in the negative in our situation. The results of the main two Theorems should also be contrasted with the results of Penrose \cite{penrose65} on sandwich waves, where he notes that a spacetime that contains an impulsive gravitational wave supported on a plane cannot be globally hyperbolic, and the construction of an expanding impulsive gravitational wave from Penrose in \cite{penrose}, where two copies of Minkowski are glued on the two sides of a null cone (something that can be seen as a spacetime containing an impulsive gravitational wave supported on a 2-sphere) and the resulting spacetime contains topological singularities. Note also that in the case that we impose an initial jump over a 2-sphere for $\hat{\underline{\chi}}$ the size of the jump for all $v$ remains controlled by the initial jump. This is in contrast to the example of Rauch \cite{rauch} which imposes a jump over a plane in the data for a linear first order hyperbolic system and shows that the size of the jump grows towards the future. 

\begin{figure}[H]
\centering
\includegraphics[width=6cm]{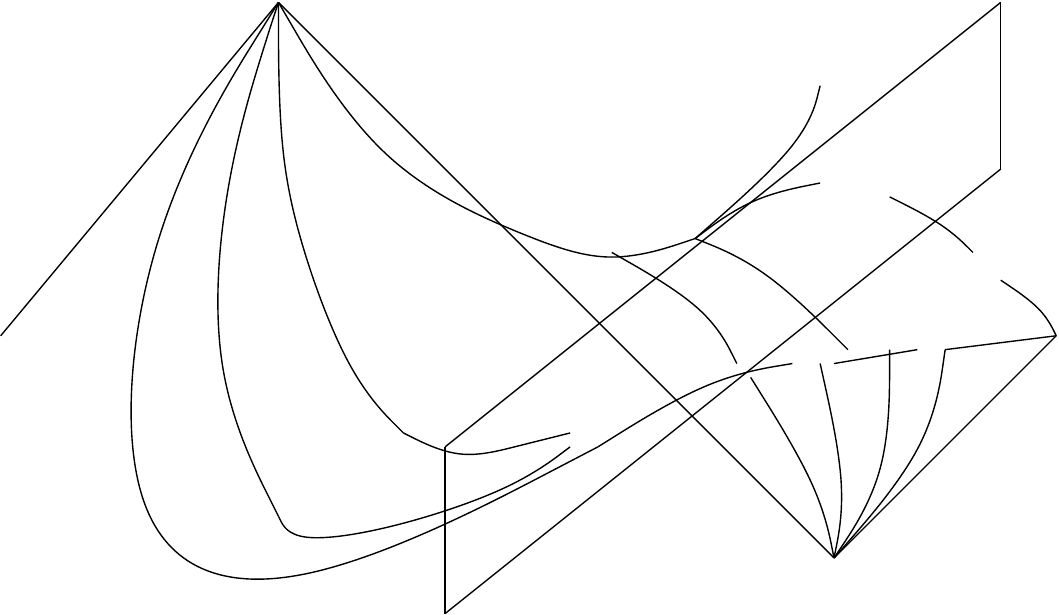}
\caption{The focusing property in the sandwich wave example of Penrose.}
\end{figure}

iii) Whether a possibly catastrophic singularity is created for finite $v$ due to the interaction of the initial singularities. This is also answered in the negative in our situation, where the singularity is initially supported and propagated along topological 2-spheres. Note that this is not the case in the Khan-Penrose example, where the singularity is propagated along topological 2-planes, and unlike in our result it creates another catastrophic curvature singularity. Whether this behaviour in the Khan-Penrose example is an artefact of plane symmetry, or it is caused by the fact that the singularity is supported on 2-dimensional surfaces of infinite area, is not known, the stability or instability of the Khan-Penrose spacetime remains an interesting and possibly hard open problem. 

iv) The construction of a global future geodesically complete spacetime from Cauchy (and not characteristic) data containing colliding impulsive gravitational waves (ongoing work). The semi-global problem studied here will be part of such a construction.

\subsection{Outline of the paper}

\quad \quad In section \ref{dnf} we introduce the double null foliation gauge and some notational conventions. In the next section \ref{ccc} we introduce the Ricci and Riemann curvature components through which we study the Einstein vacuum equations that we state in section \ref{eq}, which are the equations satisfied by the aforementioned components. 

In section \ref{dn} we define the norms that we employ throughout the paper, and which we use to precisely state and prove our main results. These results are stated in detail in section \ref{mainthm} along with an outline of their proofs.

In section \ref{cid} we solve the constraint equations in the setting of the characteristic initial value problem satisfying the assumptions of the two main Theorems. In section \ref{bi} we state and prove some basic inequalities that hold true under the assumptions of the two main Theorems.

In sections \ref{p1} and \ref{p2} we prove Theorems \ref{thm:main} and \ref{thm:main2} respectively, and finally in section \ref{aside} we state some variations of the two main Theorems and sketch their proofs.

\section{Acknowledgements}
I am deeply indebted to Jonathan Luk for suggesting the problem and for having several discussions about it.

\section{Double null foliation and coordinate system}\label{dnf}

Let $( \mathcal{M} , g )$ be the manifold with its associated metric that we are studying, where $\mathcal{M} = \mathcal{M}_2 \times \mathbb{S}^2$ for $\mathcal{M}_2$ a connected open subset of $\mathbb{R}^2$. Let $(u,v)$ be the standard Euclidean coordinates on $\mathcal{M}_2$ and $(\theta^1 , \theta^2 )$ be local coordinates on $\mathbb{S}^2$, so in the end we consider $(u,v,\theta^1 , \theta^2 )$ as local coordinates on $\mathcal{M}$. We define $\gamma$ to be a Riemannian metric on $\mathbb{S}^2$, $b$ a vector field on $\mathbb{S}^2$, and $\Omega$ a positive function defined on $\mathcal{M}_2$. With respect to the aforementioned quantities, the metric $g$ is defined as:
\begin{equation}\label{eq:metric}
g = -2\Omega^2 ( du \otimes dv+ dv \otimes du ) + \gamma_{AB} ( d\theta^A - b^A dv ) \otimes ( d\theta^B - b^B dv ) ,
\end{equation}
and is a Lorentzian metric on $\mathcal{M}$, hence with the above choices $(\mathcal{M} , g)$ is a smooth Lorentzian 4-manifold. Let $D$ denote the associated Levi-Civita connection,and we define the vector fields
$$ L = -2\Omega^2 Du \Rightarrow \quad L = \frac{\partial}{\partial v} + b^A \frac{\partial}{\partial \theta^A} , \quad \underline{L} = -2\Omega^2 Dv \Rightarrow \quad \underline{L} = \frac{\partial}{\partial u} , $$
which have the properties that
$$ Lv = \underline{L} u = 1 \mbox{  and  } \underline{L}v = Lu = 0 .$$
Since the metric $g$ has the form \eqref{eq:metric} we have that
\begingroup
\allowdisplaybreaks
\begin{align*}
& g \left( \frac{\partial}{\partial u} , \frac{\partial}{\partial u} \right) = 0 , \quad g \left( \frac{\partial}{\partial v} , \frac{\partial}{\partial v} \right) = \gamma_{AB} b^A b^B , \quad g \left( \frac{\partial}{\partial u } , \frac{\partial}{\partial v} \right) = g \left( \frac{\partial}{\partial v } , \frac{\partial}{\partial u} \right) = - 2 \Omega^2 , \\ & g \left( \frac{\partial}{\partial \theta^A} , \frac{\partial}{\partial \theta^B} \right) = g \left( \frac{\partial}{\partial \theta^B} , \frac{\partial}{\partial \theta^A} \right) = \gamma_{AB} = \gamma_{BA} , \quad g \left( \frac{\partial}{\partial u} , \frac{\partial}{\partial \theta^A} \right) = g \left( \frac{\partial}{\partial \theta^A} , \frac{\partial}{\partial u} \right) = 0, \\ & g \left( \frac{\partial}{\partial v} , \frac{\partial}{\partial \theta^A} \right) = g \left( \frac{\partial}{\partial \theta^A} , \frac{\partial}{\partial v} \right) = - \gamma_{AB} b^B , \quad g ( L , \underline{L} ) = -2 \Omega^2 , \mbox{  for $A , B \in \{1,2\}$.}
 \end{align*}
 \endgroup
The inverse metric $g^{-1}$ takes the form
$$ g^{-1} = - \frac{1}{2\Omega^2} ( L \otimes \underline{L} + \underline{L} \otimes L ) + \gamma^{-1} . $$
We denote the level sets of $u$ by $H_u$ and the level sets of $v$ by $\underline{H}_{v}$. The intersections of such level sets are topological 2-spheres denoted by $S_{u, v} = H_u \cap \underline{H}_{v}$. We require that initially
$$ \frac{\partial \theta^A}{\partial u} = 0 \mbox{  for $A \in \{1,2\}$ on $\underline{H}_{v_0}$,  } $$
$$ \frac{\partial \theta^A}{\partial v} = 0 \mbox{  for $A \in \{1,2\}$ on $H_{u_0}$, } $$
$$ u=0 \mbox{  on $H_{u_0}$,  } v = 0 \mbox{  on $\underline{H}_{v_0}$ . } $$
Note that according to the above definitions it can be checked that $u$ and $v$ as functions from $\mathcal{M}_2$ satisfy the eikonal equation in $\mathcal{M}$ (see Proposition 2.1 of\cite{dafermosluk}), hence
$$ g ( Du , Du  ) = (g^{-1})^{\alpha \beta} \partial_{\alpha} u \partial_{\beta} u = 0 \quad \quad g ( Dv , Dv ) = (g^{-1})^{\alpha \beta} \partial_{\alpha} v \partial_{\beta} v = 0 , $$
and $\theta^1 , \theta^2$ as functions on the sections $S_{u,v}$ satisfy $\underline{L} \theta^A = 0$ for $A \in \{1,2\}$.

We define the following vector fields which are geodesic:
$$  L' = \frac{1}{\Omega^2} \left( \frac{\partial}{\partial v} + b^A \frac{\partial}{\partial \theta^A} \right) , \quad \underline{L}' = \frac{1}{\Omega^2} \frac{\partial}{\partial u} \Rightarrow D_{L'} L' = D_{\underline{L}'} \underline{L}' = 0 . $$
Then we define the normalized null pair of vector fields
\begin{equation}\label{vf34}
e_4 \doteq \Omega^2 \underline{L}' = \frac{\partial}{\partial v} + b^A \frac{\partial}{\partial \theta^A} , \quad e_3 \doteq  L' = \frac{1}{\Omega^2} \frac{\partial}{\partial u} .
\end{equation}
We will work with the vector fields $(e_1 , e_2 , e_3 , e_4 )$ for $e_3$ and $e_4$ defined as above \eqref{vf34} and 
$$ e_A = \frac{\partial}{\partial \theta^A} \mbox{  for $A \in \{1,2\}$.} $$
From the above relations we can then deduce that
\begingroup
\allowdisplaybreaks
\begin{align*}
& g ( e_3 , e_3 ) = 0 , \quad g ( e_4 , e_4 ) = 0 , \quad g (e_3 , e_4 ) = g (e_4 , e_3 ) = -2 , \\ & g ( e_3 , e_A ) = g ( e_A , e_3 ) = 0 , \quad g ( e_4 , e_A ) = g ( e_A , e_4 ) = 0 \mbox{  for $A \in \{1,2\}$.}
\end{align*}
\endgroup
Finally we define the following operators and operations:
$$ ( \slashed{\mathrm{div}} F )_{A_1 \dots A_k } \doteq (\gamma^{-1} )^{BC} (\slashed{\nabla}_B F_{C A_1 \dots A_k}   , $$ 
$$ ( \slashed{\mathrm{curl}} F )_{A_1 \dots A_k} \doteq \slashed{\epsilon}^{BC} \slashed{\nabla}_B F_{C A_1 \dots A_k } , $$
$$ ( \slashed{\mathrm{tr}} F )_{A_1 \dots A_{k-1}} \doteq \gamma^{BC} F_{BC A_1 \dots A_{k-1}} , $$
$$ \left( F^1 \widehat{\otimes} F^2 \right)_{AB} \doteq F^1_A F^2_B + F^1_B F^2_A - \gamma_{AB} ( F^1 \cdot F^2 ) \mbox{  for $A, B \in \{1,2\}$}, $$
$$ ( \slashed{\nabla} \widehat{\otimes} h )_{AB} \doteq \slashed{\nabla}_A h_B + \slashed{\nabla}_B h_A - \gamma_{AB} \slashed{\mathrm{div}} h \mbox{  for $A, B \in \{1,2\}$}, $$
$$ F^1 \cdot F^2 = ( \gamma^{-1} )^{AB} F^1_A F^2_B \mbox{  for $A,B \in \{1,2\}$, } $$
$$ G^1 \cdot G^2 \doteq ( \gamma^{-1} )^{AC} (\gamma^{-1} )^{BD} G^1_{AB} G^2_{CD} \mbox{  for $A,B,C,D \in \{1,2\}$, } $$
$$ ( F^1 \widehat{\otimes} F^2 )_{AB} \doteq F^1_A F^2_B + F^1_B F^2_A - \gamma_{AB} ( F^1 \cdot F^2 ) \mbox{  for $A, B \in \{1,2\}$}, $$
$$ G^1 \wedge G^2 \doteq \slashed{\epsilon}^{AB} (\gamma^{-1})^{CD} G^1_{AC} G^2_{BD} \mbox{  for $A,B,C,D \in \{1,2\}$}, $$
$$ \mbox{*} h_A \doteq \gamma_{AC} \slashed{\epsilon}^{CB} h_B \mbox{  for $A,B,C \in \{1,2\}$}, $$
$$ \mbox{*} G_{AB} = \gamma_{BD} \slashed{\epsilon}^{DC} G_{AC} \mbox{  for $A,B,C,D \in \{1,2\}$}, $$
where $F^1$, $F^2$, $h$ are 1-forms, $G^1$, $G^2$, $G$ are symmetric 2-forms, $F$ is a totally symmetric tensor, and $\slashed{\epsilon}$ is the volume form associated to $\gamma$.

\section{Connection and curvature coefficients}\label{ccc}
We will study the Einstein equations \eqref{einstein} using the double null frame $(e_3 , e_4 )$ and the orthonormal frame $(e_1 , e_2 )$ tangent to the 2-spheres $S_{u , v}$, always following the convention $A , B \in \{1,2\}$, by recasting \eqref{einstein} as a system of equations for the Ricci (or connection) coefficients
\begingroup
\allowdisplaybreaks
\begin{align*}
& \chi_{AB} \doteq g (D_{e_A} e_4 , e_B ) , \quad \underline{\chi}_{AB} \doteq g (D_{e_A} e_3 , e_B ) , \\ & \eta_A \doteq -\frac{1}{2} g (D_{e_3}  e_A , e_4 ) , \quad \underline{\eta}_A \doteq - \frac{1}{2} g ( D_{e_4} e_A , e_3 ) , \\ & \omega \doteq - \frac{1}{4} g (D_{e_4} e_3 , e_4 ) , \quad \underline{\omega} \doteq - \frac{1}{4} g ( D_{e_3} e_4 , e_3 ) , \\ & \zeta_A \doteq \frac{1}{2} g (D_{e_A} e_4 , e_3 ) ,
\end{align*}
\endgroup
and the null curvature components
\begingroup
\allowdisplaybreaks
\begin{align*}
& \alpha_{AB} \doteq R ( e_A , e_4 , e_B , e_4 ) , \quad \underline{\alpha}_{AB} \doteq R ( e_A , e_3 , e_B , e_3 ) , \\ & \beta_A \doteq \frac{1}{2} R (e_A , e_4 , e_3 , e_4 ) , \quad \underline{\beta}_A \doteq \frac{1}{2} R ( e_A , e_3 , e_3 , e_4 ) , \\ & \rho \doteq \frac{1}{4} R ( e_4 , e_3 , e_4 , e_3 ) , \quad \sigma \doteq  \frac{1}{4} \mbox{*} R ( e_4 , e_3 , e_4 , e_3 ) ,
\end{align*}
\endgroup
where $\mbox{*}R$ is the Hodge dual of the Riemann curvature tensor $R$ which is defined as
$$ \mbox{*} R_{abcd} \doteq \epsilon_{ab\lambda \nu} R^{\lambda \nu cd} .$$ 
We also define the traceless parts of $\chi$ and $\underline{\chi}$ by
$$ \hat{\chi}_{AB} \doteq \chi_{AB} - \frac{1}{2} ( \slashed{\mathrm{tr}} \chi ) \gamma_{AB} , \quad \hat{\underline{\chi}}_{AB} \doteq \chi_{AB} - \frac{1}{2} ( \slashed{\mathrm{tr}} \underline{\chi} ) \gamma_{AB} \mbox{  for $A, B \in \{1,2\}$} . $$
Note that
$$ g ( D_{e_3} e_4 , e_3 ) = - g ( D_{e_3} e_3 , e_4 ) = 0 , $$
as $e_3$ is geodesic, which implies that for our choice of frame we have that
$$ \underline{\omega} = 0 . $$
Also note that as
$$ [ e_4 , e_A ] = - \frac{\partial b^B}{\partial \theta^A} \frac{\partial}{\partial \theta^B} \mbox{  for $A \in \{1,2\}$, } $$
we also have for $A \in \{1,2\}$ that
\begingroup
\allowdisplaybreaks
\begin{align*}
\underline{\eta}_A = & - \frac{1}{2} g ( D_{e_4} e_A , e_3 ) \\ = & - \frac{1}{2} g ( D_{e_A} e_4 , e_3 ) - \frac{1}{2} g ( [e_4 , e_A ] , e_3 ) \\ = & - \frac{1}{2} g ( D_{e_A} e_4 , e_3 ) = - \zeta_A \\ & \Rightarrow \underline{\eta}_A = - \zeta_A .
\end{align*}
\endgroup
On the other hand we have that
\begingroup
\allowdisplaybreaks
\begin{align*}
\eta_A = & - \frac{1}{2} g ( D_{e_3} e_A , e_4 ) \\ = & - \frac{1}{2} g ( D_{e_A} e_3 , e_4 ) - \frac{1}{2} g ( [ e_3 , e_A ] , e_4 ) \\ = & \zeta_A + \frac{1}{2} [ e_A ( \Omega^{-2} ) ] \Omega^2 g ( e_3 , e_4 ) \\ = & - \underline{\eta}_A - [ e_A ( \Omega^{-2} ) ] \Omega^2 \\ = & - \underline{\eta}_A + 2 e_A ( \log \Omega ) \Rightarrow 
\end{align*}
\endgroup
\begin{equation}\label{eq:Ometa}
 e_A ( \log \Omega ) = \frac{1}{2} ( \eta_A + \underline{\eta}_A ) . 
\end{equation}  
Note that for $\omega$ we have that:
\begingroup
\allowdisplaybreaks
\begin{align*}
\omega = & - \frac{1}{4} g (D_{e_4} e_3 , e_4 ) = \frac{1}{4} g ( D_{e_4} e_4 , e_3 ) \\ = & \frac{1}{4} g \left( D_{e_4} ( \Omega^2 \Omega^{-2} e_4 ) , e_3 \right) \\ = & \frac{1}{4} g \left( [ e_4 ( \Omega^2 ) ] ( \Omega^{-2} e_4 ) , e_3 \right) + \frac{1}{4} g \left( \Omega^2 D_{e_4} ( \Omega^{-2} e_4 ) , e_3 \right) \\ = & \frac{1}{4} [ e_4 ( \Omega^{2} ) ] \Omega^{-2} g (e_4 , e_3 ) \\ = & - \frac{1}{2} [ e_4 ( \Omega^{-2} ) ] \Omega^{-2} =  - e_4 ( \log \Omega) \Rightarrow
\end{align*}
\endgroup
\begin{equation}\label{eq:omOm}
\omega = - e_4 ( \log \Omega ) .
\end{equation}
The last equation and equation \eqref{eq:Ometa} imply that
\begin{equation}\label{eq:upuOm}
\frac{\partial ( \log \Omega )}{\partial v} = - \omega - \frac{1}{2} b^A ( \eta_A + \underline{\eta}_A ) .
\end{equation}
We have that
$$ [ e_3 , e_4 ] = \frac{1}{\Omega^2} \frac{\partial b^A}{\partial u } \frac{\partial}{\partial \theta^A} - b^A \left[ \frac{\partial}{\partial \theta^A} \left( \frac{1}{\Omega^2} \right) \right] \frac{\partial}{\partial u} - \left[ \frac{\partial}{\partial \underline{u}} \left( \frac{1}{\Omega^2} \right) \right] \frac{\partial}{\partial u}  , $$
which implies that
$$ g ( [ e_3 , e_4 ] , e_A ) = \frac{1}{\Omega^2}\frac{\partial b^B}{\partial u} \gamma_{BA} . $$
Then we have that
\begingroup
\allowdisplaybreaks
\begin{align*}
\frac{1}{\Omega^2} \frac{\partial b^B}{\partial u} \gamma_{BA} = & g ( [ e_3 , e_4 ] , e_A ) \\ = & g ( D_{e_3} e_4 , e_A ) - g ( D_{e_4} e_3 , e_A ) \\ = & g ( D_{e_3} e_4 , e_A ) + g ( D_{e_4} e_A , e_3 ) \\ = & - g ( D_{e_3} e_A , e_4 ) - 2 \underline{\eta}_A \\ = & 2 \eta_A - 2 \underline{\eta}_A \Rightarrow
\end{align*}
\endgroup
\begin{equation}\label{eq:b}
\frac{\partial b^A}{\partial u} = \Omega^2 \gamma^{AB} ( 2 \eta_B - 2 \underline{\eta}_B ) \mbox{  for any $A \in \{1,2 \}$.}
\end{equation}
For $\gamma$ we have the following two equations:
$$ \slashed{\mathcal{L}}_L \gamma_{AB} = 2 \chi_{AB} , \quad \slashed{\mathcal{L}}_{\underline{L}} \gamma_{AB} = 2 \Omega^2 \underline{\chi}_{AB} \mbox{  for $A, B \in \{1,2\}$,} $$
where $\slashed{\mathcal{L}}_L$ and $\slashed{\mathcal{L}}_{\underline{L}}$ denote the restriction of the Lie derivatives $\mathcal{L}_L$ and $\mathcal{L}_{\underline{L}}$ respectively to $TS_{u,v}$. These last two equations imply that
\begin{equation}\label{eq:lgamma}
\left( \frac{\partial}{\partial \underline{u}} + b^C \frac{\partial}{\partial \theta^C} \right) \gamma_{AB} + \gamma_{AC} \frac{\partial b^C}{\partial \theta^B} + \gamma_{BC} \frac{\partial b^C}{\partial \theta^A} = 2 \chi_{AB} ,
\end{equation}
and
\begin{equation}\label{eq:ulgamma}
\frac{\partial \gamma_{AB}}{\partial u} = 2 \Omega^2 \underline{\chi}_{AB} .
\end{equation}

\section{The equations}\label{eq}

In the rest of the article we denote by $\slashed{\nabla}$ the Levi-Civita connection induced by $\gamma$ on the sections $S_{u , v}$, and by $\slashed{\nabla}_3$ and $\slashed{\nabla}_4$ the projections onto $S_{u , v}$ of the covariant derivatives $D_{e_3}$ and $D_{e_4}$ respectively. We now turn to the equations satisfied by the Ricci coefficients and the null curvature components under the assumption that our spacetime $(\mathcal{M} , g)$ satisfies the Einstein equations \eqref{einstein}. Since we chose our gauge so that $\underline{\omega} = 0$, the Ricci coefficients satisfy the following equations:
\begin{equation}\label{eq:uchi3}
\slashed{\nabla}_3 \hat{\underline{\chi}} + \slashed{\mathrm{tr}} \underline{\chi} \hat{\underline{\chi}} = - \underline{\alpha} ,
\end{equation}
\begin{equation}\label{eq:uchi4}
\slashed{\nabla}_4 \hat{\underline{\chi}} + \frac{1}{2} \slashed{\mathrm{tr}} \chi \hat{\underline{\chi}} = \slashed{\nabla} \hat{\otimes} \underline{\eta} + 2\omega \hat{\underline{\chi}} - \frac{1}{2} \slashed{\mathrm{tr}} \underline{\chi} \hat{\chi} + \underline{\eta} \hat{\otimes} \underline{\eta} , 
\end{equation}
\begin{equation}\label{eq:truchi3}
\slashed{\nabla}_3 \slashed{\mathrm{tr}} \underline{\chi} + \frac{1}{2} ( \slashed{\mathrm{tr}} \underline{\chi} )^2 = - | \hat{\underline{\chi}} |^2 ,
\end{equation}
\begin{equation}\label{eq:truchi4}
\slashed{\nabla}_4 \slashed{\mathrm{tr}} \underline{\chi} + \frac{1}{2} \slashed{\mathrm{tr}} \chi \slashed{\mathrm{tr}} \underline{\chi} = 2 \omega \slashed{\mathrm{tr}} \underline{\chi} + 2 \rho - \hat{\chi} \cdot \hat{\underline{\chi}} + 2 \mathrm{div} \underline{\eta} + 2 | \underline{\eta} |^2 ,
\end{equation}
\begin{equation}\label{eq:chi3}
\slashed{\nabla}_3 \hat{\chi} + \frac{1}{2} \slashed{\mathrm{tr}} \underline{\chi} \hat{\chi} = \slashed{\nabla} \hat{\otimes} \eta - \frac{1}{2} \slashed{\mathrm{tr}} \chi \hat{\underline{\chi}} + \eta \hat{\otimes} \eta , 
\end{equation}
\begin{equation}\label{eq:chi4}
\slashed{\nabla}_4 \hat{\chi} + \slashed{\mathrm{tr}} \chi \hat{\chi} = - 2 \omega \hat{\chi} - \alpha ,
\end{equation}
\begin{equation}\label{eq:trchi3}
\slashed{\nabla}_3 \slashed{\mathrm{tr}} \chi + \frac{1}{2} \slashed{\mathrm{tr}} \underline{\chi} \slashed{\mathrm{tr}} \chi = 2 \rho -\hat{\chi} \cdot \hat{\underline{\chi}} + 2 \slashed{\mathrm{div}} \eta + 2 | \eta |^2 ,
\end{equation}
\begin{equation}\label{eq:trchi4}
\slashed{\nabla}_4 \slashed{\mathrm{tr}} \chi + \frac{1}{2} ( \slashed{\mathrm{tr}} \chi )^2 = -2 \omega \slashed{\mathrm{tr}} \chi - | \hat{\chi} |^2 ,
\end{equation}
\begin{equation}\label{eq:ueta3}
\slashed{\nabla}_3 \underline{\eta} + \mathrm{tr} \underline{\chi} \underline{\eta} = - \underline{\chi} \cdot \eta  - \underline{\hat{\chi}} \cdot \underline{\eta} + \underline{\beta} ,
\end{equation}
\begin{equation}\label{eq:eta4}
\slashed{\nabla}_4 \eta  + \mathrm{tr} \chi \eta = \chi \cdot  \underline{\eta} - \hat{\chi} \cdot \eta - \beta ,
\end{equation}
\begin{equation}\label{eq:omega3}
\slashed{\nabla}_3 \omega = \zeta \cdot (\eta - \underline{\eta} ) - \eta \cdot \underline{\eta} + \frac{1}{2} \rho .
\end{equation}
Note that as $\slashed{\nabla}_3 \underline{\eta} = - \slashed{\nabla}_3 \eta$, we have that:
\begin{equation}\label{eq:eta3}
\slashed{\nabla}_3 \eta + \slashed{\mathrm{tr}} \underline{\chi} \eta = \underline{\chi} \cdot  \underline{\eta} - \underline{\hat{\chi}} \cdot \eta -  \underline{\beta} .
\end{equation}
The Ricci coefficients satisfy also the following Codazzi equations:
\begin{equation}\label{eq:coduchi}
\slashed{\mathrm{div}} \hat{\underline{\chi}} = \frac{1}{2} \slashed{\nabla} \left( \slashed{\mathrm{tr}} \underline{\chi} + \frac{2}{\underline{u}} \right) + \underline{\beta} + \frac{1}{2} ( \eta - \underline{\eta} ) \cdot \left( \hat{\underline{\chi}} - \frac{1}{2} \slashed{\mathrm{tr}} \underline{\chi} \right) ,
\end{equation}
\begin{equation}\label{eq:codchi}
\slashed{\mathrm{div}} \hat{\chi} = \frac{1}{2} \slashed{\nabla} \left( \slashed{\mathrm{tr}} \chi - \frac{2}{\underline{u}} \right) - \beta - \frac{1}{2} ( \eta - \underline{\eta} ) \cdot \left( \hat{\chi} - \frac{1}{2} \slashed{\mathrm{tr}} \chi \right) ,
\end{equation}
\begin{equation}\label{eq:codeta}
\slashed{\mathrm{curl}} \eta = - \slashed{\mathrm{curl}} \underline{\eta} = \sigma + \frac{1}{2} \hat{\underline{\chi}} \wedge \hat{\chi} .
\end{equation}
The null curvature components satisfy the following equations:
\begin{equation}\label{eq:ubeta3}
\slashed{\nabla}_3 \underline{\beta} + 2 \slashed{\mathrm{tr}} \underline{\chi} \underline{\beta} =- \slashed{\mathrm{div}} \underline{\alpha} + \underline{\eta} \cdot \underline{\alpha} , 
\end{equation}
\begin{equation}\label{eq:ubeta4}
\slashed{\nabla}_4 \underline{\beta} + \slashed{\mathrm{tr}} \chi \underline{\beta} =- \slashed{\nabla} \rho +^{*} \slashed{\nabla} \sigma + 2 \omega \underline{\beta} + 2 \hat{\underline{\chi}} \cdot \beta - 3 ( \underline{\eta} \rho -^{*} \underline{\eta} \sigma )  ,
\end{equation}
\begin{equation}\label{eq:beta3}
\slashed{\nabla}_3 \beta + \slashed{\mathrm{tr}} \underline{\chi} \beta = \slashed{\nabla} \rho +^{*} \slashed{\nabla} \sigma + 2 \hat{\chi} \cdot \underline{\beta} + 3 ( \eta \rho +^{*} \eta \sigma ) ,
\end{equation}
\begin{equation}\label{eq:beta4}
\slashed{\nabla}_4 \beta + 2 \slashed{\mathrm{tr}} \chi \beta = \slashed{\mathrm{div}} \alpha - 2 \omega \beta + \eta \alpha ,
\end{equation}
\begin{equation}\label{eq:rho3}
\slashed{\nabla}_3 \rho + \frac{3}{2} \slashed{\mathrm{tr}} \underline{\chi} \rho = - \slashed{\mathrm{div}} \underline{\beta} - \frac{1}{2} \hat{\chi} \cdot \underline{\alpha} + \zeta \cdot \underline{\beta} - 2 \eta \cdot \underline{\beta} ,  
\end{equation}
\begin{equation}\label{eq:rho4}
\slashed{\nabla}_4 \rho + \frac{3}{2} \slashed{\mathrm{tr}} \chi \rho = \slashed{\mathrm{div}} \beta - \frac{1}{2} \hat{\underline{\chi}} \cdot \alpha + \zeta \cdot \beta + 2 \underline{\eta} \cdot \beta ,
\end{equation}
\begin{equation}\label{eq:sigma3}
\slashed{\nabla}_3 \sigma + \frac{3}{2} \slashed{\mathrm{tr}} \underline{\chi} \sigma = - \slashed{\mathrm{div}}^{*} \underline{\beta} + \frac{1}{2} \hat{\chi} \cdot^{*} \underline{\alpha} - \zeta \cdot^{*} \underline{\beta} - 2 \eta \cdot^{*} \underline{\beta} ,
\end{equation}
\begin{equation}\label{eq:sigma4}
\slashed{\nabla}_4 \sigma + \frac{3}{2} \slashed{\mathrm{tr}} \chi \sigma = - \slashed{\mathrm{div}}^{*} \beta + \frac{1}{2} \hat{\underline{\chi}} \cdot^{*} \alpha - \zeta \cdot^{*} \beta - 2 \underline{\eta} \cdot^{*} \beta ,
\end{equation}
\begin{equation}\label{eq:alpha3}
\slashed{\nabla}_3 \alpha + \frac{1}{2} \slashed{\mathrm{tr}} \underline{\chi} \alpha = \slashed{\nabla} \hat{\otimes} \beta - 3 ( \hat{\chi} \rho +^{*} \hat{\chi} \sigma ) + ( \zeta + 4 \eta ) \hat{\otimes} \beta ,
\end{equation}
\begin{equation}\label{eq:ualpha4}
\slashed{\nabla}_4 \underline{\alpha} + \frac{1}{2} \slashed{\mathrm{tr}} \chi \underline{\alpha} = - \slashed{\nabla} \hat{\otimes} \underline{\beta} + 4 \omega \underline{\alpha} - 3 ( \hat{\underline{\chi}} \rho -^{*} \hat{\underline{\chi}} \sigma ) + ( \zeta - 4 \underline{\eta} ) \hat{\otimes} \underline{\beta} .
\end{equation}
Moreover we define the renormalized quantities
$$\check{\rho} = \rho - \frac{1}{2} \hat{\chi} \cdot \underline{\hat{\chi}}, \quad \check{\sigma} \doteq \sigma + \frac{1}{2} \hat{\underline{\chi}} \wedge \hat{\chi} , $$
which satisfiy the following equations
\begin{equation}\label{eq:crho3}
\slashed{\nabla}_3 \check{\rho} + \frac{3}{2} \slashed{\mathrm{tr}} \underline{\chi} \check{\rho} = - \slashed{\mathrm{div}} \underline{\beta} + ( \zeta - 2\eta ) \cdot \underline{\beta} - \frac{1}{2} \hat{\underline{\chi}} \cdot [ ( \slashed{\nabla} \hat{\otimes} \eta ) + ( \eta \hat{\otimes} \eta ) ] + \frac{1}{4} tr \chi | \hat{\underline{\chi}} |^2 ,
\end{equation}
\begin{equation}\label{eq:crho4}
\slashed{\nabla}_4 \check{\rho} + \frac{3}{2} \slashed{\mathrm{tr}} \chi \check{\rho} = \slashed{\mathrm{div}} \beta + ( \zeta + 2 \underline{\eta} ) \cdot \beta - \frac{1}{2} \hat{\chi} [ ( \slashed{\nabla} \hat{\otimes} \underline{\eta} ) + ( \underline{\eta} \hat{\otimes} \underline{\eta} ) ] + \frac{1}{4} tr \underline{\chi} | \hat{\chi} |^2 ,
\end{equation}
\begin{equation}\label{eq:csigma3}
\slashed{\nabla}_3 \check{\sigma} + \frac{3}{2} \slashed{\mathrm{tr}} \underline{\chi} \check{\sigma} = - \slashed{\mathrm{div}}^{*} \underline{\beta} + ( \zeta - 2 \eta ) \wedge \underline{\beta} + \frac{1}{2} \hat{\underline{\chi}} \wedge [ ( \slashed{\nabla} \hat{\otimes} \eta ) + ( \eta \hat{\otimes} \eta ) ] ,
\end{equation}
\begin{equation}\label{eq:csigma4}
\slashed{\nabla}_4 \check{\sigma} + \frac{3}{2} \slashed{\mathrm{tr}} \chi \check{\sigma} = - \slashed{\mathrm{div}}^{*} \beta - ( \zeta + 2 \underline{\eta} ) \wedge \beta + \frac{1}{2} [ ( \slashed{\nabla} \hat{\otimes} \underline{\eta} ) + ( \underline{\eta} \hat{\otimes} \underline{\eta} ) ] \wedge \hat{\chi} . 
\end{equation}
Finally we note that the Gaussian curvature of the surfaces $S$ has the form:
\begin{equation}\label{eq:gauss}
K = - \rho + \frac{1}{2} \hat{\chi} \cdot \hat{\underline{\chi}} - \frac{1}{4} \slashed{\mathrm{tr}} \chi \slashed{\mathrm{tr}} \underline{\chi} ,
\end{equation}
and satisfies the following equations:
\begin{equation}\label{eq:gauss3}
\slashed{\nabla}_3 K + \slashed{\mathrm{tr}} \underline{\chi} K = \slashed{\mathrm{div}} \underline{\beta} - ( \zeta - 2\eta ) \cdot \underline{\beta} + \frac{1}{2} \underline{\hat{\chi}} \cdot ( \slashed{\nabla} \widehat{\otimes} \eta + \eta \widehat{\otimes} \eta ) - \frac{1}{2} \slashed{\mathrm{tr}} \underline{\chi} \slashed{\mathrm{div}} \eta - \frac{1}{2} \slashed{\mathrm{tr}} \underline{\chi} | \eta |^2 ,
\end{equation}
\begin{equation}\label{eq:gauss4}
\slashed{\nabla}_4 K + \slashed{\mathrm{tr}} \chi K = -\slashed{\mathrm{div}} \beta - ( \zeta + 2\underline{\eta} ) \cdot \beta + \frac{1}{2} \hat{\chi} \cdot ( \slashed{\nabla} \widehat{\otimes} \underline{\eta} + \underline{\eta} \widehat{\otimes} \underline{\eta} ) - \frac{1}{2} \slashed{\mathrm{tr}} \chi \slashed{\mathrm{div}} \underline{\eta} - \frac{1}{2} \slashed{\mathrm{tr}} \chi | \underline{\eta} |^2 .
\end{equation}
In terms of $\check{\sigma}$ and the Gaussian curvature $K$ equations \eqref{eq:ubeta4} and \eqref{eq:beta3} take the form:
\begin{equation}\label{eq:ubeta44}
\begin{split}
\slashed{\nabla}_4 \underline{\beta} + \slashed{\mathrm{tr}} \chi \underline{\beta} = & \slashed{\nabla} K + \mbox{*} \slashed{\nabla} \check{\sigma} + 2 \omega \underline{\beta} + 2\hat{\underline{\chi}} \cdot \beta + 3 ( \underline{\eta} K + \mbox{*} \underline{\eta} \check{\sigma} ) - \frac{1}{2} [ \slashed{\nabla} ( \hat{\chi} \cdot \hat{\underline{\chi}} ) + \mbox{*} \slashed{\nabla} ( \hat{\chi} \wedge \hat{\underline{\chi}} ) ] \\ & + \frac{1}{4} [ ( \slashed{\nabla} \slashed{\mathrm{tr}} \chi ) \slashed{\mathrm{tr}} \underline{\chi} + \slashed{\mathrm{tr}} \chi ( \slashed{\nabla} \slashed{\mathrm{tr}} \underline{\chi} ) ] - \frac{3}{2} [ \underline{\eta} ( \hat{\chi} \cdot \hat{\underline{\chi}}) - \mbox{*} \underline{\eta} ( \hat{\chi} \wedge \hat{\underline{\chi}} ) ] + \frac{3}{4} \underline{\eta} \slashed{\mathrm{tr}} \chi \slashed{\mathrm{tr}} \underline{\chi} ,
\end{split} 
\end{equation}
\begin{equation}\label{eq:beta33}
\begin{split}
\slashed{\nabla}_3 \beta + \slashed{\mathrm{tr}} \underline{\chi} \beta = & - \slashed{\nabla} K + \mbox{*} \slashed{\nabla} \check{\sigma} + 2 \hat{\chi} \cdot \underline{\beta} - 3 ( \eta K - \mbox{*} \eta \check{\sigma} ) + \frac{1}{2} [ \slashed{\nabla} ( \hat{\chi} \cdot \hat{\underline{\chi}} ) + \mbox{*} \slashed{\nabla} ( \hat{\chi} \wedge \hat{\underline{\chi}} ) ] \\ & - \frac{1}{4}  [ ( \slashed{\nabla} \slashed{\mathrm{tr}} \chi ) \slashed{\mathrm{tr}} \underline{\chi} + \slashed{\mathrm{tr}} \chi ( \slashed{\nabla} \slashed{\mathrm{tr}} \underline{\chi} ) ] + \frac{3}{2} [ \eta ( \hat{\chi} \cdot \hat{\underline{\chi}}) + \mbox{*} \eta ( \hat{\chi} \wedge \hat{\underline{\chi}} ) ] -  \frac{3}{4} \eta \slashed{\mathrm{tr}} \chi \slashed{\mathrm{tr}} \underline{\chi} .
\end{split}
\end{equation}
It will be more convenient to work with the renormalized quantities:
$$ \slashed{\mathrm{tr}} \chi - \frac{2}{v} , \quad \slashed{\mathrm{tr}} \underline{\chi} + \frac{2}{v} , \quad K - \frac{1}{v^2} . $$
The last quantity can be expressed as:
$$ K - \frac{1}{v^2} = - \check{\rho} - \frac{1}{4} \left( \slashed{\mathrm{tr}} \chi - \frac{2}{v} \right) \left( \slashed{\mathrm{tr}} \underline{\chi} + \frac{2}{v} \right) + \frac{1}{2v} \left( \slashed{\mathrm{tr}} \chi - \frac{2}{v} \right) - \frac{1}{2v} \left( \slashed{\mathrm{tr}}  \underline{\chi} + \frac{2}{v} \right) .$$
The renormalized quantity $K - \frac{1}{v^2}$ satisfies the following equations:
\begin{equation}\label{eq:rgauss3}
\slashed{\nabla}_3 \left( K - \frac{1}{v^2} \right) + \slashed{\mathrm{tr}} \underline{\chi} \left( K - \frac{1}{v^2} \right) = \slashed{\mathrm{div}} \underline{\beta} -\frac{1}{v^2} \slashed{\mathrm{tr}} \underline{\chi}  - ( \zeta - 2\eta ) \cdot \underline{\beta} + \frac{1}{2} \underline{\hat{\chi}} \cdot ( \slashed{\nabla} \widehat{\otimes} \eta + \eta \widehat{\otimes} \eta ) - \frac{1}{2} \slashed{\mathrm{tr}} \underline{\chi} \slashed{\mathrm{div}} \eta - \frac{1}{2} \slashed{\mathrm{tr}} \underline{\chi} | \eta |^2 ,
\end{equation}
\begin{equation}\label{eq:rgauss4}
\slashed{\nabla}_4 \left( K - \frac{1}{v^2} \right) + \frac{3}{2} \slashed{\mathrm{tr}} \chi \left( K - \frac{1}{v^2} \right) = - \slashed{\mathrm{div}} \beta + \frac{1}{2} \slashed{\mathrm{tr}} \chi \left( K - \frac{1}{v^2} - \slashed{\mathrm{div}} \underline{\eta} \right) - ( \zeta + 2\underline{\eta} ) \cdot \beta + \frac{1}{2} \hat{\chi} \cdot ( \slashed{\nabla} \widehat{\otimes} \underline{\eta} + \underline{\eta} \widehat{\otimes} \underline{\eta} )- \frac{1}{2} \slashed{\mathrm{tr}} \chi | \underline{\eta} |^2 .
\end{equation}
Note that the equations (that we will use) for $\slashed{\mathrm{tr}}  \chi - \frac{2}{v}$ and $\slashed{\mathrm{tr}}  \underline{\chi} + \frac{2}{v}$ are the following:
\begin{equation}\label{eq:trchir3}
\slashed{\nabla}_3 \left( \slashed{\mathrm{tr}}  \chi - \frac{2}{v} \right) + \slashed{\mathrm{tr}}  \underline{\chi} \left( \mathrm{tr} \chi - \frac{2}{v} \right) = -2 \left( K - \frac{1}{v^2} \right) + 2 \slashed{\mathrm{div}} \eta - \frac{1}{v} \left( \slashed{\mathrm{tr}}  \underline{\chi} + \frac{2}{v} \right) + | \eta |^2 , 
\end{equation} 
\begin{equation}\label{eq:trchir4}
\slashed{\nabla}_4 \left( \slashed{\mathrm{tr}}  \chi - \frac{2}{v} \right) + \slashed{\mathrm{tr}} \chi \left( \slashed{\mathrm{tr}}  \chi - \frac{2}{v} \right) = \frac{1}{2} \left( \slashed{\mathrm{tr}}  \chi - \frac{2}{v} \right)^2 - 2 \omega \slashed{\mathrm{tr}} \chi - | \hat{\chi} |^2 , 
\end{equation}
\begin{equation}\label{eq:truchir3}
\slashed{\nabla}_3 \left( \slashed{\mathrm{tr}} \underline{\chi} + \frac{2}{v} \right) + \frac{1}{2} \slashed{\mathrm{tr}} \underline{\chi} \left( \slashed{\mathrm{tr}} \underline{\chi} + \frac{2}{v} \right) = \frac{1}{v} \slashed{\mathrm{tr}} \underline{\chi} - | \underline{\hat{\chi}} |^2 ,
\end{equation}
\begin{equation}\label{eq:truchir4}
\slashed{\nabla}_4 \left( \slashed{\mathrm{tr}} \underline{\chi} + \frac{2}{v} \right) + \slashed{\mathrm{tr}} \chi \left( \slashed{\mathrm{tr}} \underline{\chi} + \frac{2}{v} \right) = \frac{2}{v} \left( \slashed{\mathrm{tr}} \chi - \frac{2}{v} \right) - 2 \left( K - \frac{1}{v^2} \right) + 2 \slashed{\mathrm{div}} \underline{\eta} + 2 | \underline{\eta} |^2 .
\end{equation}

\section{Definition of norms}\label{dn}
For an integral on some $S_{u , v}$ we consider a coordinate chart $\{ M_i \}_i $ on it and a partition of unity $\{ p_{M_i} \}_i$ subordinate to this chart and we have for some function $f$ that:
$$ \int_{S_{u,v}} f \doteq \sum_i \int_{-\infty}^{\infty} \int_{-\infty}^{\infty} f p_{M_i} \sqrt{\det \gamma} \, d\theta^1 d\theta^2 . $$ 
Then on the hypersurfaces $H_u$ and $\underline{H}_{v}$ integration can be defined as follows using the form of the metric $g$ for any $u_1 < u_2$ and any $v_1 < v_2$:
$$ \int_{H_u (v_1 , v_2 )} f \doteq 2\sum_i \int_{v_1}^{v_2} \int_{-\infty}^{\infty} \int_{-\infty}^{\infty} f p_{M_i} \Omega \sqrt{\det \gamma} \, d\theta^1 d\theta^2 dv , $$
$$ \int_{\underline{H}_{v} (u_1 , u_2 )} f \doteq 2 \sum_i \int_{u_1}^{u_2} \int_{-\infty}^{\infty} \int_{-\infty}^{\infty} f p_{M_i} \Omega \sqrt{\det \gamma} \, d\theta^1 d\theta^2 d u . $$
Finally for a spacetime region $D_{(u_1 , u_2 ) , (v_1 , v_2 )}$ we have that:
\begin{align*}
 \int_{D_{(u_1 , u_2 ) , (v_1 , v_2 )}} f \doteq & \sum_i \int_{u_1}^{u_2} \int_{v_1}^{v_2} \int_{-\infty}^{\infty} \int_{-\infty}^{\infty} f p_{M_i} \sqrt{- \det g} \, d\theta^1 d\theta^2 dv du \\ = & 2 \sum_i \int_{u_1}^{u_2} \int_{v_1}^{v_2} \int_{-\infty}^{\infty} \int_{-\infty}^{\infty} f p_{M_i} \Omega^2 \sqrt{\det \gamma} \, d\theta^1 d\theta^2 dv du . 
 \end{align*}
We note here that by $D_{u, v}$ we will denote the region $D_{(u_0 , u ) , (v_0 , v)}$.

The $L^p$ norms of arbitrary tensor fields $f$ on the $S$-sections and on the null hypersurfaces $H_u$ and $\underline{H}_{v}$ are defined as follows:
\begin{equation}\label{norm:lps}
\| f\|_{L^p (S_{u,\underline{u}})} = \left( \int_{S_{u,v}} \langle f, f \rangle_{\gamma}^{p/2} \, d\mu_{S} \right)^{1/p} \mbox{  for $1 \leq p < \infty$,}
\end{equation}
\begin{equation}\label{norm:lpu}
\| f \|_{L^p (H_u)} = \left( \int_{v_0}^{\infty}\int_{S_{u,v}} \langle f , f \rangle_{\gamma}^{p/2} \, d\mu_{S} dv \right)^{1/p} \mbox{  for $1 \leq p < \infty$,}
\end{equation}
\begin{equation}\label{norm:lpubar}
\| f \|_{L^p (\underline{H}_{v})} = \left( \int_{u_0}^{U} \int_{S_{u,v}} \langle f , f \rangle_{\gamma}^{p/2} \, d\mu_{S} du\right)^{1/p} \mbox{  for $1 \leq p < \infty$,}
\end{equation}
\begin{equation}\label{norm:linfty}
\| f \|_{L^{\infty} (S_{u,v} )} = \sup_{\theta \in S_{u,v}} \langle f , f \rangle_{\gamma}^{1/2} ( \theta ) . 
\end{equation}
Finally we define we following mixed norms (again for arbitrary tensor fields $f$) for some $U \in (u_0 , \infty)$ , $V \in (v_0 , \infty ]$:
\begin{equation}\label{norm:pqr1}
\| f \|_{L^p_u L^q_v L^r_S } \doteq \left( \int_{u_0}^U \left( \int_{v_0}^V \left( \int_{S_{u,v}} \langle f , f \rangle_{\gamma}^{r/2} d\mu_S \right)^{q/r} \, dv \right)^{p/q} \, du \right)^{1/p} ,
\end{equation}
\begin{equation}\label{norm:pqr2}
\| f \|_{L^p_v L^q_u L^r_S } \doteq \left( \int_{v_0}^V \left( \int_{u_0}^U \left( \int_{S_{u,v}} \langle f , f \rangle_{\gamma}^{r/2} d\mu_S \right)^{q/r} \, du \right)^{p/q} \, dv \right)^{1/p} .
\end{equation}

\section{The main theorems}\label{mainthm}
The main result of the paper is the following theorem where we make use of scaling invariant norms on $\hat{\chi}$ on the hypersurfaces $H_u$ while on the $\underline{H}_v$ hypersurfaces we work with weighted norms that allow us to consider $\hat{\underline{\chi}}$ belonging only to $L^1_u$ (these are the same norms as in \cite{weaknull}).
\begin{theorem}\label{thm:main}
Let $U \in ( u_0 , \infty )$ and $V = \infty$. For given initial data on $H_{u_0}$ for $v_0 \leq v < \infty$ and on $\underline{H}_{\underline{u}_0}$ for $u_0 \leq u \leq U$ where $U - u_0 \leq \epsilon$ and $\epsilon$ is appropriately small, we assume that
$$ c_1 \leq \left. \det \gamma \right|_{S_{u , v_0}} \leq c_2 , $$
$$ \left. \det \gamma \right|_{S_{u_0 , v}} \geq c_3 v^4 , $$
$$ \sum_{k \leq 3} \left( \left| \left. \left( \frac{\partial}{\partial \theta} \right)^k \gamma \right|_{S_{u , v_0}} \right| \leq c_4 , \quad \left| \left. \left( \frac{\partial}{\partial \theta} \right)^k \gamma \right|_{S_{u_0 , v}} \right| \right) \leq c_5 v^2 , $$
\begin{align*}
 \mathcal{I}_1 \doteq \sum_{i=0}^3 \sum_{\psi_0 \in \{ \eta , \underline{\eta} \}} \Big( \| v_0 ( v_0 \slashed{\nabla} )^i \psi_0 \|_{L^{\infty}_u L^2 (S_{u ,v_0 } )} + & \| v ( v \slashed{\nabla} )^i \psi_0 \|_{L^{\infty}_v L^2 (S_{u_0 ,v } )} \\ + &  \| ( v \slashed{\nabla} )^4 \psi_0 \|_{L^2_v L^2 (S_{u_0 , v } )} + \| v_0 ( v_0 \slashed{\nabla} )^4 \psi_0 \|_{L^2_u L^2 (S_{u , v_0 } )}  \Big) \leq \epsilon_1 , 
 \end{align*}
\begin{align*}
\mathcal{I}_2 \doteq \sum_{i=0}^4 \Bigg[ \sum_{\psi_1 \in \{ \hat{\chi} , \slashed{\mathrm{tr}} \chi - \frac{2}{v} \} }   \Bigg( \left\| \frac{1}{\sqrt{v}} ( v \slashed{\nabla} )^i \psi_1 \right\|_{L^2_v  L^2 (S_{u_0,v} )} +& \left\| \frac{1}{v} ( v \slashed{\nabla} )^i \psi_1 \right\|_{L^1_v L^2 (S_{u_0 , v} ) } \Bigg) \\ & + \| v^{1/2} ( v \slashed{\nabla} )^i \omega \|_{L^2_v  L^2 (S_{u_0 ,v} ) } + \| (v \slashed{\nabla} )^i \omega \|_{L^1_v L^2 (S_{u_0 , v} )} \Bigg] \leq \epsilon_2 , 
\end{align*}
$$ \mathcal{I}_3 \doteq \sum_{i=0}^4 \left( \| \mathrm{w} (u ) (v_0 \slashed{\nabla} )^i \underline{\hat{\chi}} \|_{L^2_u L^2 (S_{u , v_0 } )} +  \left\| \mathrm{w} (u )  v_0 ( v_0 \slashed{\nabla} )^i  \left( \slashed{\mathrm{tr}} \underline{\hat{\chi}} + \frac{2}{v_0} \right) \right\|_{L^2_u L^2 (S_{u , v_0 } )} \right)  \leq \epsilon_3 , $$
\begin{align*}
 \mathcal{I}_4 \doteq \sum_{i=0}^3  \Bigg(  \| v^{3/2} ( v \slashed{\nabla} )^i \beta \|_{L^2_v L^2 (S_{u_0 , v} ) } + &  \sum_{\Psi \in \{ K - \frac{1}{v^2} , \check{\sigma} \}} \| v_0^{3/2} ( v_0 \slashed{\nabla} )^i \Psi_1 \|_{L^2_u L^2 (S_{u , v_0 } )}  \\ + & \| \mathrm{w} (u) v_0 ( v_0 \slashed{\nabla} )^i \underline{\beta} \|_{L^2_u L^2 (S_{u, v_0} )}   + \sum_{\Psi \in \{ K - \frac{1}{v^2} , \check{\sigma} \}} \| \mathrm{w} (u_0 ) v ( v \slashed{\nabla} )^i \Psi_2 \|_{L^2_v L^2 (S_{u_0 , v } ) } \Bigg) \leq \epsilon_4 , 
 \end{align*}
for constants $c_1$, $c_2$, $c_3$, $c_4$, $c_5$ and $\epsilon_1 , \epsilon_2 , \epsilon_3 , \epsilon_4 > 0$. Note that in the norms above when $u$ is fixed, $v \in [v_0 , \infty)$ in the $L^{\infty}_v$ norms and the interval of integration is $[v_0 , \infty)$ for the $L^p_v$ norms for $p < \infty$, while  when $v$ is fixed, $u \in [ u_0 , U ]$ for the $L^{\infty}_u$ norms and the interval of integration is $[u_0 , U]$ for the $L^p_u$ norms for $p < \infty$.

Then we have that:
$$ \mathcal{O}_1 \doteq \sum_{i=0}^3 \sum_{\psi_0 \in \{ \eta , \underline{\eta} \}} \Big( \| v ( v \slashed{\nabla} )^i \psi_0 \|_{L^{\infty}_u L^{\infty}_v L^2 (S_{u ,v } )} +   \| ( v \slashed{\nabla} )^4 \psi_0 \|_{L^2_v L^{\infty}_u L^2 (S_{u , v } )} + \| v ( v \slashed{\nabla} )^4 \psi_0 \|_{L^2_u L^{\infty}_v L^2 (S_{u , v } )}  \Big) \leq C  , $$
\begin{align*}
\mathcal{O}_2 \doteq \sum_{i=0}^4 \Bigg[ \sum_{\psi_1 \in \{ \hat{\chi} , \slashed{\mathrm{tr}} \chi - \frac{2}{v} \} }   &\Bigg( \left\| \frac{1}{\sqrt{v}} ( v \slashed{\nabla} )^i \psi_1 \right\|_{L^2_v L^{\infty}_u  L^2 (S_{u,v} )} + \left\| \frac{1}{v} ( v \slashed{\nabla} )^i \psi_1 \right\|_{L^1_v L^{\infty}_u L^{\infty} (S_{u , v} ) } \Bigg) \\ & + \| v^{1/2} ( v \slashed{\nabla} )^i \omega \|_{L^2_v  L^{\infty}_u L^2 (S_{u ,v} ) } + \| (v \slashed{\nabla} )^i \omega \|_{L^1_v L^{\infty}_u L^2 (S_{u , v} )} \Bigg]   \leq C , 
\end{align*}
$$ \mathcal{O}_3 \doteq \sum_{i=0}^4 \left( \| \mathrm{w} (u ) (v \slashed{\nabla} )^i \underline{\hat{\chi}} \|_{L^2_u L^{\infty}_v L^2 (S_{u , v } )} + \left\| \mathrm{w} (u )  v ( v \slashed{\nabla} )^i  \left( \slashed{\mathrm{tr}} \underline{\hat{\chi}} + \frac{2}{v} \right) \right\|_{L^2_u L^{\infty}_v L^2 (S_{u , v } )} \right)  \leq C , $$
\begin{align*}
 \mathcal{O}_4 \doteq \sum_{i=0}^3  \Bigg(  \| v^{3/2} ( v \slashed{\nabla} )^i \beta \|_{L^{\infty}_u L^2_v L^2 (S_{u , v} ) } + & \sum_{\Psi \in \{ K - \frac{1}{v^2} , \check{\sigma} \}} \| v^{3/2} ( v \slashed{\nabla} )^i \Psi_1 \|_{L^{\infty}_v L^2_u L^2 (S_{u , v } )}   \\ + & \| \mathrm{w} (u) v ( v \slashed{\nabla} )^i \underline{\beta} \|_{L^{\infty}_v L^2_u L^2 (S_{u, v} )}   + \sum_{\Psi \in \{ K - \frac{1}{v^2} , \check{\sigma} \}} \| \mathrm{w} (u ) v ( v \slashed{\nabla} )^i \Psi_2 \|_{L^{\infty}_u L^2_v L^2 (S_{u , v } ) } \Bigg) \leq C ,
 \end{align*}
where $C \doteq C ( c_1 , c_2 , c_3 , c_4 , c_5 , \epsilon_1 , \epsilon_2 , \epsilon_3 , \epsilon_4 )$, where $\mathrm{w} : [ u_0 , U ) \rightarrow \mathbb{R}_{\geq 0}$ is a smooth non-negative decreasing function for which we have that
$$ \int_{u_0}^U \frac{1}{\mathrm{w}^2 (u)} \, du \leq \epsilon . $$ 
Note that in the above norms $v \in [v_0 , \infty)$ in the $L^{\infty}_v$ norms and the interval of integration is $[v_0 , \infty)$ for the $L^p_v$ norms for $p < \infty$, while  and $u \in [ u_0 , U ]$ for the $L^{\infty}_u$ norms and the interval of integration is $[u_0 , U]$ for the $L^p_u$ norms for $p < \infty$.
\end{theorem}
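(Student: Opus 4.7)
\section*{Proof proposal}

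The plan is a standard bootstrap argument adapted to the anisotropic norm structure, combining a Luk-type short-$u$ weak null singularity estimate with scale-invariant bounds in $v$ running all the way to $v=\infty$. I would first fix large constants $\tilde C_i \gg \epsilon_i$ and assume as a bootstrap that each $\mathcal{O}_j \leq \tilde C_j$ on some maximal region $D_{U^*, V^*}$ with $u_0 < U^* \leq U$, $v_0 < V^* \leq \infty$. By standard local existence for the characteristic problem (since data are smooth away from the singular behavior at $u = U$) such a region with the correct regularity and double-null foliation exists, and it suffices to improve the bootstrap with constants independent of $(U^*, V^*)$; local continuation then gives $(U^*, V^*) = (U, \infty)$.

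Inside the bootstrap region I would estimate the quantities in three tiers. \textbf{Tier 1 (geometry and lower-order Ricci):} from \eqref{eq:upuOm}, \eqref{eq:b}, \eqref{eq:lgamma}, \eqref{eq:ulgamma} I get $L^\infty$ control of $\Omega$, $b$, $\gamma$ and the Sobolev constants on $S_{u,v}$ from the initial bounds on $\det\gamma$ and its $\theta$-derivatives plus the smallness $\int_{u_0}^U \tfrac{1}{\mathrm{w}^2} \leq \epsilon$ and $\epsilon_i$ (this uses Grönwall along $e_3$, where $\|\hat{\underline\chi}\|_{L^1_u}$ is controlled by Cauchy--Schwarz against $1/\mathrm{w}$). \textbf{Tier 2 (Ricci coefficients up to $3$rd $\slashed\nabla$-derivatives):} I would integrate \eqref{eq:chi4}--\eqref{eq:trchi4} and \eqref{eq:eta4}, \eqref{eq:omega3} along $e_4$ to recover $\hat\chi, \slashed{\mathrm{tr}}\chi - 2/v, \omega, \eta$ in the $v$-scale-invariant norms defining $\mathcal{O}_2, \mathcal{O}_1$, gaining factors of $v^{-1}$ from the $\slashed{\mathrm{tr}}\chi$-damping on the left-hand side; and integrate \eqref{eq:uchi3}, \eqref{eq:truchir3}, \eqref{eq:eta3} along $e_3$, where the weight $\mathrm{w}(u)$ absorbs the loss from $\hat{\underline\chi}\notin L^2_u$ exactly as in \cite{weaknull}. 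The right-hand sides are nonlinearities whose products are estimated by the Sobolev embedding on $S_{u,v}$ together with the mixed-norm Hölder inequalities, and all resulting terms carry at least one factor of $(U-u_0)^{1/2}$, $\epsilon_i$, or $\int 1/\mathrm{w}^2 \leq \epsilon$, so they can be absorbed into the bootstrap constants.

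\textbf{Tier 3 (curvature energy estimates):} this is the core analytic step. Since the naive Bianchi pairs $(\beta,\rho,\sigma,\underline\beta)$ would force me to differentiate $\hat{\underline\chi}$ (which is only in $L^1_u$) to control $\rho,\sigma$, I would instead use the renormalized pairs $(K - 1/v^2, \check\sigma, \beta, \underline\beta)$ together with \eqref{eq:rgauss3}, \eqref{eq:rgauss4}, \eqref{eq:csigma3}, \eqref{eq:csigma4}, \eqref{eq:ubeta44}, \eqref{eq:beta33}. Contracting the two Bianchi pairs $\{\slashed\nabla_4(K-1/v^2), \slashed\nabla_4 \check\sigma, \slashed\nabla_3 \beta\}$ and $\{\slashed\nabla_3(K-1/v^2), \slashed\nabla_3 \check\sigma, \slashed\nabla_4 \underline\beta\}$ with appropriate weights $v^3$ and $\mathrm{w}(u)^2 v^2$ and integrating over $D_{u,v}$ produces two divergence identities whose boundary terms are exactly the squared norms appearing in $\mathcal{O}_4$; the bulk error terms are products of Ricci coefficients and curvature that I estimate using the Tier 2 bounds and Sobolev on spheres. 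Commuting up to four angular derivatives is handled by writing $[\slashed\nabla^i, \slashed\nabla_{3/4}]$ in terms of connection coefficients via the commutation formulas, picking up only controlled lower-order contributions.

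\textbf{The main obstacle} is the top-order ($i=4$) closure in the presence of the weak null singularity: commuting $\slashed\nabla^4$ through the transport equations \eqref{eq:uchi3}, \eqref{eq:eta3} produces $\slashed\nabla^4 \hat{\underline\chi}$ and $\slashed\nabla^4 \eta$ terms that, if controlled naively, would require an $L^2_u$ bound on $\hat{\underline\chi}$ that is unavailable. The resolution, following \cite{weaknull}, is to use the weighted norm with $\mathrm{w}(u)$: the loss $\|\hat{\underline\chi}\|_{L^2_u}$ is replaced by $\|\mathrm{w}(u)\hat{\underline\chi}\|_{L^2_u}\cdot\|1/\mathrm{w}\|_{L^\infty_u}$ or via Hardy-type estimates against $\int 1/\mathrm{w}^2 \leq \epsilon$, and this smallness closes the top-order estimate. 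Once Tiers 1--3 are improved with constants strictly better than the bootstrap assumptions (taking $\epsilon, U-u_0, \epsilon_i$ small), a continuity argument in $(U^*, V^*)$ yields the full bounds on $D_{U,\infty}$, and the resulting control on Christoffel symbols plus Tier 2 suffices to run the standard uniqueness argument; the non-extendibility past $u=U$ follows because by construction $\|\hat{\underline\chi}\|_{L^p_u}=\infty$ for $p>1$ near $u=U$ on $\underline H_{v_0}$, which is propagated to all $v$.
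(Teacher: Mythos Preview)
Your overall bootstrap architecture and Tier 3 choice of the renormalized Bianchi system $(\beta, K-1/v^2, \check{\sigma}, \underline{\beta})$ are exactly what the paper does. The genuine gap is in Tier 2: you have the transport directions for the Ricci coefficients reversed from what the norm structure actually permits, and each reversal introduces a term that is not controlled in the $\mathcal{O}_j$ hierarchy.

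Concretely: (i) You propose integrating \eqref{eq:chi4} for $\hat{\chi}$ along $e_4$, but that equation has $-\alpha$ on the right; $\alpha$ is nowhere in $\mathcal{O}_4$ for Theorem~\ref{thm:main} (it only enters in Theorem~\ref{thm:main2}), so you cannot close. The paper instead integrates the $\slashed{\nabla}_3$-equation \eqref{eq:chi3} in $u$ and \emph{then} integrates the resulting $L^2(S)$-bound in $v$ against the scale-invariant weights $v^{-1}$ and $v^{-1/2}$. (ii) You propose \eqref{eq:eta4} for $\eta$; this produces $\int v\|\beta\|_{L^2(S)}\,dv$, and with only $\|v^{3/2}\beta\|_{L^2_v L^2(S)}$ available this integral diverges logarithmically. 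The paper explicitly notes this obstruction and uses the $\slashed{\nabla}_3$-equations \eqref{eq:eta3}, \eqref{eq:ueta3} for both $\eta$ and $\underline{\eta}$. (iii) You propose \eqref{eq:uchi3} for $\hat{\underline{\chi}}$ along $e_3$, but that equation contains $-\underline{\alpha}$, which is precisely the uncontrolled quantity in the weak-null-singularity regime; the paper uses the $\slashed{\nabla}_4$-equation \eqref{eq:uchi4} instead.

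Beyond the transport directions, your top-order discussion is too vague to close. At $i=4$ the paper does not simply commute and absorb via $\mathrm{w}(u)$: it introduces the auxiliary mass-aspect quantities $\mu = K - 1/v^2 + \slashed{\mathrm{div}}\,\eta$ and $\underline{\mu} = K - 1/v^2 - \slashed{\mathrm{div}}\,\underline{\eta}$ (whose $\slashed{\nabla}_3$-equations lose the dangerous $\slashed{\mathrm{div}}\,\underline{\beta}$) to get $\slashed{\nabla}^4\eta,\slashed{\nabla}^4\underline{\eta}$; it introduces $\omega^{\dagger}$ and $\kappa = \slashed{\nabla}\omega + {}^*\slashed{\nabla}\omega^{\dagger} - \tfrac12\beta$ together with an elliptic div--curl system to reach $\slashed{\nabla}^4\omega$; and for $\slashed{\nabla}^4\hat{\chi}$ it uses the Codazzi equation \eqref{eq:codchi} \emph{simultaneously} with the $\slashed{\nabla}_4$-transport of $\slashed{\mathrm{tr}}\chi - 2/v$, rather than separately, to avoid a loss. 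None of these devices appear in your outline, and without them the top-order step does not close in the stated norms.
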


We can also obtain another semi-classical construction by assuming stronger $L^{\infty} (S)$ bounds on $\hat{\chi}$, while relaxing the decay assumptions on $\eta$ and $\underline{\eta}$. On the other hand though we have to assume that the potential singularity on the the initial $\underline{H}_{v_0}$ hypersurface is weaker than the one that can be imposed with the data of Theorem \ref{thm:main}. For Theorem \ref{thm:main} we can have a weak null singularity on $H_U$, but for Theorem \ref{thm:main2} we can only have an impulsive gravitational wave propagating along some $H_{u_c}$ for some $u_0 < u_c < U$.

\begin{theorem}\label{thm:main2}
Let $U \in ( u_0 , \infty )$ and $V = \infty$, and fix some $\delta \in \left( \frac{1}{3} ,1 \right)$. For given initial data on $H_{u_0}$ for $v_0 \leq v < \infty$ and on $\underline{H}_{\underline{u}_0}$ for $u_0 \leq u \leq U$ where $U - u_0 \leq \epsilon$ and $\epsilon$ is small enough, we assume that
$$ c_1 \leq \left. \det \gamma \right|_{S_{u , v_0}} \leq c_2 , $$
$$ \left. \det \gamma \right|_{S_{u_0 , v}} \geq c_3 v^4 , $$
$$  \sum_{k \leq 3} \left( \left| \left. \left( \frac{\partial}{\partial \theta} \right)^k \gamma \right|_{S_{u , v_0}} \right| \leq c_4 , \quad \left| \left. \left( \frac{\partial}{\partial \theta} \right)^k \gamma \right|_{S_{u_0 , v}} \right| \right) \leq c_5 v^2 , $$
\begin{align*}
 \mathcal{I}_1^2 \doteq \sum_{i=0}^3 \sum_{\psi_0 \in \{ \eta , \underline{\eta} \}} \Big( \| v_0^{\delta } ( v_0 \slashed{\nabla} )^i \psi_0 \|_{L^{\infty}_u L^2 (S_{u ,v_0 } )} + & \| v^{\delta } ( v \slashed{\nabla} )^i \psi_0 \|_{L^{\infty}_v L^2 (S_{u_0 ,v } )} \\ + &  \left\| \frac{1}{v^{1/2}} ( v \slashed{\nabla} )^4 \psi_0 \right\|_{L^2_v L^2 (S_{u_0 , v } )} + \| v_0^{\delta } ( v_0 \slashed{\nabla} )^4 \psi_0 \|_{L^2_u L^2 (S_{u , v_0 } )}  \Big) \leq \epsilon_1 , 
 \end{align*}
$$ \mathcal{I}_2^2 \doteq \sum_{i=0}^4 \Bigg( \sum_{\psi_1 \in \{ \hat{\chi} , \slashed{\mathrm{tr}} \chi - \frac{2}{v} \} }   \left\| v^{\delta} ( v \slashed{\nabla} )^i \psi_1 \right\|_{L^{\infty}_v  L^2 (S_{u_0,v} )}  + \| v^{1+\delta} ( v \slashed{\nabla} )^i \omega \|_{L^{\infty}_v  L^2 (S_{u_0 ,v} ) } \Bigg) \leq \epsilon_2 , $$
$$ \mathcal{I}_3^2 \doteq \sum_{i=0}^4 \left( \left\| \widetilde{\mathrm{w}} (u ) \frac{1}{v_0^{1-\delta}} (v_0 \slashed{\nabla} )^i \underline{\hat{\chi}} \right\|_{L^2_u L^2 (S_{u , v_0 } )} +  \left\| \widetilde{\mathrm{w}} (u )  v_0^{\delta} ( v_0 \slashed{\nabla} )^i  \left( \slashed{\mathrm{tr}} \underline{\hat{\chi}} + \frac{2}{v_0} \right) \right\|_{L^2_u L^2 (S_{u , v_0 } )} \right)  \leq \epsilon_3 , $$
\begin{align*}
 \mathcal{I}_4^2 \doteq \sum_{i=0}^3  \Bigg( \| v^{3/2} ( v \slashed{\nabla} )^i \alpha \|_{L^2_v L^2 (S_{u_0 , v} ) }  + & \| v^{3/2} ( v \slashed{\nabla} )^i \beta \|_{L^2_v L^2 (S_{u_0 , v} ) } \\ + & \| v_0^{3/2} ( v_0 \slashed{\nabla} )^i \beta \|_{L^2_u L^2 (S_{u , v_0 } )} +\sum_{\Psi \in \{ \check{\rho} , \check{\sigma} \}} \| v_0^{3/2} ( v_0 \slashed{\nabla} )^i \Psi_1 \|_{L^2_u L^2 (S_{u , v_0 } )}  \\ & + \| \widetilde{\mathrm{w}} (u) v_0^{1/2} ( v_0 \slashed{\nabla} )^i \underline{\beta} \|_{L^2_u L^2 (S_{u, v_0} )}   + \sum_{\Psi \in \{ \check{\rho} , \check{\sigma} \}} \| \widetilde{\mathrm{w}} (u_0 ) v^{1/2} ( v \slashed{\nabla} )^i \Psi_2 \|_{L^2_v L^2 (S_{u_0 , v } ) } \Bigg) \leq \epsilon_4 , 
 \end{align*}
for constants $c_1$, $c_2$, $c_3$, $c_4$, $c_5$ and $\epsilon_1 , \epsilon_2 , \epsilon_3 , \epsilon_4 > 0$, where $\widetilde{\mathrm{w}} : [ u_0 , U ) \rightarrow \mathbb{R}_{\geq 0}$ is a smooth non-negative decreasing and \textbf{bounded} function for which we have that
$$ \int_{u_0}^U \frac{1}{\widetilde{\mathrm{w}}^2 (u)} \, du \leq \epsilon . $$
Note that in the norms above when $u$ is fixed, $v \in [v_0 , \infty)$ in the $L^{\infty}_v$ norms and the interval of integration is $[v_0 , \infty)$ for the $L^p_v$ norms for $p < \infty$, while  when $v$ is fixed, $u \in [ u_0 , U ]$ for the $L^{\infty}_u$ norms and the interval of integration is $[u_0 , U]$ for the $L^p_u$ norms for $p < \infty$.

Then we have that:
$$ \mathcal{O}_1^2 \doteq \sum_{i=0}^3 \sum_{\psi_0 \in \{ \eta , \underline{\eta} \}} \Big( \| v^{\delta} ( v \slashed{\nabla} )^i \psi_0 \|_{L^{\infty}_{u,v} L^2 (S_{u ,v } )}  +   \left\| \frac{1}{v^{1/2}} ( v \slashed{\nabla} )^4 \psi_0 \right\|_{L^2_v L^{\infty}_u L^2 (S_{u , v } )} + \| v^{\delta} ( v \slashed{\nabla} )^4 \psi_0 \|_{L^2_u L^{\infty}_v L^2 (S_{u , v } )}  \Big) \leq C  , $$
$$ \mathcal{O}_2^2 \doteq \sum_{i=0}^4 \Bigg( \sum_{\psi_1 \in \{ \hat{\chi} , \slashed{\mathrm{tr}} \chi - \frac{2}{v} \} }    \left\| v^{\delta} ( v \slashed{\nabla} )^i \psi_1 \right\|_{L^{\infty}_{u,v}   L^2 (S_{u,v} )}  + \| v^{1 + \delta} ( v \slashed{\nabla} )^i \omega \|_{  L^{\infty}_{u,v} L^2 (S_{u ,v} ) }  \Bigg)   \leq C , $$
$$ \mathcal{O}_3^2 \doteq \sum_{i=0}^4 \left( \left\| \widetilde{\mathrm{w}} (u ) \frac{1}{v^{1-\delta}} (v \slashed{\nabla} )^i \underline{\hat{\chi}} \right\|_{L^2_u L^{\infty}_v L^2 (S_{u , v } )} + \left\| \widetilde{\mathrm{w}} (u )  v^{\delta} ( v \slashed{\nabla} )^i  \left( \slashed{\mathrm{tr}} \underline{\hat{\chi}} + \frac{2}{v} \right) \right\|_{L^2_u L^{\infty}_v L^2 (S_{u , v } )} \right)  \leq C , $$
\begin{align*}
 \mathcal{O}_4^2 \doteq \sum_{i=0}^3  \Bigg(  \| v^{3/2} ( v \slashed{\nabla} )^i \alpha \|_{L^{\infty}_u L^2_v L^2 (S_{u , v} )}+&  \| v^{3/2} ( v \slashed{\nabla} )^i \beta \|_{L^{\infty}_u L^2_v L^2 (S_{u , v} ) } \\ + &  \| v^{3/2} ( v \slashed{\nabla} )^i \beta \|_{L^{\infty}_v L^2_u L^2 (S_{u , v } )} + \sum_{\Psi \in \{ \check{\rho} , \check{\sigma} \}} \| v^{3/2} ( v \slashed{\nabla} )^i \Psi_1 \|_{L^{\infty}_v L^2_u L^2 (S_{u , v } )}    \\ & + \| \widetilde{\mathrm{w}} (u) v^{1/2} ( v \slashed{\nabla} )^i \underline{\beta} \|_{L^{\infty}_v L^2_u L^2 (S_{u, v} )}   + \sum_{\Psi \in \{ \check{\rho} , \check{\sigma} \}} \| \widetilde{\mathrm{w}} (u ) v^{1/2} ( v \slashed{\nabla} )^i \Psi_2 \|_{L^{\infty}_u L^2_v L^2 (S_{u , v } ) } \Bigg) \leq C ,
 \end{align*}
where $C \doteq C ( c_1 , c_2 , c_3 , c_4 , c_5 , \epsilon_1 , \epsilon_2 , \epsilon_3 , \epsilon_4 )$. Note that in the above norms $v \in [v_0 , \infty)$ in the $L^{\infty}_v$ norms and the interval of integration is $[v_0 , \infty)$ for the $L^p_v$ norms for $p < \infty$, while  and $u \in [ u_0 , U ]$ for the $L^{\infty}_u$ norms and the interval of integration is $[u_0 , U]$ for the $L^p_u$ norms for $p < \infty$.
\end{theorem}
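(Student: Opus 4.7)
The plan is to prove Theorem \ref{thm:main2} by a bootstrap/continuity argument on the characteristic rectangle $D_{U,\infty}$, following the same structural template as the proof of Theorem \ref{thm:main} and the local theory of \cite{iwaves1}, \cite{iwaves2}, \cite{weaknull}, but with the $v^{\delta}$-weighted $L^{\infty}_v$-type norms adapted to the impulsive (rather than weak null) singularity. One first posits the bootstrap assumption $\mathcal{O}_1^2 + \mathcal{O}_2^2 + \mathcal{O}_3^2 + \mathcal{O}_4^2 \leq 2C$ on $D_{U,\infty}$ and shows that by choosing $\epsilon$ and the data sizes $\epsilon_1, \ldots, \epsilon_4$ small, the bound improves to $C$. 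The actual spacetime is then produced by approximation, smoothing the jump in $\underline{\hat{\chi}}$ on $\underline{H}_{v_0}$, applying the local existence theory of \cite{iwaves1} together with the semi-global a priori estimates to extend the solution in $D_{U,\infty}$, and passing to the limit using the uniform $\mathcal{O}_j^2$ bounds.

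The estimates would close in the following order. \textbf{(i)} Recover $\gamma, \Omega, b$ from the transport equations \eqref{eq:lgamma}--\eqref{eq:ulgamma}, \eqref{eq:upuOm} and \eqref{eq:b}, which combined with the $\det\gamma$ hypotheses yields uniform non-degeneracy of $S_{u,v}$ and Sobolev embeddings with fixed constants. \textbf{(ii)} Estimate the smooth Ricci coefficients $\eta, \underline{\eta}, \omega$ by integrating \eqref{eq:eta3}, \eqref{eq:eta4} and \eqref{eq:omega3} along the appropriate null direction; the $v^{\delta}$ weight is propagated using $\delta > 1/3$, which ensures that all cubic error terms involving $\eta, \underline{\eta}$ have $v$-decay integrable on $[v_0,\infty)$. \textbf{(iii)} Recover $\hat{\chi}$ and $\slashed{\mathrm{tr}}\chi - 2/v$ from \eqref{eq:chi4}, \eqref{eq:trchir4} integrated along $v$ starting from $H_{u_0}$, driving the estimate by the bootstrap bound on $\alpha$. \textbf{(iv)} Recover the singular $\underline{\hat{\chi}}$ and $\slashed{\mathrm{tr}}\underline{\chi} + 2/v$ from \eqref{eq:uchi3}, \eqref{eq:truchir3} integrated along $u$ from $\underline{H}_{v_0}$; all source terms are absorbed via a weighted Gr\"onwall in $u$, the weight $\widetilde{\mathrm{w}}(u)$ being chosen precisely so that $\int_{u_0}^U \widetilde{\mathrm{w}}(u)^{-2}\,du \leq \epsilon$ closes the loop. \textbf{(v)} Obtain the curvature bounds from energy estimates for the renormalized Bianchi pairs $(\alpha, \beta)$, $(\beta, \check{\rho}, \check{\sigma})$, and $(\check{\rho}, \check{\sigma}, \underline{\beta})$ using \eqref{eq:alpha3}--\eqref{eq:beta4} coupled with \eqref{eq:crho3}--\eqref{eq:csigma4}; the renormalization is essential to avoid the apparent loss of derivative in the $\hat{\chi}\cdot\underline{\hat{\chi}}$ terms present in the unrenormalized Bianchi identities. \textbf{(vi)} Repeat the whole scheme after commuting every equation with $v\slashed{\nabla}$ up to order three or four, estimating the commutator terms by the bootstrap.

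The main obstacle is the treatment of nonlinear products involving the singular $\underline{\hat{\chi}}$. In expressions like $\hat{\chi} \cdot \underline{\hat{\chi}}$ appearing in \eqref{eq:truchi4}, \eqref{eq:crho4} and \eqref{eq:ubeta44}, I would place $\hat{\chi}$ in $L^{\infty}_{u,v} L^2(S)$ (affordable thanks to the $v^{\delta}$ weight and a Sobolev embedding from the fourth-derivative component of $\mathcal{O}_2^2$) and $\underline{\hat{\chi}}$ in $\widetilde{\mathrm{w}}(u)^{-1} L^2_u L^{\infty}_v L^2(S)$, then absorb the weight deficit by Cauchy--Schwarz in $u$ against $\int \widetilde{\mathrm{w}}^{-2}\,du \leq \epsilon$; this is the same mechanism that drives Luk's argument in \cite{weaknull} and is precisely where the smallness of the $u$-interval enters. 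A secondary delicate point is the energy estimate for $\underline{\beta}$, which is only available in $\widetilde{\mathrm{w}}$-weighted $L^{\infty}_v L^2_u L^2(S)$: the identities \eqref{eq:crho3}, \eqref{eq:csigma3}, coupled with \eqref{eq:ubeta44}, must be integrated by parts on the characteristic diamond, with the regular part of the flux controlled directly and the singular source terms containing $\underline{\hat{\chi}}$ absorbed by the same $\widetilde{\mathrm{w}}$-weighted Gr\"onwall as in step (iv). Finally, controlling $\alpha$ uses the $(\alpha, \beta)$ Bianchi identity via \eqref{eq:alpha3}--\eqref{eq:beta4}, discarding the (favourably signed) $\underline{H}_v$-flux to retain only the $H_u$-flux of $\alpha$ appearing in $\mathcal{O}_4^2$; the top-order nonlinearities are handled by placing the lower-order Ricci coefficients in $L^{\infty}$ via the bootstrap.
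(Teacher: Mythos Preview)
Your overall architecture is right and matches the paper's: bootstrap on the $\mathcal{O}_i^2$ norms, metric components first, then Ricci coefficients via transport, then curvature via the renormalized Bianchi pairs $(\alpha,\beta)$, $(\beta,\check{\rho},\check{\sigma})$, $(\check{\rho},\check{\sigma},\underline{\beta})$. In particular your choice in step (iii) to recover $\hat{\chi}$ from the $\slashed{\nabla}_4$ equation \eqref{eq:chi4} (driven by $\alpha$) is exactly the paper's approach and the main point of departure from Theorem \ref{thm:main}.

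There is, however, a genuine gap in step (iv). You propose to recover $\hat{\underline{\chi}}$ from \eqref{eq:uchi3}, i.e.\ $\slashed{\nabla}_3 \hat{\underline{\chi}} + \slashed{\mathrm{tr}}\underline{\chi}\,\hat{\underline{\chi}} = -\underline{\alpha}$, ``integrated along $u$ from $\underline{H}_{v_0}$''. This cannot work, for two reasons. First, the right-hand side is $\underline{\alpha}$, which is nowhere in the $\mathcal{O}_4^2$ hierarchy and is precisely the component that is allowed to be a measure in the impulsive-wave setting (cf.\ the remark after the proof); there is no estimate on $\underline{\alpha}$ to feed into a Gr\"onwall inequality. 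Second, the direction is wrong: integrating a $\slashed{\nabla}_3$ equation takes you from $H_{u_0}$ to $H_u$, not from $\underline{H}_{v_0}$; but the singular data for $\hat{\underline{\chi}}$ lives on $\underline{H}_{v_0}$, so you must propagate in $v$. The paper (already in the proof of Theorem \ref{thm:main}) uses the $\slashed{\nabla}_4$ equation \eqref{eq:uchi4} for $\hat{\underline{\chi}}$, whose source terms $\slashed{\nabla}\hat{\otimes}\underline{\eta}$, $\omega\hat{\underline{\chi}}$, $\slashed{\mathrm{tr}}\underline{\chi}\,\hat{\chi}$, $\underline{\eta}\hat{\otimes}\underline{\eta}$ are all controlled by the bootstrap; the $\widetilde{\mathrm{w}}$-weighted $L^2_u L^{\infty}_v$ norm is then recovered by integrating in $v$ and only afterwards taking $L^2_u$ against $\widetilde{\mathrm{w}}^2$. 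Once you make this correction the rest of your outline goes through essentially as in the paper.

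A minor secondary point: in step (ii) you list \eqref{eq:eta4} for $\eta$. In the paper both $\eta$ and $\underline{\eta}$ are handled via their $\slashed{\nabla}_3$ equations \eqref{eq:eta3}, \eqref{eq:ueta3} (the $\slashed{\nabla}_4$ equation for $\eta$ produces a borderline $\beta$ term). With the $v^{\delta}$ weights of Theorem \ref{thm:main2} this is less delicate than in Theorem \ref{thm:main}, but you should check that your choice actually closes with the stated $v$-weights on $\beta$.
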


The main challenge in both Theorems is to identify the proper decay in $v$ for all quantities, and try to close all estimates for the different decay rates. 

In the first Theorem \ref{thm:main}, we impose only integrable decay in $v$ for $\hat{\chi}$ (which can be seen as being $1-\delta'$, for any $\delta' > 0$, worse than the expected optimal decay) in the sense of a scaling invariant norm, and we do something similar for $\omega$, while for the rest of the Christoffel symbols we impose the expected optimal decay in $v$ -- even for the Christoffel symbols initially determined on $\underline{H}_{v_0}$ where the decay in $v$ is measured in norms weighted in $u$. Due to the fact that we want to cover the case of a weak null singularity we are forced to work with $\beta$, $\underline{\beta}$, $K$ and $\check{\sigma}$ (so not only avoid $\underline{\alpha}$ but $\check{\rho}$ as well), on which we impose suboptimal (compared to the one expected for asymptotically flat spacetimes) decay in $v$. Moreover, we use the $\snabla_3$ equation for $\hat{\chi}$ (that is \eqref{eq:chi3}) which we integrate in $v$ with appropriate weights to close the estimates in the scaling invariant norms (as an aside let us mention that we do not need to assume the same for $\slashed{\mathrm{tr}} \chi$ as it turns out to be essentially better behaved than $\hat{\chi}$, a fine point has to do with closing the estimates at the highest derivative level where the elliptic structure is used simultaneously for both $\hat{\chi}$ and $\slashed{\mathrm{tr}} \chi$, and not separately as in previous works). Lastly we note that we need the expected optimal decay in $v$ for the rest of the Christoffel symbols due to the use of $K$ in the energy estimates for the curvature components.

In the second Theorem \ref{thm:main2} we allow suboptimal decay for all Christoffel symbols, but because of the last observation of the previous paragraph (i.e. the use of $K$), we cannot use the same machinery as before. Instead we need to use $\alpha$ and $\check{\rho}$ on $H_u$ hypersurfaces instead of $K$. This is also the reason that we have to impose pointwise (in $L^2 (S)$ norms) decay in $v$ for $\hat{\chi}$ and we cannot only assume finiteness in the scaling invariant norms that were used in Theorem \ref{thm:main} (we note though that it might be possible to use only the scaling invariant norms for $\hat{\chi}$ by using $\mu$ and $\underline{\mu}$ -- see the proof of Theorem \ref{thm:main} for their definitions --  instead of $K$).

\begin{remark}
With appropriate modifications, Theorems \ref{thm:main} and \ref{thm:main2} should also hold for $S_{u,v}$ being smooth surfaces of finite area. This variation of Theorem \ref{thm:main2} will be used in upcoming work on the construction of global future geodesically complete spacetimes containing the interaction of two impulsive gravitational waves.
\end{remark}

\begin{remark}
As mentioned earlier if we allow $\hat{\chi}$ to be large measured in the norms from Theorem \ref{thm:main}, this should lead to trapped surface formation, i.e. if for some $V < \infty$ we have that
$$ \int_{v_0}^{V} \frac{1}{v} \| \hat{\chi} \|_{L^2 (S_{u_0 , v })} \, dv + \int_{v_0}^{V} \frac{1}{v} \| \hat{\chi} \|_{L^2 (S_{u_0 , v })}^2 \, dv \geq \widetilde{C} , $$
for some $\widetilde{C}$ large enough, then we expect $S_{U,V}$ to be trapped.
\end{remark}

\begin{remark}
Let us also note that using a construction similar to the one of Li and Yu \cite{liyu} and using Theorem \ref{thm:main2} it should be possible to construct a global future geodesically complete spacetime containing an outgoing impulsive gravitational wave from Cauchy data.
\end{remark}

\section{Characteristic initial data}\label{cid}
The goal of this section is to construct initial data for Theorem \ref{thm:main2}, the data for Theorem \ref{thm:main} follow in a similar way. The main issue is for $\hat{\chi}$ to satisfy
$$ | \hat{\chi} | ( u_0 , v ) \sim \frac{1}{v^{1+\epsilon}} \mbox{  on $H_{u_0}$ for $v \in [v_0 , \infty)$} , $$
for some $\epsilon > 0$, and for $\hat{\underline{\chi}}$ to satsify
$$ | \hat{\underline{\chi}} | ( u , v_0 ) \sim \frac{1}{\mathrm{w}^2 (u)} \mbox{  on $\underline{H}_{v_0}$ for $u \in [ u_0 , U ]$.} $$ 

As it is well known (see \cite{DC09}) we need to specify on $S_{u_0 , \underline{u}_0}$ $\gamma_{AB}$, $\zeta_A$, $ \slashed{\mathrm{tr}} \chi$ and $\slashed{\mathrm{tr}} \underline{\chi}$, and the conformal factor $\Phi$ for
$$ \gamma_{AB} \doteq \Phi^2 \widetilde{\gamma}_{AB} , $$
on $H_{u_0}$ and $\underline{H}_{v_0}$, where $\widetilde{\gamma}$ is the standard round metric of radius $v_0$ on $S_{u_0 , v_0}$.

For the data on $\underline{H}_{v_0}$ the construction is given in \cite{weaknull} in the relevant section and we will not repeat it here.

On $H_{u_0}$ we set
$$ \left. \widetilde{\gamma}_{AB} \right|_{H_{u_0}} = \frac{\mathrm{m}_{AB}}{\left( 1+ \frac{1}{4} | \vartheta |^2 \right)^2} , $$
for $\mathrm{m}$ a $2\times2$ matrix with determinant equal to 1, which we can ensure by writing $\mathrm{m} \doteq e^{\widetilde{\Phi}}$ for $\widetilde{\Phi}$ a $2\times2$ matrix with eigenvalues that add to 0. On $H_{u_0}$ we require $\widetilde{\Phi}$ to satisfy the following bounds:
$$ \sum_{i \leq I} \left| \frac{\partial^i}{\partial \vartheta^i} \frac{\partial \widetilde{\Phi}}{\partial v} \right| (u_0 , v)  \lesssim \frac{1}{v^{1+\epsilon'}} , \quad \sum_{i \leq I} \left|  \frac{\partial^i }{\partial \vartheta^i} \widetilde{\Phi}\right| (u_0 , v) \lesssim 1 \mbox{  for $v \in [ v_0 , \infty)$,} $$
for $I$ big enough and for some $\epsilon' > 0$. More precisely we also require
$$ \left| \frac{\partial \widetilde{\Phi}}{\partial v} \right| (u_0 , v) \sim \frac{1}{v^{1+\epsilon'}} \mbox{  for $v \in [ v_0 , \infty)$,} $$
which implies that
$$ | \hat{\chi} |^2_{\gamma} = \frac{1}{4} \widetilde{\gamma}^{AC} \widetilde{\gamma}^{BD} \frac{\partial \widetilde{\gamma}_{AB}}{\partial v} \frac{\partial \widetilde{\gamma}_{CD}}{\partial v}  \sim \frac{1}{v^{2+2\epsilon}} \mbox{  for $v \in [ v_0 , \infty)$.} $$

On $H_{u_0}$ we further set $\Omega =  1$ and $b^A = 0$, $A \in \{1,2\}$. Then this implies that
$$ \slashed{\mathcal{L}}_{\partial / \partial_v}  \gamma_{AB} = 2 \chi_{AB} \mbox{  and  } \slashed{\mathcal{L}}_{\partial / \partial_v} \slashed{\mathrm{tr}} \chi = - \frac{1}{2} ( \slashed{\mathrm{tr}} \chi )^2 - | \hat{\chi} |^2 \mbox{  on $H_{u_0}$.} $$
The second equation implies the following equation for $\Phi$:
\begin{equation}\label{eq:Phic}
 \frac{\partial^2 \Phi}{\partial v^2} + \frac{1}{8} \widetilde{\gamma}^{AC} \widetilde{\gamma}^{BD} \frac{\partial \widetilde{\gamma}_{AB}}{\partial v} \frac{\partial \widetilde{\gamma}_{CD}}{\partial v} \cdot \Phi = 0 ,
 \end{equation}
with initial data
$$ \left. \Phi \right|_{S_{u_0, v_0}} = 1 , \quad \left. \frac{\partial \Phi}{\partial v} \right|_{S_{u_0,v_0}} = \frac{1}{2} \slashed{\mathrm{tr}} \chi = \frac{1}{v_0} . $$
Finally we set data for $\zeta$:
$$ \sum_{i \leq I-1} \left. \left| \frac{\partial^i \zeta}{\partial \vartheta^i} \right|^2_{\gamma} \right|_{S_{u_0 , v_0}} \lesssim \frac{1}{v_0^4} , $$
and we note that $\hat{\chi}$ and $\slashed{\mathrm{tr}} \chi$ can be expressed in terms of $\Phi$ as:
$$ \hat{\chi}_{AB} = \frac{1}{2} \Phi^2 \frac{\partial \widetilde{\gamma}_{AB}}{\partial v} , \quad \slashed{\mathrm{tr}} \chi = \frac{2}{\Phi} \frac{\partial \Phi}{\partial v} . $$

Using equation \eqref{eq:Phic} and the data on $S_{u_0 , v_0}$ for $\widetilde{\Phi}$ we get that:
$$ \Phi (u_0 , v) = \frac{v}{v_0} + O ( v^{1-\epsilon'} ) , $$
$$ \frac{\partial \Phi}{\partial v} = \frac{1}{v_0} + O (v_0^{-1-\epsilon'} ) , $$
which implies that
$$ \sum_{i=0}^I |( v \slashed{\nabla} )^i \hat{\chi} | ( u_0 , v) \lesssim \frac{1}{v^{1+\epsilon}} , $$
$$ \sum_{i=0}^I \left| ( v \slashed{\nabla} )^i \left( \slashed{\mathrm{tr}} \chi ( u_0 ,v) - \frac{2}{v} \right) \right| \lesssim \frac{1}{v^{1+\epsilon}} , $$
for all $v \in [v_0 , \infty )$, from the results of Chapter 2 of \cite{DC09} due to the assumptions on $\widetilde{\Phi}$. Moreover from the same results and equation 
$$ \mathcal{L}_{\partial / \partial v} \zeta + \slashed{\mathrm{tr}} \chi \zeta = \slashed{\mathrm{div}} \chi - \slashed{\nabla} \slashed{\mathrm{tr}} \chi , $$
which holds on $H_{u_0}$ according to our assumptions, we get that
$$ \sum_{i=0}^{I-1} \left| ( v \slashed{\nabla} )^i \zeta \right| \lesssim \frac{1}{v^{1+\epsilon'}} , $$
while from the implied estimate for $\gamma$ and $\gamma^{-1}$ we get the following estimate for the Gaussian curvature:
$$ \sum_{i \leq I-2} \left| ( v \slashed{\nabla} )^i K \right| \lesssim \frac{1}{v^{1+\epsilon'}} , $$
for all $v \in [ v_0 , \infty)$. Finally we can also get the following bounds for $\slashed{\mathrm{tr}} \underline{\chi}$:
$$ \sum_{i \leq I-2} \left| ( v \slashed{\nabla} )^i \left( \slashed{\mathrm{tr}} \underline{\chi} + \frac{2}{v} \right) \right|  \lesssim \frac{1}{v^{1+\epsilon'}} , $$
for all $v \in [ v_0 , \infty)$ by making use of the equation
$$ \mathcal{L}_{\partial / \partial v} \slashed{\mathrm{tr}} \underline{\chi} + \slashed{\mathrm{tr}} \chi \slashed{\mathrm{tr}} \underline{\chi} = -2 K -2 \slashed{\mathrm{div}} \zeta + 2 | \zeta |^2 , $$
which holds on $H_{u_0}$ according to our assumptions.

\section{Basic inequalities}\label{bi}
We will present some basic inequalities that we will use throughout the proofs of Theorems \ref{thm:main} and \ref{thm:main2}. For all of them we make use of the assumptions of Theorems \ref{thm:main} and \ref{thm:main2}. We start with a basic transport estimate in the $v$ direction.
\begin{proposition}\label{est:basic1}
Assume that the bootstrap assumptions \eqref{bootstraps} or \eqref{bootstraps2} hold true for all $\epsilon < \sigma$ for some $\sigma > 0$ small enough. Then we have that for $2 \leq p \leq \infty$ and any tensor field $F$ of arbitrary rank tangential to the spheres $S_{u,v}$ that satisfies
\begin{equation}\label{eq:gen}
\slashed{\nabla}_4 F + \lambda_0 \slashed{\mathrm{tr}} \chi F = G ,
\end{equation}
 the following hold for any $0 \leq v_0 \leq v$, and any $u \geq u_0 \geq 0$:
\begin{equation}\label{est:b4p}
v^{\lambda_1} \| F \|_{L^p (S_{u,v} )}  \lesssim \| F \|_{L^p (S_{u,v_0} )} + \int_{v_0}^{v} (v' )^{\lambda_1} \| G \|_{L^p (S_{u,v'} )} \, dv' ,  
\end{equation}
for $0 \leq \lambda_1 \leq 2 \left( \lambda_0 - \frac{1}{p} \right)$. 
\end{proposition}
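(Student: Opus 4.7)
The plan is to reduce the statement to a scalar ODE in $v$ for $h(v) \doteq v^{\lambda_1}\|F\|_{L^p(S_{u,v})}$ and then apply an integrating factor argument together with a Gronwall absorption of the bootstrap-controlled error terms. First I would compute the $v$-derivative of $\|F\|_{L^p(S_{u,v})}^p$. Using the identity $\partial_v\sqrt{\det\gamma} = (\slashed{\mathrm{tr}}\chi - \slashed{\mathrm{div}}\, b)\sqrt{\det\gamma}$ coming from equation \eqref{eq:lgamma}, together with the divergence theorem on the closed surface $S_{u,v}$ (which kills the $b$-contribution since it appears as $\slashed{\mathrm{div}}(|F|^p b)$), one gets the clean identity
$$ \frac{d}{dv}\int_{S_{u,v}} |F|^p \, d\mu_S = \int_{S_{u,v}}\bigl[e_4(|F|^p) + \slashed{\mathrm{tr}}\chi\,|F|^p\bigr]\, d\mu_S . $$
Expanding $e_4(|F|^p) = p\,|F|^{p-2}\langle F, \slashed{\nabla}_4 F\rangle_\gamma$ modulo lower-order tensorial corrections of the schematic form $\chi\cdot |F|^p$ (coming from $\slashed{\nabla}_4\gamma$ not vanishing pointwise), and substituting the transport equation $\slashed{\nabla}_4 F = G - \lambda_0\slashed{\mathrm{tr}}\chi\, F$, one obtains
$$ \frac{d}{dv}\|F\|_{L^p(S_{u,v})}^p = p\!\int_{S_{u,v}}\! |F|^{p-2}\langle F,G\rangle_\gamma\, d\mu_S + (1 - p\lambda_0)\!\int_{S_{u,v}}\!\slashed{\mathrm{tr}}\chi\,|F|^p\, d\mu_S + \mathcal{E}, $$
where $\mathcal{E}$ is the error from the $\chi$-correction terms above. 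By H\"older's inequality the first term is bounded by $p\,\|F\|_{L^p(S_{u,v})}^{p-1}\|G\|_{L^p(S_{u,v})}$, and dividing through by $p\|F\|_{L^p(S_{u,v})}^{p-1}$ produces the scalar inequality
$$ \frac{d}{dv}\|F\|_{L^p(S_{u,v})} \le \|G\|_{L^p(S_{u,v})} + \left(\tfrac{1}{p} - \lambda_0\right)\|\slashed{\mathrm{tr}}\chi\|_{L^\infty(S_{u,v})}\|F\|_{L^p(S_{u,v})} + \widetilde{\mathcal{E}}. $$

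Next I would invoke the bootstrap assumptions \eqref{bootstraps}/\eqref{bootstraps2}, which in particular control $\slashed{\mathrm{tr}}\chi - \tfrac{2}{v}$ in an integrable norm in $v$ (see $\mathcal{I}_2$/$\mathcal{I}_2^2$). Writing $\slashed{\mathrm{tr}}\chi = \tfrac{2}{v} + E(u,v)$ with $\int_{v_0}^{\infty}\|E\|_{L^\infty(S)}\,dv \lesssim \sigma$, and similarly bounding $\widetilde{\mathcal{E}}$ by $\tfrac{C\sigma}{v}\|F\|_{L^p(S_{u,v})}$ after using the bootstrap control on $\hat\chi$, the inequality becomes
$$ \frac{d}{dv}\|F\|_{L^p(S_{u,v})} \le \|G\|_{L^p(S_{u,v})} - \frac{2\left(\lambda_0 - \tfrac{1}{p}\right)}{v}\|F\|_{L^p(S_{u,v})} + \mathcal{R}(v)\|F\|_{L^p(S_{u,v})}, $$
with $\mathcal{R}\in L^1([v_0,\infty))$ and $\int_{v_0}^{\infty}\mathcal{R}\,dv \lesssim \sigma$. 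Multiplying by the integrating factor $v^{\lambda_1}$ and using the assumption $\lambda_1 \le 2(\lambda_0 - \tfrac1p)$ to ensure the coefficient of $\|F\|_{L^p}/v$ has the favorable sign, I would obtain
$$ \frac{d}{dv}\bigl(v^{\lambda_1}\|F\|_{L^p(S_{u,v})}\bigr) \le v^{\lambda_1}\|G\|_{L^p(S_{u,v})} + \mathcal{R}(v)\cdot v^{\lambda_1}\|F\|_{L^p(S_{u,v})}. $$
A standard Gronwall argument on $[v_0,v]$ then yields the desired bound with implicit constant $\exp\!\bigl(\int_{v_0}^{\infty}\mathcal{R}\bigr)\lesssim 1$, which is finite uniformly in $v$ since $\mathcal{R}$ is $L^1_v$.

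The main obstacle is the careful bookkeeping of the two sign-indefinite contributions to the ODE for $\|F\|_{L^p}$: the positive factor $\tfrac{1}{p}\slashed{\mathrm{tr}}\chi$ coming from the volume-form derivative, and the negative $\lambda_0\slashed{\mathrm{tr}}\chi$ supplied by \eqref{eq:gen}, whose difference is exactly what produces the optimal weight range $\lambda_1 \le 2(\lambda_0 - \tfrac{1}{p})$; any mistake of a factor of $2$ (from the $-2\Omega^2$ in $g(L,\underline{L})$) or $p$ would alter this range. A secondary technical point is justifying that the tensorial corrections in $e_4(|F|^p)$ (the non-vanishing of $\slashed{\nabla}_4\gamma$ on tensors) indeed fall into the integrable-in-$v$ error $\mathcal{R}$, which uses the bootstrap control on $\hat\chi$ pointwise on the spheres. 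Once these are handled, the Gronwall step is routine and the endpoint case $\lambda_1 = 2(\lambda_0 - \tfrac{1}{p})$ is covered because the $L^1_v$ character of $\mathcal{R}$ provides slack that does not require strict inequality.
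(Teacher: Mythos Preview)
Your proposal is correct and follows essentially the same approach as the paper: differentiate the $L^p$-norm on spheres, substitute the transport equation, split $\slashed{\mathrm{tr}}\chi = \tfrac{2}{v} + (\slashed{\mathrm{tr}}\chi - \tfrac{2}{v})$, use the sign condition $\lambda_1 \le 2(\lambda_0 - \tfrac{1}{p})$ to drop the main term, and absorb the bootstrap-controlled remainder. The paper works directly with $\int_{S_{u,v}} v^{\lambda_1 p}|F|^p$ rather than dividing through to get a scalar ODE for $\|F\|_{L^p}$, but this is cosmetic; your Gronwall step is in fact more explicit than the paper's terse ``the desired estimate follows.'' One small correction: your worry about ``tensorial corrections from $\slashed{\nabla}_4\gamma$ not vanishing'' is unnecessary, since $\slashed{\nabla}_4$ is by construction compatible with $\gamma$, so $e_4(|F|^p) = p|F|^{p-2}\langle F,\slashed{\nabla}_4 F\rangle_\gamma$ holds exactly and the error term $\mathcal{E}$ is actually zero.
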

\begin{proof}
We differentiate the quantity we want to bound in $v$ and we have that:
\begin{align*}
\frac{d}{dv} \int_{S_{u,v}} v^{\lambda_1 p} | F |^p = & \int_{S_{u,v}} \left( \lambda_1 p v^{\lambda_1 p - 1} | F |^p + p v^{\lambda_1} | F |^{p-2} \langle F , e_4 (F ) \rangle + \slashed{\mathrm{tr}} \chi v^{\lambda_1 p} | F |^p \right) \\ = & \int_{S_{u,v}} \left( \lambda_1 p v^{\lambda_1 p - 1} | F |^p + ( 1 - p \lambda_0 ) \slashed{\mathrm{tr}} \chi v^{\lambda_1 p} | F |^p  \right) + \int_{S_{u,v}} p v^{\lambda_1 p} | F |^{p-2} \langle F , G \rangle , 
\end{align*}
and this implies that
\begin{align*}
\int_{S_{u,v_2}} v_2^{\lambda_1 p} | F |^p \lesssim & \int_{S_{u,v_1}} v_1^{\lambda_1 p} | F |^p + \int_{v_1}^{v_2} \int_{S_{u,v}} \left| \slashed{\mathrm{tr}} \chi - \frac{2}{v} \right| | F |^p \\ & + \int_{v_1}^{v_2} \int_{S_{u,v}} \frac{C(p, \lambda_0 , \lambda_1 )}{v} v^{\lambda_1 p} | F |^p + \int_{v_1}^{v_2}  \int_{S_{u,v}} p v^{\lambda_1 p} | F |^{p-2} \langle F , G \rangle ,
\end{align*}
and the desired estimate follows since $C(p, \lambda_0 , \lambda_1 ) \leq 0$ if $\lambda_1 \leq 2 \left( \lambda_0 - \frac{1}{p} \right)$ and by applying H\"{o}lder's inequality to the last term.
\end{proof}
Next we state a transport estimate in the $u$ direction. We omit the proof as it is just an application of the fundamental theorem of calculus.
\begin{proposition}\label{est:basic2}
Assume that the bootstrap assumptions \eqref{bootstraps} or \eqref{bootstraps2} hold true for all $\epsilon < \sigma$ for some $\sigma > 0$ small enough. Then for every $2 \leq p \leq \infty$ we have that
\begin{equation}\label{est:b3p}
\| f \|_{L^p (S_{u_2 , v })} \lesssim \| f \|_{L^p (S_{u_1 , v })} + \int_{u_1}^{u_2} \| \slashed{\nabla}_3 f \|_{L^p (S_{u' , v } )} \, du' ,
\end{equation}
for all $u_0 \leq u_1 \leq u_2 \leq \infty$, $v \in [ v_0 , \infty)$. 
\end{proposition}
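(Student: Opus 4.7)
The plan is to differentiate the quantity $\|f\|_{L^p(S_{u,v})}^p = \int_{S_{u,v}} |f|_\gamma^p\, d\mu_\gamma$ with respect to $u$ (with $v$ fixed) and then close the estimate via Gr\"onwall. Since $e_3 = \Omega^{-2} \partial/\partial u$, the $u$-derivative is just $\Omega^2$ times a covariant $e_3$-derivative, which will produce $\slashed{\nabla}_3 f$ (the tangential projection) plus error terms coming from the evolution of $\gamma_{AB}$ along $\partial/\partial u$. Equation \eqref{eq:ulgamma}, $\partial_u \gamma_{AB} = 2\Omega^2 \underline{\chi}_{AB}$, implies
\begin{equation*}
\frac{\partial}{\partial u}\sqrt{\det\gamma} = \Omega^2\, \slashed{\mathrm{tr}}\underline{\chi}\, \sqrt{\det\gamma}, \qquad \frac{\partial}{\partial u}(\gamma^{-1})^{AB} = -2\Omega^2 \underline{\chi}^{AB},
\end{equation*}
so expanding $|f|_\gamma^p$ in coordinates and using that $f$ is tangential to $S_{u,v}$ (hence the normal pieces of $D_{e_3}f$ drop out upon contraction against $f$), one obtains
\begin{equation*}
\frac{d}{du}\int_{S_{u,v}} |f|_\gamma^p\, d\mu_\gamma = \int_{S_{u,v}} \Omega^2\, \mathcal{E}\, |f|_\gamma^p\, d\mu_\gamma + p\int_{S_{u,v}} \Omega^2\, |f|_\gamma^{p-2}\, \langle f, \slashed{\nabla}_3 f\rangle_\gamma\, d\mu_\gamma,
\end{equation*}
where $\mathcal{E}$ is a linear combination of $\slashed{\mathrm{tr}}\underline{\chi}$ and components of $\hat{\underline{\chi}}$ with coefficients depending only on $p$ and the rank of $f$.

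Next I would invoke the bootstrap assumptions to control $\Omega$ and $\mathcal{E}$ in $L^\infty$. Since $U-u_0 \leq \epsilon$ is small, $\Omega$ remains comparable to $1$, and the quantities appearing in $\mathcal{E}$ (in particular $\slashed{\mathrm{tr}}\underline{\chi} + 2/v$ and $\hat{\underline{\chi}}$) are bounded uniformly in $u$ for fixed $v$ by the bootstrap assumptions, so $\|\Omega^2 \mathcal{E}\|_{L^\infty(S_{u,v})} \lesssim 1$. Applying H\"older's inequality to the second term yields
\begin{equation*}
\frac{d}{du}\|f\|_{L^p(S_{u,v})}^p \lesssim \|f\|_{L^p(S_{u,v})}^p + p\|f\|_{L^p(S_{u,v})}^{p-1}\,\|\slashed{\nabla}_3 f\|_{L^p(S_{u,v})},
\end{equation*}
which, after dividing by $p\|f\|_{L^p}^{p-1}$, gives $\frac{d}{du}\|f\|_{L^p(S_{u,v})} \lesssim \|f\|_{L^p(S_{u,v})} + \|\slashed{\nabla}_3 f\|_{L^p(S_{u,v})}$. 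Gr\"onwall on $[u_1, u_2] \subset [u_0, U]$ together with the smallness of $U-u_0$ then yields \eqref{est:b3p}, since the exponential factor is $O(1)$.

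The case $p=\infty$ can either be obtained by passing to the limit $p\to\infty$ in the finite-$p$ estimate (using that the implicit constants are uniform in $p$, which is the case because $C(p,\lambda_0,\lambda_1)$ from the analogous computation in Proposition \ref{est:basic1} has the right sign for large $p$), or argued directly by integrating the pointwise identity $\frac{d}{du}|f|_\gamma^2 = 2\Omega^2 \langle f, \slashed{\nabla}_3 f\rangle_\gamma - 2\Omega^2 \underline{\chi}^{AB}(\cdots)$ along the integral curves of $\partial/\partial u$ (which are well defined since $\theta^A$ are transported trivially by $\underline{L}$). The only mild subtlety, and hence the main point to watch, is ensuring that $\mathcal{E}$ is indeed controlled in $L^\infty$ uniformly in $(u,v)$ by the bootstrap assumptions; this is where the specific structure of the assumptions (in particular the pointwise control of $\slashed{\mathrm{tr}}\underline{\chi}+2/v$ and $\hat{\underline{\chi}}$ after using Sobolev embedding on $S_{u,v}$) is used.
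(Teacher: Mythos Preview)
Your approach is essentially what the paper has in mind: the paper simply says the result ``is just an application of the fundamental theorem of calculus'' and omits the proof, so differentiating $\|f\|_{L^p(S_{u,v})}^p$ in $u$, picking up $\slashed{\nabla}_3 f$ and metric-variation terms from \eqref{eq:ulgamma}, and closing by Gr\"onwall is exactly the intended argument.

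There is, however, one inaccuracy worth flagging. You assert that $\hat{\underline{\chi}}$ and $\slashed{\mathrm{tr}}\underline{\chi}+\frac{2}{v}$ are bounded in $L^\infty(S_{u,v})$ \emph{uniformly in $u$} by the bootstrap assumptions. Under \eqref{bootstraps} this is false: the whole point of Theorem~\ref{thm:main} is that $\hat{\underline{\chi}}$ may blow up as $u\to U$ (the weak null singularity), and the bootstrap only controls $\|\mathrm{w}(u)(v\slashed{\nabla})^i\hat{\underline{\chi}}\|_{L^2_u L^\infty_v L^2(S)}$. What \emph{is} true, and is all you need, is that $\int_{u_1}^{u_2}\|\mathcal{E}\|_{L^\infty(S_{u,v})}\,du$ is bounded (in fact small): by Sobolev on the spheres and Cauchy--Schwarz in $u$,
\[
\int_{u_1}^{u_2}\|\hat{\underline{\chi}}\|_{L^\infty(S_{u,v})}\,du \lesssim \Bigl(\int_{u_1}^{u_2}\frac{du}{\mathrm{w}^2(u)}\Bigr)^{1/2}\Bigl(\int_{u_1}^{u_2}\mathrm{w}^2(u)\sum_{k\le 2}\|(v\slashed{\nabla})^k\hat{\underline{\chi}}\|_{L^2(S_{u,v})}^2\,du\Bigr)^{1/2}\lesssim \epsilon^{1/2},
\]
and similarly for $\slashed{\mathrm{tr}}\underline{\chi}$. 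With this correction the Gr\"onwall factor is $\exp\bigl(C\epsilon^{1/2}\bigr)=O(1)$ and your argument goes through unchanged.
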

Moreover we will need the following Sobolev inequalities, the proofs of them can be found in \cite{iwaves2}.
\begin{proposition}\label{sobolev}
Assume that the bootstrap assumptions \eqref{bootstraps} or \eqref{bootstraps2} hold true. Then we have for all $\epsilon \leq \delta$ where $\delta$ is sufficiently small, that the following inequalities hold for all $u_0 \leq u \leq U$ and all $v_0 \leq v < \infty$:
\begin{equation}\label{sobolev1}
\| f \|_{L^4 (S_{u,v})} \leq C \frac{1}{\sqrt{v}} \sum_{k=0}^1 \| ( v \slashed{\nabla} )^k f \|_{L^2 (S_{u,v} )} ,
\end{equation}
\begin{equation}\label{sobolev2}
\| f \|_{L^{\infty} (S_{u,v})} \leq C \left( \| f \|_{L^2 (S_{u,v})} +  \frac{1}{\sqrt{v}} \| ( v \slashed{\nabla} ) f \|_{L^4 (S_{u,v} )} \right) ,
\end{equation}
\begin{equation}\label{sobolev3}
\| f \|_{L^{\infty} (S_{u,v})} \leq C \sum_{k=0}^2  \frac{1}{v} \| ( v\slashed{\nabla} )^k f \|_{L^2 (S_{u,v} )} ,
\end{equation}
for some constant $C$.
\end{proposition}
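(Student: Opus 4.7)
The plan is to reduce each of \eqref{sobolev1}--\eqref{sobolev3} to the standard scale-invariant Sobolev embeddings on a fixed round 2-sphere by rescaling the induced metric. Concretely, on each section $S_{u,v}$ the parameter $v$ is constant, so I set $\widetilde{\gamma}_{AB} \doteq v^{-2}\gamma_{AB}$, a pointwise constant conformal rescaling whose Levi-Civita connection coincides with $\slashed{\nabla}$. The scalings to keep track of are $d\mu_{S_{u,v}} = v^{2}\, d\mu_{(S^{2},\widetilde{\gamma})}$ and, for a rank-$k$ tangential tensor $F$, the relation $|F|_{\gamma} = v^{-k}|F|_{\widetilde{\gamma}}$ from raising indices. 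Combining these gives
\begin{equation*}
\| F \|_{L^{p}(S_{u,v})} = v^{-k+2/p}\|F\|_{L^{p}(S^{2},\widetilde{\gamma})}, \qquad \| (v\slashed{\nabla})^{j}F \|_{L^{p}(S_{u,v})} = v^{-k+2/p}\|\widetilde{\slashed{\nabla}}^{\,j}F\|_{L^{p}(S^{2},\widetilde{\gamma})},
\end{equation*}
so that the $v$-weights in the proposition are precisely the ones forced by demanding that the corresponding estimate on $(S^{2},\widetilde{\gamma})$ be scale-invariant.

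The first step is to show, using the bootstrap assumptions, that $\widetilde{\gamma}$ is uniformly comparable on $S_{u,v}$ to a fixed smooth metric on $S^{2}$ (say, the unit round metric), with uniform control on the isoperimetric constant and on finitely many coordinate derivatives. The initial data assumptions of Theorems \ref{thm:main} and \ref{thm:main2} give this comparability on $\underline{H}_{v_{0}}$ (via $c_{1}\leq\det\gamma\leq c_{2}$ and the bounds on $(\partial/\partial\theta)^{k}\gamma$) and on $H_{u_{0}}$ after rescaling by $v^{-2}$ (via $\det\gamma\geq c_{3}v^{4}$ and the bounds $|(\partial/\partial\theta)^{k}\gamma|\leq c_{5}v^{2}$). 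Propagating into the spacetime is done by integrating the transport equations \eqref{eq:lgamma}--\eqref{eq:ulgamma} in $u$ and in $v$: the bootstrap assumptions provide the $L^{\infty}$ control on $\widehat{\chi}$, $\slashed{\mathrm{tr}}\chi - 2/v$, $\widehat{\underline{\chi}}$, $\slashed{\mathrm{tr}}\underline{\chi} + 2/v$, and $\Omega$ required to conclude that $|\widetilde{\gamma} - \widetilde{\gamma}_{\mathrm{init}}|$ remains as small as we please on any $S_{u,v}$, provided $\epsilon<\sigma$ with $\sigma$ small. The same scheme differentiated in angular coordinates and using the bootstrap bounds on $\slashed{\nabla}^{j}\chi,\slashed{\nabla}^{j}\underline{\chi}$ for $j\leq 3$ gives uniform control on the coordinate derivatives of $\widetilde{\gamma}$.

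With uniform $C^{3}$ control of $\widetilde{\gamma}$, the classical Sobolev embeddings on the 2-sphere apply with a constant depending only on the comparability bounds: $\|\widetilde{F}\|_{L^{4}(\widetilde{S})}\lesssim \|\widetilde{F}\|_{L^{2}(\widetilde{S})} + \|\widetilde{\slashed{\nabla}}\widetilde{F}\|_{L^{2}(\widetilde{S})}$, $\|\widetilde{F}\|_{L^{\infty}(\widetilde{S})}\lesssim \|\widetilde{F}\|_{L^{2}(\widetilde{S})} + \|\widetilde{\slashed{\nabla}}\widetilde{F}\|_{L^{4}(\widetilde{S})}$, and $\|\widetilde{F}\|_{L^{\infty}(\widetilde{S})}\lesssim \sum_{j\leq 2}\|\widetilde{\slashed{\nabla}}^{\,j}\widetilde{F}\|_{L^{2}(\widetilde{S})}$ (the first by a standard Gagliardo--Nirenberg/isoperimetric argument, the $L^{\infty}$ ones by iterating Morrey's embedding after one more derivative). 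Inserting these into the scaling identities above produces \eqref{sobolev1}, \eqref{sobolev2}, \eqref{sobolev3}. The constants $C$ so obtained depend only on $c_{1},\dots,c_{5}$ and on $\sigma$, not on $u$ or $v$.

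The main obstacle is avoiding circularity: several of the $L^{\infty}$ bounds on Ricci coefficients used in the comparability step are themselves typically recovered in the larger bootstrap scheme by appealing to exactly the Sobolev inequalities being proved. The resolution, as in \cite{iwaves2}, is to arrange the bootstrap assumptions \eqref{bootstraps}/\eqref{bootstraps2} so that the raw pointwise control on $\chi$, $\underline{\chi}$, and $\Omega$ needed here is postulated directly (and recovered a posteriori), while the finer $L^{\infty}$--type estimates obtained by combining transport equations with Sobolev are closed later. Under the smallness hypothesis $U - u_{0}\leq\epsilon$ and the weighted smallness of $\hat{\underline{\chi}}$ in $u$, the change of $\widetilde{\gamma}$ from its data stays small in $C^{3}$, which is all that is needed.
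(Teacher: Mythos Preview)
Your approach is correct and is precisely the one the paper defers to in \cite{iwaves2}: rescale $\gamma$ by $v^{-2}$ so that the resulting metric on $S_{u,v}$ is uniformly comparable (via the bootstrap control on $\chi$, $\underline{\chi}$, $\Omega$ propagated from the initial hypersurfaces) to a fixed metric on $S^{2}$, apply the standard Sobolev embeddings there, and read off the $v$-weights from the scaling identities you wrote down. Your discussion of the potential circularity and its resolution through the structure of the bootstrap assumptions is also accurate and matches how these arguments are organized in \cite{iwaves2} and in the metric-component estimates at the start of Section~\ref{p1}.
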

We will also use the following two Lemmas that contain estimates for div-curl systems (see \cite{jonathanan} for a proof).
\begin{proposition}\label{ellipticcurl}
We consider an 1-form $f$ on $S_{u , v}$ with metric $\gamma$ for some $u$, $v$, that satisfies the following equations:
$$ \slashed{\mathrm{div}} f = F, \quad \slashed{\mathrm{curl}} f = G . $$
Assuming that
$$ \sum_{i=0}^1 \| v  ( v \slashed{\nabla} )^i K \|_{L^2 (S_{u , v} )} < \infty , $$
we then have that:
\begin{equation}\label{est:ellipticcurl}
\| ( v \slashed{\nabla} )^k f \|_{L^2 (S_{u , v} )} \lesssim  \left( \sum_{i=0}^1 \| v ( v \slashed{\nabla} )^i K \|_{L^2 (S_{u , v} )} \right) \left[ \sum_{m=0}^{k-1} \left(  \| v ( v \slashed{\nabla} )^m F \|_{L^2 (S_{u , v} )}  + \| v ( \underline{u} \slashed{\nabla} )^m G \|_{L^2 (S_{u , v} )} \right)  + \| f \|_{L^2 (S_{u ,v} )} \right] .
\end{equation}
\end{proposition}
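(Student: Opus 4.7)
The plan is to argue by induction on $k$, with the base case $k=1$ following from a Bochner identity adapted to the Hodge-system $(\slashed{\mathrm{div}}, \slashed{\mathrm{curl}})$ on the $2$-sphere $S_{u,v}$, and the higher-order estimates obtained by commuting $v\slashed{\nabla}$ through the system and iterating. The case $k=0$ is trivial and is absorbed into the $\|f\|_{L^2(S_{u,v})}$ term on the right-hand side.

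For the base case $k=1$, I start from the identity
\[
\int_{S_{u,v}} \bigl( |\slashed{\mathrm{div}} f|^2 + |\slashed{\mathrm{curl}} f|^2 \bigr) \, d\mu_S = \int_{S_{u,v}} |\slashed{\nabla} f|^2 \, d\mu_S + \int_{S_{u,v}} K |f|^2 \, d\mu_S,
\]
which follows by integration by parts together with the commutator $[\slashed{\nabla}_A, \slashed{\nabla}_B] f_C = K(\gamma_{BC} f_A - \gamma_{AC} f_B)$ valid on a $2$-surface of Gaussian curvature $K$. Multiplying by $v^2$ and decomposing $v^2 K = 1 + v^2\bigl(K - \tfrac{1}{v^2}\bigr)$ extracts a comparison term $\|f\|_{L^2(S_{u,v})}^2$ whose coefficient is of unit size, while the remaining error is estimated by H\"older and the Sobolev embedding \eqref{sobolev1} of Proposition \ref{sobolev}:
\[
v^2 \int_{S_{u,v}} \bigl(K - \tfrac{1}{v^2}\bigr) |f|^2 \,d\mu_S \lesssim \bigl\| v\bigl(K - \tfrac{1}{v^2}\bigr) \bigr\|_{L^2(S_{u,v})} \cdot \bigl( \|f\|_{L^2(S_{u,v})}^2 + \|v\slashed{\nabla} f\|_{L^2(S_{u,v})}^2 \bigr).
\]
The bootstrap assumptions force the factor $\|v(K-1/v^2)\|_{L^2(S)}$ to be small, so the $\|v\slashed{\nabla} f\|^2$ piece is absorbed to the left; the remaining terms produce the claimed $k=1$ inequality.

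For the inductive step, applying $v\slashed{\nabla}$ to the equations yields
\[
\slashed{\mathrm{div}}(v\slashed{\nabla} f) = v\slashed{\nabla} F + [v\slashed{\nabla}, \slashed{\mathrm{div}}] f, \qquad \slashed{\mathrm{curl}}(v\slashed{\nabla} f) = v\slashed{\nabla} G + [v\slashed{\nabla}, \slashed{\mathrm{curl}}] f,
\]
with each commutator of schematic form $vK \cdot f$. Treating $v\slashed{\nabla} f$ as a new tensor field and applying the $k=1$ estimate (the argument extends mutatis mutandis to higher-rank tensors), the extra contributions from the commutators are handled by placing $K$ (and one derivative thereof) in $L^2(S_{u,v})$ and pushing the remaining derivatives onto $f$ via Proposition \ref{sobolev}, which produces precisely the factor $\sum_{i=0}^1 \|v(v\slashed{\nabla})^i K\|_{L^2(S_{u,v})}$ multiplying the lower-order terms. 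Iterating this procedure $k-1$ further times, and organizing the weights so that each derivative of $F$ or $G$ carries a factor of $v$, gives the stated estimate.

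The main obstacle is that, at the $k$-th order, the naive commutator expansion would require up to $k-1$ derivatives of $K$ on the right-hand side, whereas the statement of the proposition only provides control on $K$ through its first derivative. This forces a careful distribution of derivatives in each commutator term: $K$ (or $v\slashed{\nabla} K$) is always placed in $L^2(S_{u,v})$ while all surplus derivatives are transferred to $f$, with the resulting lower-order quantities $\|(v\slashed{\nabla})^j f\|_{L^2(S_{u,v})}$ for $j < k$ then controlled by the inductive hypothesis — the Sobolev inequalities of Proposition \ref{sobolev} being exactly what makes this redistribution lossless in the weights.
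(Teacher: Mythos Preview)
The paper does not give its own proof of this proposition; it simply cites \cite{jonathanan}. Your overall strategy --- the Bochner identity for the base case and induction on $k$ via commutation --- is the standard one and matches what is done in that reference, and your treatment of $k=1$ is correct.

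There is, however, a genuine gap in your resolution of what you call the ``main obstacle.'' After commuting, the schematic expansion of $[\slashed{\mathrm{div}},(v\slashed{\nabla})^{k-1}]f$ contains terms $\bigl((v\slashed{\nabla})^{a}K\bigr)\bigl((v\slashed{\nabla})^{k-2-a}f\bigr)$ for every $0\le a\le k-2$, so once $k\ge 4$ the term $\bigl((v\slashed{\nabla})^{2}K\bigr)\cdot f$ appears. Your claim that Sobolev lets you ``transfer surplus derivatives to $f$'' fails here: any H\"older splitting of $\bigl\|((v\slashed{\nabla})^{2}K)\,f\bigr\|_{L^{2}(S)}$ --- whether $L^{2}\times L^{\infty}$, $L^{4}\times L^{4}$, or $L^{\infty}\times L^{2}$ --- requires at least $\|(v\slashed{\nabla})^{2}K\|_{L^{2}(S)}$, which the hypothesis $\sum_{i=0}^{1}\|v(v\slashed{\nabla})^{i}K\|_{L^{2}(S)}<\infty$ does not supply. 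The Sobolev inequalities of Proposition~\ref{sobolev} go the wrong direction for this purpose: they exchange $L^{4}$ or $L^{\infty}$ control for \emph{more} derivatives in $L^{2}$, so they cannot move derivatives off $K$ and onto $f$.

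For $k\le 3$ your argument works exactly as written, since then $a\le 1$. For $k\ge 4$ the proposition as stated in the paper is in fact slightly imprecise on this point; in the paper's applications (e.g.\ the top-order estimates for $\hat{\chi}$, $\hat{\underline{\chi}}$ and $\omega$) one has, via the bootstrap assumptions \eqref{bootstraps} and the bound \eqref{bound:gauss}, control on $(v\slashed{\nabla})^{m}\bigl(K-\tfrac{1}{v^{2}}\bigr)$ in $L^{2}(S)$ for $m$ up to $3$, which is what actually closes the estimate. Your inductive scheme becomes correct once you allow the implicit constant to depend on these higher $K$-norms.
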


\begin{proposition}\label{elliptic}
For a symmetric traceless 2-tensor $f$ on $S_{u , v}$ with metric $\gamma$ for some $u$ and $v$ satisfying the equation
$$ \slashed{\mathrm{div}} f  = F $$
and assuming that
$$ \sum_{i=0}^1 \| v ( v \slashed{\nabla} )^i K \|_{L^2 (S_{u , v} )} < \infty , $$
 we then have that:
\begin{equation}\label{est:elliptic}
\|( v \slashed{\nabla} )^k f \|_{L^2 (S_{u , v } )} \lesssim \left( \sum_{i=0}^1 \| v ( v \slashed{\nabla} )^i K \|_{L^2 (S_{u , v} )} \right) ( \sum_{m=0}^{k-1} \| v ( v \slashed{\nabla} )^m F \|_{L^2 (S_{u , v} )} + \| f \|_{L^2 (S_{u , v} )} ) .
\end{equation} 

\end{proposition}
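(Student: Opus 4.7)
The plan is to prove this by a Bochner-type integration by parts on the 2-sphere $S_{u,v}$, combined with induction on $k$ and commutator identities. The core identity for a symmetric traceless 2-tensor $f$ on a 2-surface is
\begin{equation*}
\int_{S_{u,v}} |\slashed{\nabla} f|^2 \, d\mu_S + 2 \int_{S_{u,v}} K |f|^2 \, d\mu_S = 2 \int_{S_{u,v}} |\slashed{\mathrm{div}} f|^2 \, d\mu_S,
\end{equation*}
which is just the standard Bochner formula on a 2-manifold, where the Ricci curvature reduces to $K \gamma_{AB}$ and the trace-free condition kills the remaining algebraic term. Multiplying by $v^2$ and rearranging yields the base estimate $\|v \slashed{\nabla} f\|_{L^2(S_{u,v})}^2 \leq 2 \|v F\|_{L^2(S_{u,v})}^2 + 2 \int_{S_{u,v}} v^2 |K| |f|^2$.

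For the base case $k=1$, I would estimate the curvature term by H\"older and the Sobolev embedding \eqref{sobolev1}:
\begin{equation*}
\int_{S_{u,v}} v^2 |K| |f|^2 \leq \| v K \|_{L^2(S_{u,v})} \, \| v^{1/2} f \|_{L^4(S_{u,v})}^2 \lesssim \| v K \|_{L^2(S_{u,v})} \left( \| f \|_{L^2(S_{u,v})}^2 + \| v \slashed{\nabla} f \|_{L^2(S_{u,v})}^2 \right).
\end{equation*}
The bootstrap assumptions of either Theorem \ref{thm:main} or Theorem \ref{thm:main2} ensure that $\| v K \|_{L^2(S_{u,v})}$ is not only finite but (via the renormalized quantity $K-\tfrac{1}{v^2}$) close to a fixed constant coming from the round metric; in particular the $\| v \slashed{\nabla} f \|_{L^2}^2$ piece can be absorbed on the left after multiplying by a small factor, completing the $k=1$ estimate.

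For the inductive step $k \mapsto k+1$, I would apply the Bochner identity to the tensor $(v \slashed{\nabla})^k f$ rather than to $f$ itself, and then commute the derivatives past $\slashed{\mathrm{div}}$. On a 2-surface the commutator $[\slashed{\nabla}, \slashed{\mathrm{div}}]$ acting on a symmetric traceless 2-tensor produces schematically terms of the form $K \cdot (v\slashed{\nabla})^{\leq k-1} f$ together with $\slashed{\nabla}^{\leq k-1} K \cdot f$. Hence $\slashed{\mathrm{div}}((v\slashed{\nabla})^k f) = (v\slashed{\nabla})^k F + \sum_{m=0}^{k-1} (v\slashed{\nabla})^{k-1-m} K \cdot (v\slashed{\nabla})^m f$, and each term in the sum is controlled by $\sum_{i=0}^{1} \| v (v\slashed{\nabla})^i K \|_{L^2(S_{u,v})}$ multiplied by lower-order norms of $f$ handled via the inductive hypothesis and the Sobolev embeddings of Proposition \ref{sobolev}. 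This is where the factor $\sum_{i=0}^{1} \| v (v\slashed{\nabla})^i K \|_{L^2(S_{u,v})}$ on the right-hand side comes in: after two derivatives the commutator requires only the first derivative of $K$, and higher derivatives of $f$ are recaptured from lower-order $\slashed{\mathrm{div}}$ estimates.

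The main technical obstacle will be the bookkeeping in the commutator terms at the top derivative level, where one must ensure that no product contains more than one factor carrying the highest number of derivatives. The standard remedy is to interpolate via Proposition \ref{sobolev}, placing the lower-order factor in $L^\infty$ (which costs at most two derivatives) and the higher-order factor in $L^2$; since only $k-1$ derivatives of $F$ and of $f$ appear inside the products, and the $K$-norm is assumed at the level of one derivative, the full estimate closes. A minor but necessary check is that the implicit constant in the Bochner identity is stable as the geometry of $S_{u,v}$ degrades in $v$, which follows from the assumed lower bound $\det \gamma |_{S_{u_0,v}} \geq c_3 v^4$ and the scaling of the norms by factors of $v$.
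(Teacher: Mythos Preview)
The paper does not supply its own proof of this proposition; it simply cites \cite{jonathanan} (An--Luk). Your approach via the Bochner identity
\[
\int_{S_{u,v}} |\slashed{\nabla} f|^2 + 2\int_{S_{u,v}} K |f|^2 = 2\int_{S_{u,v}} |\slashed{\mathrm{div}} f|^2
\]
for symmetric traceless 2-tensors, followed by induction on $k$ with commutator bookkeeping, is exactly the standard method used in that reference and in the Christodoulou--Klainerman tradition more broadly, so your proposal is on the right track and essentially matches what the cited proof does.

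One point deserves tightening. In your base step you propose to absorb the $\|v\slashed{\nabla} f\|_{L^2}^2$ contribution coming from $\int v^2 |K|\,|f|^2$ by invoking smallness of $\|vK\|_{L^2(S_{u,v})}$. But the proposition as stated assumes only finiteness of this norm, and under the bootstrap $\|vK\|_{L^2(S_{u,v})}$ is close to the round-sphere value, which is of order one, not small. The cleaner route (and the one taken in \cite{jonathanan}) is to observe that under the bootstrap assumptions $K$ is pointwise close to $1/v^2>0$, so the curvature term $2\int v^2 K|f|^2$ in the Bochner identity already has a favourable sign and can simply be dropped; no absorption is needed at the base level. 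The explicit multiplicative dependence on $\sum_{i\le 1}\|v(v\slashed{\nabla})^i K\|_{L^2}$ in \eqref{est:elliptic} then enters only through the commutator terms $[\slashed{\nabla}^j,\slashed{\mathrm{div}}]f$ in the inductive step, exactly as you outline. With that adjustment your sketch is correct.
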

The following two Propositions contain two basic computations through integration by parts, see \cite{iwaves2} for a proof.
\begin{proposition}\label{intbyparts}
Let $f_1$ and $f_2$ be two $m$-tensorfields. We have for any $u_1 < u_2$ and any $v_1 < v_2$ that
\begin{equation}\label{ibp1}
\int_{D_{u,v}} ( f_1 \slashed{\nabla}_4 f_2 + f_2 \slashed{\nabla}_4 f_1 ) = \int_{\underline{H}_{v_2} (u_1 , u_2)} f_1 f_2 - \int_{\underline{H}_{v_1} (u_1 , u_2)} f_1 f_2 + \int_{D_{u,v}} \left( 2\omega - \slashed{\mathrm{tr}} \chi \right) f_1 f_2 ,
\end{equation}
and that
\begin{equation}\label{ibp2}
\int_{D_{u,v}} ( f_1 \slashed{\nabla}_3 f_2 + f_2 \slashed{\nabla}_3 f_1 ) = \int_{H_{u_2} (v_1 , v_2 )} f_1 f_2 - \int_{H_{u_1} (v_1 , v_2 )} f_1 f_2 - \int_{D_{u,v}}  \slashed{\mathrm{tr}} \underline{\chi} f_1 f_2 .
\end{equation}
\end{proposition}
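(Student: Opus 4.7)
The plan is to reduce both identities to the fundamental theorem of calculus in the $v$-direction (respectively $u$-direction) after writing the spacetime integral in the coordinates $(u,v,\theta^1,\theta^2)$, and to identify the resulting volume-form derivative with the Ricci coefficients appearing in the right-hand side. First, since $\slashed{\nabla}_4$ and $\slashed{\nabla}_3$ are (by definition) the projections of $D_{e_4}$ and $D_{e_3}$ onto $TS_{u,v}$, and since $D$ is metric compatible while $e_3$, $e_4$ are orthogonal to each $S_{u,v}$, one has the Leibniz identities
\[
e_4\,\langle f_1,f_2\rangle_\gamma = \langle \slashed{\nabla}_4 f_1,f_2\rangle_\gamma + \langle f_1,\slashed{\nabla}_4 f_2\rangle_\gamma,\qquad
e_3\,\langle f_1,f_2\rangle_\gamma = \langle \slashed{\nabla}_3 f_1,f_2\rangle_\gamma + \langle f_1,\slashed{\nabla}_3 f_2\rangle_\gamma.
\]
So it suffices to handle $\int_{D_{u,v}} e_4\phi$ and $\int_{D_{u,v}} e_3\phi$ with $\phi=\langle f_1,f_2\rangle$.

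Next, writing the spacetime measure as $W\,du\,dv\,d\theta^1 d\theta^2$ with $W=2\Omega^2\sqrt{\det\gamma}$, and recalling $e_4=\tfrac{\partial}{\partial v}+b^A\tfrac{\partial}{\partial\theta^A}$, I would expand
\[
\int_{D_{u,v}} e_4\phi = \int\!\left[\partial_v(\phi W)+\partial_A(b^A\phi W)\right]du\,dv\,d\theta - \int\phi\bigl[\partial_v W+\partial_A(b^A W)\bigr]du\,dv\,d\theta.
\]
The first bracket gives, after integrating $\partial_v$ in $v$ and observing that the $\partial_A$ terms are total spherical divergences (so they integrate to $0$ on $S_{u,v}$ by Stokes), the boundary contributions on $\underline{H}_{v_1}$ and $\underline{H}_{v_2}$, whose measures agree (up to the required conventions) with the ones in \eqref{norm:lpubar}. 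The analogous computation for $e_3=\Omega^{-2}\partial_u$ gives $\int_{D_{u,v}}\partial_u\phi\cdot 2\sqrt{\det\gamma}$ and thus produces the boundary contributions on $H_{u_1}$, $H_{u_2}$.

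The bulk correction then amounts to computing the divergences
\[
\tfrac{1}{W}\bigl(\partial_v W + \partial_A(b^A W)\bigr),\qquad \tfrac{1}{\sqrt{\det\gamma}}\partial_u\sqrt{\det\gamma}.
\]
For the first, contracting \eqref{eq:lgamma} with $\gamma^{AB}$ one gets
\[
\tfrac{1}{2}\gamma^{AB}\bigl(\partial_v\gamma_{AB}+b^C\partial_C\gamma_{AB}\bigr) + \partial_A b^A = \slashed{\mathrm{tr}}\chi,
\]
so that $e_4(\sqrt{\det\gamma})/\sqrt{\det\gamma} = \slashed{\mathrm{tr}}\chi-\partial_A b^A$; combined with \eqref{eq:omOm} which gives $e_4(\log\Omega^2)=-2\omega$, one obtains
\[
\tfrac{1}{W}\bigl(\partial_v W + \partial_A(b^A W)\bigr) = \slashed{\mathrm{tr}}\chi - 2\omega,
\]
which is exactly the coefficient $\slashed{\mathrm{tr}}\chi-2\omega$ that must appear when moved to the right-hand side with a minus sign. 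For the $e_3$-identity, equation \eqref{eq:ulgamma} directly yields $\partial_u\sqrt{\det\gamma}=\Omega^2\,\slashed{\mathrm{tr}}\underline{\chi}\,\sqrt{\det\gamma}$, and since the gauge condition $\underline{\omega}=0$ means no $\Omega$-logarithmic term contributes, one gets the bulk coefficient $-\slashed{\mathrm{tr}}\underline{\chi}$.

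The main bookkeeping obstacle is the careful treatment of the $b^A\partial_A$ part of $e_4$: one must simultaneously integrate by parts in $v$ (producing the $\underline{H}_v$ boundary terms) and in $\theta^A$ (producing no boundary terms because the $S_{u,v}$ are closed), while keeping the combination $\partial_v+b^A\partial_A$ intact so that the identity $\tfrac{1}{2}\gamma^{AB}\mathcal{L}_L\gamma_{AB}=\slashed{\mathrm{tr}}\chi$ from \eqref{eq:lgamma} can be applied cleanly. Once that is organized, both identities reduce to elementary computations and no further input is needed.
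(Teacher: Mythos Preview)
Your proposal is correct and follows the standard approach. The paper does not actually give its own proof of this proposition; it simply states that the result is a basic integration-by-parts computation and refers the reader to \cite{iwaves2} for details. Your argument --- reducing via the Leibniz rule to $e_4\phi$ and $e_3\phi$, writing the spacetime measure explicitly, integrating by parts in $v$ (resp.\ $u$) and in $\theta$, and then identifying the volume-form derivatives using \eqref{eq:lgamma}, \eqref{eq:ulgamma}, and \eqref{eq:omOm} --- is precisely the computation that underlies the cited reference.

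One small caveat worth flagging: with the hypersurface measures defined in this paper (carrying a factor $2\Omega\sqrt{\det\gamma}$ rather than $2\Omega^2\sqrt{\det\gamma}$), the boundary terms your computation produces differ from $\int_{\underline{H}_v} f_1 f_2$ and $\int_{H_u} f_1 f_2$ by a factor of $\Omega^{\pm 1}$. You acknowledged this with your parenthetical ``up to the required conventions,'' and indeed in the applications here $\Omega$ is bounded between $1/2$ and $2$, so this is harmless for the estimates; but if you were writing this out in full it would be worth either adjusting the hypersurface measure conventions or carrying the extra $\Omega$ factor explicitly.
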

\begin{proposition}\label{intbyparts2}
Let $f$ be an $m$-tensorfield and $g$ be an $(m-1)$-tensorfield. We have that
\begin{equation}\label{ibp3}
\int_{D_{u,v}} ( f^{\alpha_1 \cdots \alpha_m} \slashed{\nabla}_{\alpha_m} g_{\alpha_1 \cdots \alpha_{m-1}} + \slashed{\nabla}^{\alpha_m} f_{\alpha_1 \cdots \alpha_m} g^{\alpha_1 \cdots \alpha_{m-1}} ) = - \int_{D_{u,v}} (\eta +\underline{\eta} ) f g .
\end{equation}
\end{proposition}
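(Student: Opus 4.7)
The plan is to reduce the spacetime identity to the divergence theorem on each closed 2-sphere $S_{u,v}$, with the error arising entirely from the fact that the spacetime volume form carries an extra factor of $\Omega^2$ relative to $d\mu_S\,du\,dv$. By the definition of the spacetime integral given in Section \ref{dn},
\begin{equation*}
\int_{D_{u,v}} F \;=\; 2\int_{u_0}^{u}\!\!\int_{v_0}^{v}\!\!\int_{S_{u',v'}} \Omega^2 F\, d\mu_S\, dv'\, du',
\end{equation*}
so we must absorb $\Omega^2$ into the tangential tensor we differentiate before we can exploit that $S_{u,v}$ has no boundary.

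First, I would observe that $\Omega^2 f^{A_1\dots A_{m-1} A} g_{A_1\dots A_{m-1}}$ is a vector field tangent to $S_{u,v}$, and the divergence theorem on the closed surface gives
\begin{equation*}
\int_{S_{u,v}} \slashed{\nabla}_{A}\!\left( \Omega^2 f^{A_1\dots A_{m-1} A} g_{A_1\dots A_{m-1}} \right) d\mu_S \;=\; 0.
\end{equation*}
Expanding by the Leibniz rule on each sphere, this identity becomes
\begin{equation*}
\int_{S_{u,v}} \Omega^2\bigl( f^{A_1\dots A_m}\slashed{\nabla}_{A_m} g_{A_1\dots A_{m-1}} + (\slashed{\nabla}^{A_m} f_{A_1\dots A_m}) g^{A_1\dots A_{m-1}}\bigr) d\mu_S \;=\; -\int_{S_{u,v}} (\slashed{\nabla}^{A} \Omega^2)\, f_{A_1\dots A_{m-1} A}\, g^{A_1\dots A_{m-1}}\, d\mu_S.
\end{equation*}

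The last step is to identify the right-hand side. From \eqref{eq:Ometa} we have $\slashed{\nabla}_A \log\Omega = \tfrac{1}{2}(\eta_A + \underline{\eta}_A)$, hence $\slashed{\nabla}^A \Omega^2 = \Omega^2(\eta^A + \underline{\eta}^A)$. Substituting, multiplying by $2$, and integrating over $(u',v') \in [u_0,u]\times[v_0,v]$, the factors of $2\Omega^2$ combine with $d\mu_S\,du'\,dv'$ to reconstruct the spacetime volume element, yielding exactly \eqref{ibp3}. There is no genuine obstacle here: the computation is essentially the divergence theorem on $S_{u,v}$, and the only subtlety is bookkeeping, namely remembering the factor of $\Omega^2$ in the volume form and invoking \eqref{eq:Ometa} to convert $\Omega^{-2}\slashed{\nabla}^A \Omega^2$ into $\eta^A + \underline{\eta}^A$.
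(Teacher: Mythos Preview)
Your argument is correct and is precisely the standard computation: apply the divergence theorem on each closed sphere to the vector field $\Omega^2 f^{A_1\dots A_{m-1}A}g_{A_1\dots A_{m-1}}$, and use \eqref{eq:Ometa} to convert $\Omega^{-2}\slashed{\nabla}^A\Omega^2$ into $\eta^A+\underline{\eta}^A$. The paper does not actually supply its own proof of this proposition---it simply refers the reader to \cite{iwaves2}---so there is nothing to compare against beyond noting that your proof is the expected one.
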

A consequence of estimate \eqref{ibp1} from Proposition \ref{intbyparts} is the following.
\begin{proposition}\label{intbyparts3}
Assume that the bootstrap assumptions \eqref{bootstraps} or \eqref{bootstraps2} hold true. Assume that $f$ satisfies
$$ \slashed{\nabla}_4 f + q \slashed{\mathrm{tr}} \chi f = F , $$
for some $q \geq \frac{1}{2}$. Then we have that 
\begin{equation}\label{ibp4}
\int_{\underline{H}_v} v^p | f |^2 \lesssim \int_{\underline{H}_{v_0}} v_0^p | f |^2 + \int_{D_{u,v}} v^p \left| \langle f , F \rangle \right| ,
\end{equation}
for all $v \in [ v_0 , \infty)$, and for 
$$ 0 \leq p \leq 4q-2 . $$
\end{proposition}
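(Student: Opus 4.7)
The plan is to apply Proposition \ref{intbyparts}, in particular identity \eqref{ibp1}, with $f_1 = f_2 = v^{p/2} f$, using the transport equation to substitute for $\slashed{\nabla}_4 f$, and then exploit the sign of the leading-order coefficient to obtain the claimed energy estimate.

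More precisely, first I would compute
$$ \slashed{\nabla}_4 \bigl( v^{p/2} f \bigr) = \frac{p}{2v} v^{p/2} f + v^{p/2} \slashed{\nabla}_4 f = \frac{p}{2v} v^{p/2} f - q \slashed{\mathrm{tr}} \chi \, v^{p/2} f + v^{p/2} F , $$
and then insert this into \eqref{ibp1} with $f_1 = f_2 = v^{p/2} f$. The resulting identity takes the form
$$ \int_{\underline{H}_v} v^p | f |^2 - \int_{\underline{H}_{v_0}} v_0^p | f |^2 = \int_{D_{u,v}} \Bigl( \frac{p}{v} + (1-2q) \slashed{\mathrm{tr}} \chi - 2 \omega \Bigr) v^p | f |^2 + 2 \int_{D_{u,v}} v^p \langle f , F \rangle . $$
The second step is to split $\slashed{\mathrm{tr}} \chi = \frac{2}{v} + \bigl( \slashed{\mathrm{tr}} \chi - \frac{2}{v} \bigr)$ so that the principal part of the bulk coefficient becomes
$$ \frac{p + 2 - 4q}{v} + (1-2q) \Bigl( \slashed{\mathrm{tr}} \chi - \frac{2}{v} \Bigr) - 2 \omega . $$
Under the hypothesis $0 \leq p \leq 4q - 2$, the leading term $\frac{p+2-4q}{v}$ is non-positive and can therefore be discarded from the right-hand side without loss.

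The third step is to absorb the two remaining \textit{error} contributions from $\slashed{\mathrm{tr}} \chi - \frac{2}{v}$ and $\omega$. By the bootstrap assumptions \eqref{bootstraps} (or \eqref{bootstraps2}), combined with the Sobolev inequalities in Proposition \ref{sobolev}, both $\| \slashed{\mathrm{tr}} \chi - \frac{2}{v} \|_{L^1_v L^\infty (S_{u,v} )}$ and $\| \omega \|_{L^1_v L^\infty (S_{u,v})}$ are bounded by a small constant. This reduces the estimate to
$$ \int_{\underline{H}_v} v^p | f |^2 \lesssim \int_{\underline{H}_{v_0}} v_0^p | f |^2 + \int_{v_0}^{v} h(v') \Bigl( \int_{\underline{H}_{v'}} (v')^p | f |^2 \Bigr) dv' + \int_{D_{u,v}} v^p | \langle f , F \rangle | , $$
for some integrable $h \in L^1_v$ with small norm, and a standard Gronwall argument in the $v$ variable then yields the desired bound.

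The only subtle point, which I regard as the main obstacle, is the threshold character of the condition $p \leq 4q - 2$: at the endpoint $p = 4q - 2$ the leading coefficient $\frac{p+2-4q}{v}$ vanishes identically, leaving only the small error terms, so one must ensure the Gronwall step does not produce a logarithmic loss. This is handled precisely because the error coefficients are in $L^1_v$ (not merely $L^\infty_v \cdot \frac{1}{v}$), which is exactly what the bootstrap norms on $\slashed{\mathrm{tr}} \chi - \frac{2}{v}$ and $\omega$ in \eqref{bootstraps} and \eqref{bootstraps2} are tailored to provide.
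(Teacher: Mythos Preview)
Your proposal is correct and follows essentially the same approach as the paper: apply identity \eqref{ibp1} with the weight $v^p$, substitute the transport equation, split $\slashed{\mathrm{tr}}\chi = \tfrac{2}{v} + (\slashed{\mathrm{tr}}\chi - \tfrac{2}{v})$, discard the non-positive leading term under $p \le 4q-2$, and control the remaining $\omega$ and $\slashed{\mathrm{tr}}\chi - \tfrac{2}{v}$ errors via the bootstrap assumptions and Sobolev. The only cosmetic difference is that the paper absorbs the small bulk error by taking $\sup_v$ on both sides and using smallness directly, whereas you phrase the same step as a Gronwall argument in $v$; both are equivalent here.
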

\begin{proof}
We make use of estimate \eqref{ibp1} and we have that
$$ \int_{D_{u,v}} v^p \langle f , \slashed{\nabla}_4 f \rangle = \frac{1}{2} \int_{\underline{H}_{v_2}} v_2^p | f |^2 - \frac{1}{2} \int_{\underline{H}_{v_1}} v_1^p | f |^2 + \int_{D_{u,v}} v^p \left( \omega - \frac{1}{2} \slashed{\mathrm{tr}} \chi - \frac{p}{2v} \right) | f |^2 , $$
which implies by making use of the assumption on $\slashed{\nabla}_4 f$ that
\begin{align*}
\frac{1}{2} \int_{\underline{H}_{v_2}} v_2^p | f |^2 = & \frac{1}{2}\int_{\underline{H}_{v_1}} v_1^p | f |^2 - \int_{D_{u,v}} v^p \left( \omega + \left( q - \frac{1}{2} \right) \slashed{\mathrm{tr}} \chi - \frac{p}{2v} \right) | f |^2 - \int_{D_{u,v}} v^p \langle f , F \rangle \\ = & \frac{1}{2}\int_{\underline{H}_{v_1}} v_1^p | f |^2 - \int_{D_{u,v}} v^p \left[ \omega + \left( q - \frac{1}{2} \right) \left( \slashed{\mathrm{tr}} \chi - \frac{2}{v} \right) + \left( q - \frac{1}{2} \right) \left( 1 - \frac{p}{4q-2} \right)  \frac{2}{v} \right] | f |^2 - \int_{D_{u,v}} v^p \langle f , F \rangle \\ \leq & \frac{1}{2}\int_{\underline{H}_{v_1}} v_1^p | f |^2 + C \int_{D_{u,v}} v^p \left( | \omega | + \left| \slashed{\mathrm{tr}} \chi - \frac{2}{v} \right| \right) | f |^2 + \int_{D_{u,v}} v^p \left| \langle f , F \rangle \right| ,
\end{align*}
for $C$ a positive constant depending on $p$ and $q$. Using the bootstrap assumptions from Theorem \ref{thm:main} we have that
\begin{align*}
C \int_{D_{u,v}} v^p \left( | \omega | + \left| \slashed{\mathrm{tr}} \chi - \frac{2}{v} \right| \right) | f |^2 \lesssim & \int_{D_{u,v}} \sum_{k=0}^2 \left( \| ( v \slashed{\nabla} )^k \omega \|_{L^1_v L^{\infty}_u L^2 (S_{u,v})} +  \left\| ( v \slashed{\nabla} )^k \left( \slashed{\mathrm{tr}} \chi - \frac{2}{v} \right) \right\|_{L^1_v L^{\infty}_u L^2 (S_{u,v})} \right) | f|^2 \\ \leq & \epsilon' \sup_{v \in [v_1 , v_2 ]} \int_{\underline{H}_v} | f |^2 , 
\end{align*}
for some $\epsilon' > 0$ small enough (this is the point where the bootstrap assumptions where used, also introducing integrable decay in $v$ -- note that here we used the bootstrap assumptions of Theorem \ref{thm:main} although the same estimates are implied by the bootstrap assumptions of Theorem \ref{thm:main2}), so that the last term can be absorbed by the left-hand side of the previous inequality after taking the supremum in $v$.
\end{proof}
We also have the following Proposition (see \cite{iwaves1} for a proof) for the commutators $[\slashed{\nabla}_3 , \slashed{\nabla}]$ and $[\slashed{\nabla}_4 , \slashed{\nabla}]$.
\begin{proposition}\label{commute}
Assume that
$$ \slashed{\nabla}_3 f = F_0 , \quad \slashed{\nabla}_3 ( \slashed{\nabla}^k f ) = F_k,  \quad \slashed{\nabla}_4 \widetilde{g} = G_0, \quad \slashed{\nabla}_4 (\slashed{\nabla}^k \widetilde{g} ) = G_k . $$
Then we have that
\begin{equation*}
\begin{split}
F_k + \frac{k}{2} \slashed{\mathrm{tr}} \underline{\chi} ( \slashed{\nabla}^k f ) \sim & \sum_{\psi \in \{ \eta , \underline{\eta} \}} \sum_{k_1 + k_2 + k_3 = k}  \slashed{\nabla}^{k_1} \psi^{k_2}  ( \slashed{\nabla}^{k_3} F_0 ) + \sum_{\psi \in \{ \eta , \underline{\eta} \} , \psi_{\underline{H}} \in \{ \underline{\hat{\chi}} , \slashed{\mathrm{tr}} \underline{\chi} \}} \sum_{k_1 + k_2 + k_3 + k_4 = k} (  \slashed{\nabla}^{k_1} \psi )^{k_2}  ( \slashed{\nabla}^{k_3} \psi_{\underline{H}} )  ( \slashed{\nabla}^{k_4} f ) \\ & + \sum_{\psi \in \{ \eta , \underline{\eta} \}} \sum_{k_1 + k_2 + k_3 + k_4 = k-1} ( \slashed{\nabla}^{k_1} \psi )^{k_2}  ( \slashed{\nabla}^{k_3} \underline{\beta} ) ( \slashed{\nabla}^{k_4} f ) ,
\end{split}
\end{equation*}
and
\begin{equation*}
\begin{split}
G_k + \frac{k}{2} \slashed{\mathrm{tr}} \chi ( \slashed{\nabla}^k \widetilde{g} ) \sim & \sum_{\psi \in \{ \eta , \underline{\eta} \}} \sum_{k_1 + k_2 + k_3 = k}  ( \slashed{\nabla}^{k_1} \psi )^{k_2}  ( \slashed{\nabla}^{k_3} G_0 ) + \sum_{\psi \in \{ \eta , \underline{\eta} \} , \psi_H \in \{ \hat{\chi} , \slashed{\mathrm{tr}} \chi - \frac{2}{v} \} } \sum_{k_1 + k_2 + k_3 + k_4 = k} (\slashed{\nabla}^{k_1} \psi )^{k_2}  ( \slashed{\nabla}^{k_3} \psi_H )   ( \slashed{\nabla}^{k_4} \widetilde{g} ) \\ & + \sum_{\psi \in \{ \eta , \underline{\eta} \}} \sum_{k_1 + k_2 + k_3 + k_4 = k-1} ( \slashed{\nabla}^{k_1}\psi )^{k_2}  ( \slashed{\nabla}^{k_3} \beta ) ( \slashed{\nabla}^{k_4} \widetilde{g} ) .
\end{split}
\end{equation*}
\end{proposition}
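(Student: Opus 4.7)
The plan is to establish both formulas by induction on $k$. For the base case $k = 1$, the identities $[\slashed{\nabla}_3, \slashed{\nabla}_A]\phi$ and $[\slashed{\nabla}_4, \slashed{\nabla}_A]\phi$ acting on an arbitrary $S$-tangent tensor $\phi$ are derived directly from
\[
(D_X D_Y - D_Y D_X - D_{[X,Y]}) Z = R(X,Y) Z
\]
applied with $X \in \{e_3, e_4\}$ and $Y = e_A$, together with the expansion of $[e_3, e_A]$ and $[e_4, e_A]$ into the null frame using the definitions of $\chi, \underline{\chi}, \eta, \underline{\eta}, \omega, \zeta$ from Section \ref{ccc} (and the identities $\underline{\omega} = 0$, $\underline{\eta} = -\zeta$). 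After projecting onto $TS_{u,v}$ and separating the trace part of $-\underline{\chi}_A{}^B$, this yields the schematic base identity
\[
[\slashed{\nabla}_3, \slashed{\nabla}_A]\phi + \tfrac{1}{2}\slashed{\mathrm{tr}}\underline{\chi}\,\slashed{\nabla}_A \phi \sim \underline{\hat{\chi}} \cdot \slashed{\nabla} \phi + (\eta + \underline{\eta}) \cdot \slashed{\nabla}_3 \phi + \underline{\beta} \cdot \phi + (\eta + \underline{\eta}) \cdot \underline{\chi} \cdot \phi,
\]
and the analogous identity in the $4$-direction with $\underline{\chi}, \underline{\beta}, \underline{\hat{\chi}}$ replaced by $\chi, \beta, \hat{\chi}$. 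This reproduces the claimed schematic structure at $k = 1$.

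For the inductive step, assuming the formula for $F_{k-1} = \slashed{\nabla}_3(\slashed{\nabla}^{k-1} f)$, I apply $\slashed{\nabla}_A$ to $\slashed{\nabla}^{k-1} f$ and commute $\slashed{\nabla}_3$ through this outermost $\slashed{\nabla}_A$ via the base identity applied to the rank-$(k-1)$ tensor $\slashed{\nabla}^{k-1} f$. The trace piece $\tfrac{1}{2}\slashed{\mathrm{tr}}\underline{\chi}$ from the commutator combines with the inductive coefficient $\tfrac{k-1}{2}\slashed{\mathrm{tr}}\underline{\chi}$ (inherited from differentiating $F_{k-1}$) to produce exactly the stated $\tfrac{k}{2}\slashed{\mathrm{tr}}\underline{\chi}\,(\slashed{\nabla}^k f)$. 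The remaining pieces distribute as follows: $\underline{\hat{\chi}} \cdot \slashed{\nabla}^k f$ lands in the middle sum with $\psi_{\underline{H}} = \underline{\hat{\chi}}$ and $(k_1, k_2, k_3, k_4) = (0,0,0,k)$; $(\eta + \underline{\eta}) \cdot \slashed{\nabla}_3(\slashed{\nabla}^{k-1} f) = (\eta + \underline{\eta}) \cdot F_{k-1}$ unfolds by the inductive hypothesis into the first two sums with one extra factor of $\psi$ appended; the curvature contribution $\underline{\beta} \cdot \slashed{\nabla}^{k-1} f$ sits in the third sum with $k_3 = 0$, $k_4 = k-1$; and derivatives $\slashed{\nabla}$ that land on pre-existing $\psi$, $\psi_{\underline{H}}$, or $\underline{\beta}$ factors in $F_{k-1}$ simply increment the corresponding $k_i$ while preserving the stated total.

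The main obstacle is the bookkeeping: verifying that every term generated at level $k$ fits into one of the three schematic sums with the correct index constraints $k_1 + k_2 + k_3 = k$ (first), $k_1 + k_2 + k_3 + k_4 = k$ (second), and $k_1 + k_2 + k_3 + k_4 = k-1$ (third). A subtle point is that $\underline{\beta}$-containing terms must never pick up more than $k-1$ total derivatives on their non-$f$ factors; this is ensured because the base commutator produces $\underline{\beta}$ contracted directly with $\phi$ rather than with $\slashed{\nabla}\phi$, so each subsequent outer $\slashed{\nabla}$ that passes through the commutator adds at most one to the total count on the curvature line. The identity for $G_k$ is proved by the same induction after the substitutions $\underline{\chi} \leftrightarrow \chi$, $\underline{\hat{\chi}} \leftrightarrow \hat{\chi}$, $\underline{\beta} \leftrightarrow \beta$, $\slashed{\mathrm{tr}}\underline{\chi} \leftrightarrow \slashed{\mathrm{tr}}\chi$, with the extra $\omega$ terms arising from $D_{e_4} e_3 = 2\omega e_3 + 2\underline{\eta}^A e_A$ (and the renormalization $\slashed{\mathrm{tr}}\chi - \tfrac{2}{v}$) contributing only harmless lower-order corrections that are absorbed into the same schematic sums.
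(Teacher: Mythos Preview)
Your inductive argument is correct and is the standard route; the paper itself does not supply a proof here but simply refers to \cite{iwaves1}, where exactly this induction (base commutator plus repeated application) is carried out. One small imprecision: in the gauge of this paper (where $e_4=\partial_v+b^A\partial_{\theta^A}$ and $[e_4,e_A]$ is $S$-tangent) the commutator $[\slashed{\nabla}_4,\slashed{\nabla}_A]$ does \emph{not} generate any $\omega$ term, so your closing remark about absorbing extra $\omega$ contributions is unnecessary --- there is nothing to absorb, and indeed $\omega$ is not among the schematic building blocks $\{\eta,\underline{\eta},\hat{\chi},\slashed{\mathrm{tr}}\chi-\tfrac{2}{v},\beta\}$ in the stated formula.
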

Note that in the previous Proposition we used the notation
$$ ( \slashed{\nabla}^{k_1} f )^{k_2} = \sum_{l_1 + \cdots + l_{k_2} = k_1 } ( \slashed{\nabla}^{l_1} \psi ) \cdots ( \slashed{\nabla}^{l_{k_2}} \psi ) , $$
and if $k_2 = 0$ then the above term is equal to 0. We also note that as $\beta$ and $\underline{\beta}$ can be written as:
$$ \beta = \snabla \chi + \psi \chi , \quad \underline{\beta} = \snabla \underline{\chi} + \psi \underline{\chi} , $$
for $\psi \in \{ \eta , \underline{\eta} \}$ then we also have the following for the quantities of Proposition \ref{commute}:
$$ F_k + \frac{k}{2} \slashed{\mathrm{tr}} \underline{\chi} ( \snabla^k f ) \sim \sum_{\psi \in \{ \eta , \underline{\eta} \}} \sum_{k_1 + k_2 + k_3 = k}  \slashed{\nabla}^{k_1} \psi^{k_2}  ( \slashed{\nabla}^{k_3} F_0 ) + \sum_{\psi \in \{ \eta , \underline{\eta} \} , \psi_{\underline{H}} \in \{ \underline{\hat{\chi}} , \slashed{\mathrm{tr}} \underline{\chi} \}} \sum_{k_1 + k_2 + k_3 + k_4 = k} (  \slashed{\nabla}^{k_1} \psi )^{k_2}  ( \slashed{\nabla}^{k_3} \psi_{\underline{H}} )  ( \slashed{\nabla}^{k_4} f ) , $$
and
$$ G_k + \frac{k}{2} \slashed{\mathrm{tr}} \chi ( \snabla^k \widetilde{g} ) \sim \sum_{\psi \in \{ \eta , \underline{\eta} \}} \sum_{k_1 + k_2 + k_3 = k}  ( \slashed{\nabla}^{k_1} \psi )^{k_2}  ( \slashed{\nabla}^{k_3} G_0 ) + \sum_{\psi \in \{ \eta , \underline{\eta} \} , \psi_H \in \{ \hat{\chi} , \slashed{\mathrm{tr}} \chi - \frac{2}{v} \} } \sum_{k_1 + k_2 + k_3 + k_4 = k} (\slashed{\nabla}^{k_1} \psi )^{k_2}  ( \slashed{\nabla}^{k_3} \psi_H )   ( \slashed{\nabla}^{k_4} \widetilde{g} ) . $$

\section{Proof of Theorem \ref{thm:main}}\label{p1}

The proof will involve a bootstrap argument. Assume that
\begin{equation}\label{bootstraps}
\sum_{i=1}^4 \mathcal{O}_i + v^2 \left\| ( v \slashed{\nabla} )^m \left( K - \frac{1}{v^2} \right) \right\|_{L^2 (S_{u ,v} )} \leq \widetilde{C} \sum_{i=1}^4 \epsilon_i ,
\end{equation}
for some constant $\widetilde{C}$. The goal will be to improve the constant $\widetilde{C}$ (using the smallness of $\epsilon$) and close the bootstrap assumption.

\textbf{Metric components, $\Omega$, $b$, and $\gamma$:} We start by presenting some auxiliary estimates for the metric curvature components. 

We integrate equation \eqref{eq:upuOm} in $v$ and we have as $\Omega = 1$ initially that
\begingroup
\allowdisplaybreaks
\begin{align*}
\log \Omega (u , v, \theta^1 , \theta^2 ) = & - \int_{v_0}^{v} \omega \, d v' \\ & - \frac{1}{2} \int_{v_0}^{v} b^A ( \eta_A + \underline{\eta}_A ) \, d v' .
\end{align*}
\endgroup
For the first term of the right hand side above we have that:
$$ - \int_{v_0}^{v} \omega \, d v' \leq \sum_{i=0}^2 \int_{v_0}^{v}  \frac{1}{v'} \|( v' \snabla )^i \omega \|_{L^2 (S_{u,v'} )} \, d v' \leq \frac{1}{v_0} \sum_{i=0}^2 \| ( v' \snabla )^i \omega \|_{L^1_{v'} L^2 (S_{u,v'} )} , $$
where we used Sobolev's inequality \eqref{sobolev3}, and then the last term can be bounded by the bootstrap assumptions on $\omega$. For the second term we make the bootstrap assumptions that 
$$ \sum_{A \in \{1,2\}} | b^A | \leq \breve{\epsilon} ,$$  
for some $\breve{\epsilon} > 0$. We have that:
\begin{align*}
- \frac{1}{2} \int_{v_0}^{v} b^A ( \eta_A + \underline{\eta}_A ) \, dv' \lesssim&  \sup | b^A | \sum_{k=0}^2 \int_{v_0}^{v} \frac{1}{(v' )^{1+\delta}} ( \| ( v' \snabla )^k \eta \|_{L^2 (S_{u , v'})} + \| ( v' \snabla )^k \underline{\eta} \|_{L^2 (S_{u , v'})} ) \, dv' \\ \lesssim & \frac{1}{v_0^{\delta}} \sup | b^A | \sup_v \sum_{k=0}^2 \left( \| ( v' \snabla )^k \eta \|_{L^2 (S_{u , v})} + \| ( v' \snabla )^k \underline{\eta} \|_{L^2 (S_{u , v})} \right), 
\end{align*}
where we used Sobolev's inequality \eqref{sobolev3} and where the last terms can be bounded by the bootstrap assumptions.
The above estimates imply that by the choice of $v_0$ and $\epsilon$ small enough we have that
$$ -\epsilon \lesssim \log \Omega \lesssim \epsilon , $$
and using again the smallness of $\epsilon$ and the monotonicity of the $\log$ function, we get that:
$$ \frac{1}{2} \leq \Omega \leq 2 . $$

On the other hand for $b$ we use equation \eqref{eq:b} and integrate it in $u$ and we have that:
\begingroup
\allowdisplaybreaks
\begin{align*}
b^A (u , v , \theta^1 , \theta^2 ) - b^A (u_0 , v , \theta^1 , \theta^2 ) = & \int_{u_0}^u \Omega^2 \gamma^{AB} ( 2 \eta_B - 2 \underline{\eta}_B ) \, du' \\ \lesssim & \sum_{k=0}^2 \int_{u_0}^u \frac{1}{v^{3+\delta}} ( \| (v \snabla )^k \eta \|_{L^2(S_{u , v})} + \| \underline{\eta} \|_{L^{\infty} (S_{u , \underline{u}'})} ) \, du'  \mbox{  for $A \in \{1,2\}$,}
\end{align*}
\endgroup 
and the last term can be bounded by the bootstrap assumptions of $\eta$ and $\underline{\eta}$, the estimate we just showed for $\Omega$ (again as we work within a bootstrap argument), and that 
$$ | \gamma^{AB} | ( u , v , \theta^1 , \theta^2 ) \leq \frac{\breve{C}}{v^2} , $$
that we will show below (for some constant $\breve{C}$). The desired estimate for $b$ now follows from the smallness of $\epsilon$ and the fact that $b^A (u_0 , v , \theta^1 , \theta^2 ) = 0$.

Equation \eqref{eq:ulgamma} implies that:
\begin{equation}\label{eq:detgamma}
\frac{\partial}{\partial u} \log ( \det \gamma ) = \Omega^2 \slashed{\mathrm{tr}} \underline{\chi} .
\end{equation}
This gives us that
$$ \det \gamma ( u , v , \theta^1 , \theta^2 ) = \det \gamma ( u_0 , v , \theta^1 , \theta^2 ) \cdot e^{\int_{u_0}^u \Omega^2 \slashed{\mathrm{tr}} \underline{\chi} \, du' } , $$
and since by the boundedness estimate for $\Omega$ and the bootstrap assumptions for $\slashed{\mathrm{tr}} \underline{\chi}$ we have that
$$ \int_{u_0}^u \Omega^2 \slashed{\mathrm{tr}} \underline{\chi} \, du' \leq 4 \left( \int_{u_0}^u \frac{1}{\mathrm{w}^2 (u' )} \, du' \right)^{1/2} \left( \int_{u_0}^u \mathrm{w}^2 (u' ) \| \slashed{\mathrm{tr}}  \underline{\chi} \|_{L^2 ( S_{ u' , \underline{u}} )}^2  \, du' \right)^{1/2}  , $$
we get in the end that
$$ | \det \gamma ( u , \underline{u} , \theta^1 , \theta^2 ) - \det \gamma ( u_0 , \underline{u} , \theta^1 , \theta^2 ) | \lesssim C \epsilon , $$
and since $\epsilon$ is small enough this implies the desired estimate for $\det \gamma$. The lower bound for $\det \gamma$ implies that $\gamma$ is bounded from below as well. On the other hand by integrating \eqref{eq:ulgamma} in $u$  we have that:
$$ | \gamma_{AB} ( u , v , \theta^1 , \theta^2 ) - \gamma_{AB} ( u_0 , v, \theta^1 , \theta^2 ) | \leq 2 \int_{u_0}^u | \Omega^2 \underline{\chi}_{AB} | \, du' \leq 8 \left( \int_{u_0}^u \frac{1}{\mathrm{w}^2 (u' )} \, du' \right)^{1/2} \sum_{k=0}^2 \left( \int_{u_0}^u \frac{1}{v} \mathrm{w}^2 (u' ) \| ( v \snabla )^k \underline{\chi} \|_{L^2 (S_{u' , \underline{u} })}^2  \, du' \right)^{1/2} , $$
where we used again the boundedness of $\Omega$ and the bootstrap assumptions for $\underline{\hat{\chi}}$ and $\slashed{\mathrm{tr}} \underline{\chi}$, which in turn imply the desired estimate for $\gamma$.

\textbf{Riemann curvature components, 1. $\beta$, $K - \frac{1}{v^2}$, and $\check{\sigma}$:} We use the energy inequalities of Propositions \ref{intbyparts} and \ref{intbyparts3} where we use the $\slashed{\nabla}_3$ equation \eqref{eq:beta3} for $\beta$ and the $\slashed{\nabla}_4$ equations \eqref{eq:gauss4}, \eqref{eq:csigma4} for $K$ and $\check{\sigma}$ and we have that:
\begin{align}\label{boot:riemann1}
\int_{H_{u_2}} v^3 | \beta |^2 + \int_{\underline{H}_{v_2}} v_2^3  \left( \left| K - \frac{1}{v^2} \right|^2 + | \check{\sigma}  |^2 \right) \lesssim & \int_{H_{u_2}} v^3 | \beta |^2 + \int_{\underline{H}_{v_1}} v_1^3 \left( \left|  K - \frac{1}{v^2} \right|^2 + | \check{\sigma} |^2 \right) \\ & + \int_{D_{u,v}} v^3 \left( \langle \beta , F_{\beta} \rangle +  \left\langle K - \frac{1}{v^2} , F_{K - \frac{1}{v^2}} \right\rangle + \langle \sigma , F_{\check{\sigma}} \rangle \right) ,
\end{align}
where 
$$
\slashed{\nabla}_3 \beta + \slashed{\mathrm{tr}} \underline{\chi} \beta - \slashed{\nabla} \left( K - \frac{1}{v^2} \right) +^{*} \slashed{\nabla} \sigma = F_{\beta} , \quad  \slashed{\nabla}_4 \left( K - \frac{1}{v^2} \right) + \frac{3}{2}\slashed{\mathrm{tr}} \chi \left( K - \frac{1}{v^2} \right) + \slashed{\mathrm{div}} \beta = F_{K - \frac{1}{v^2}} , \quad \slashed{\nabla}_4 \check{\sigma}  +\frac{3}{2} \slashed{\mathrm{tr}} \chi \check{\sigma} +^{*}\slashed{\mathrm{div}}\beta = F_{\check{\sigma}} . $$
Note that we used the bootstrap assumption on $\slashed{\mathrm{tr}} \underline{\chi}$ so that we have:
$$ \int_{D_{u,v}} \slashed{\mathrm{tr}} \underline{\chi} | \beta |^2 \lesssim  \sup_{u \in [ u_1 , u_2 ]} \int_{H_u} v^2 | \beta |^2 , $$
and this term can be absorbed by the left hand side after taking supremums. On the other hand the terms involving $\slashed{\mathrm{tr}} \chi$ for $\rho$ and $\sigma$ have the right sign after using equations \eqref{eq:rho4} and \eqref{eq:sigma4} and the bootstrap assumption on $\slashed{\mathrm{tr}} \chi$ (taking $v_0$ large enough if needed). In this way we can ignore all terms involving $\slashed{\mathrm{tr}} \chi$ and $\slashed{\mathrm{tr}} \underline{\chi}$ coming both from the identities \eqref{ibp1} and \eqref{ibp2} and the equations for $\beta$, $K - \frac{1}{v^2}$ and $\check{\sigma}$. So for the nonlinear terms we start with $F_{\beta}$ and we have that:
$$
\int_{D_{u,v}} v^3 \langle \beta ,  F_{\beta} \rangle  =   I_1 + II_1 + III_1 + IV_1 + V_1 + VI_1 + VII_1 + VIII_1 + IX_1  ,    $$
where 
$$ \mbox{$I_1$ involves the term $\frac{6}{v^2} \eta$,} $$ 
$$ \mbox{$II_1$ involves the term $2 \hat{\chi} \cdot \underline{\beta}$,} $$
$$ \mbox{ $III_1$ involves the terms $ \eta \left( K - \frac{1}{v^2} \right)$ and $\eta \check{\sigma}$,} $$
$$ \mbox{ $IV_1$ involves the terms $ \slashed{\nabla} ( \hat{\chi} \cdot \hat{\underline{\chi}} )$ and $ \slashed{\nabla} ( \hat{\chi} \wedge \hat{\underline{\chi}} )$,} $$
$$ \mbox{ $V_1$ involves the terms $\eta \hat{\chi} \cdot \hat{\underline{\chi}}$ and $\eta \hat{\chi} \wedge \hat{\underline{\chi}}$,} $$
$$ \mbox{ $VI_1$ involves the terms $\slashed{\nabla} \left(  \slashed{\mathrm{tr}} \chi - \frac{2}{v} \right) \cdot \left( \slashed{\mathrm{tr}} \underline{\chi} + \frac{2}{v} \right)$ and $\slashed{\nabla} \left(  \slashed{\mathrm{tr}} \underline{\chi} + \frac{2}{v} \right) \cdot \left( \slashed{\mathrm{tr}} \chi - \frac{2}{v} \right)$,} $$
$$ \mbox{ $VII_1$ involves the terms $\slashed{\nabla} \left(  \slashed{\mathrm{tr}} \chi - \frac{2}{v} \right) \cdot \frac{2}{v} $ and $\slashed{\nabla} \left(  \slashed{\mathrm{tr}} \underline{\chi} + \frac{2}{v} \right) \cdot \frac{2}{v}$,} $$
$$ \mbox{ $VIII_1$ involves the terms $\eta \left(  \slashed{\mathrm{tr}} \chi - \frac{2}{v} \right) \cdot \frac{2}{v}$ and $\eta \left(  \slashed{\mathrm{tr}} \underline{\chi} + \frac{2}{v} \right) \cdot \frac{2}{v}$,} $$
$$ \mbox{ and $IX_1$ involves the term $\eta \left( \slashed{\mathrm{tr}} \chi - \frac{2}{v} \right) \cdot \left( \slashed{\mathrm{tr}} \underline{\chi} + \frac{2}{v} \right)$.} $$
We examine all terms one by one. For the $I_1$ term we have that
\begin{align*}
I_1 = & \int_{D_{u,v}} \left\langle \beta , - \frac{6}{v^2} \eta \right\rangle \\ \lesssim & \epsilon \sup_u \int_{H_u} v^3 | \beta |^2 + \frac{1}{\epsilon} \int_{D_{u,v}} v^3 \frac{1}{v^4} | \eta |^2 ,
\end{align*}
and the first term of the last expression can be absorbed by the left-hand side of \eqref{boot:riemann1}, while for the second term (which is linear in $\eta$) we use the bootstrap assumption (which guarantees integrability in $v$ as $\| \eta \|_{L^{2} (S_{u,v} )} \lesssim \frac{1}{v}$ so the overall $v$-weight is of the order of $\frac{1}{v^3}$) and choose $U$ such that $U-u_0$ is small enough to make the term appropriately small. 

For the $II_1$ term we have that
\begin{align*}
II_1 = & \int_{D_{u,v}} v^3 \langle \beta , 2 \hat{\chi} \cdot \underline{\beta} \rangle \\ \lesssim & \epsilon \int_{D_{u,v}} \frac{1}{\mathrm{w}^2 (u)} v^3 | \beta |^2 + \frac{1}{\epsilon} \int_{D_{u,v}} \mathrm{w}^2 (u) v^3 | \hat{\chi} |^2 | \underline{\beta} |^2  \\ \lesssim & \epsilon \sup_u \int_{H_u} v^3 | \beta |^2 + \frac{1}{\epsilon} \left( \int_v \sup_u v | \hat{\chi} |^2 \right) \times \sup_v \left( \int_u \int_{S_{u,v}} v^2 | \underline{\beta} |^2 \right) \\ \lesssim & \epsilon \sup_u \int_{H_u} v^3 | \beta |^2 + \frac{1}{\epsilon} + \frac{1}{\epsilon} \sum_{k=0}^2 \left\| \frac{1}{\sqrt{v}} ( v \slashed{\nabla} )^k \hat{\chi} \right\|_{L^2_v L^{\infty}_u L^2 (S_{u,v} )}^2 \| v \underline{\beta} \|_{L^{\infty}_v L^2 ( \underline{H}_v ) }^2 ,
\end{align*}
where we used Sobolev's inequality \eqref{sobolev3}, the integrability of $\frac{1}{\mathrm{w}^2 (u)}$, and then in the final expression the first term can be absorbed by the right hand side of \eqref{boot:riemann1}, while the rest of the terms can be bounded by the bootstrap assumptions.

For the $III_1$ term we have that
\begin{align*}
III_1 = & \int_{D_{u,v}} v^3 \left\langle \beta , -3 \eta \left( K - \frac{1}{v^2} \right)  +3^{*} \eta \check{\sigma} \right\rangle \\ \lesssim &  \epsilon \sup_u \int_{H_u} v^3 | \beta |^2 + \frac{1}{\epsilon} \int_{D_{u,v}} v^3 | \eta |^2 \left( \left| K - \frac{1}{v^2} \right|^2 + | \check{\sigma} |^2 \right) \\ \lesssim &  \epsilon \sup_u \int_{H_u} v^3 | \beta |^2 \\ & + \frac{1}{\epsilon} \left( \int_v \sum_{k=0}^2 \int_v \sup_u \frac{1}{v^2} \| ( v \slashed{\nabla} )^k \eta \|_{L^2 (S_{u,v} )}^2 \right) \times \left( \left\| v^{3/2} \left( K - \frac{1}{v^2} \right) \right\|_{L^{\infty}_v L^2 (\underline{H}_v )}^2 + \left\| \mathrm{w} (u) v^{3/2} \check{\sigma} \right\|_{L^{\infty}_v L^2 (\underline{H}_v )}^2 \right) ,
\end{align*}
and as before the first term of the last expression can be absorbed by the left-hand side of \eqref{boot:riemann1} and the remaining terms can be bounded by the bootstrap assumptions.

For the $IV_1$ term we have that
\begin{align*}
IV_1 = & \int_{D_{u,v}} v^3 \langle \beta , \frac{1}{2} \left( \slashed{\nabla} ( \hat{\chi} \cdot \hat{\underline{\chi}} )  +^{*} \slashed{\nabla} ( \hat{\chi} \wedge \hat{\underline{\chi}} ) \right) \rangle \\ \lesssim & \epsilon  \int_{D_{u,v}} \frac{1}{\mathrm{w}^2 (u)} v^3 | \beta |^2 + \frac{1}{\epsilon} \int_{D_{u,v}} \mathrm{w}^2 (u) v ( | ( v \slashed{\nabla} ) \hat{\chi} |^2 | \hat{\underline{\chi}} |^2 + | \hat{\chi} |^2 | ( v \slashed{\nabla} ) \hat{\underline{\chi}} |^2  \\ \lesssim & \epsilon \sup_u \int_{H_u} v^3 | \beta |^2 + \frac{1}{\epsilon} \| \mathrm{w} (u) ( v \slashed{\nabla} ) \underline{\hat{\chi}} \|_{L^2_u L^{\infty}_v L^2 (S_{u,v} )}^2 \left\|  \frac{1}{\sqrt{v}} \hat{\chi} \right\|_{L^2_v L^{\infty}_u L^2 (S_{u,v} )}^2 \\ & +\| \mathrm{w} (u) \underline{\hat{\chi}} \|_{L^2_u L^{\infty}_v L^2 (S_{u,v} )}^2 \left\|  \frac{1}{\sqrt{v}} (v \slashed{\nabla} ) \hat{\chi} \right\|_{L^2_v L^{\infty}_u L^2 (S_{u,v} )}^2 ,
\end{align*}
and the first term can be moved to the left-hand side of \eqref{boot:riemann1}, while the other terms are bounded by the bootstrap assumptions.

For the $V_1$ term we have that
\begin{align*}
V_1 = & \int_{D_{u,v}} v^3 \left\langle \beta , \frac{3}{2} ( \eta \hat{\chi} \cdot \hat{\underline{\chi}} +^{*} \eta \hat{\chi} \wedge \hat{\underline{\chi}} ) \right\rangle \\ \lesssim & \epsilon \int_{D_{u,v}} \frac{1}{\mathrm{w}^2 (u) } v^3 | \beta |^2 + \frac{1}{\epsilon} \int_{D_{u,v}} \mathrm{w}^2 (u) v^3 | \eta |^2 | \hat{\chi} |^2 | \hat{\underline{\chi}} |^2 \\ \lesssim & \epsilon \sup_u \int_{H_u} v^3 | \beta |^2 + \frac{1}{\epsilon} \left( \int_v \sup_u v^3 | \eta |^2 | \hat{\chi} |^2 \right) \times \left( \sup_v \int_u \int_{S_{u,v}} \mathrm{w}^2 (u) | \hat{\underline{\chi}} |^2 \right)  \\ \lesssim & \epsilon \sup_u \int_{H_u} v^3 | \beta |^2 + \frac{1}{\epsilon} \left( \| v ( v \slashed{\nabla} )^k \eta \|_{L^{\infty}_{u,v} L^2 (S_{u,v} )}^2 \left\| \frac{1}{\sqrt{v}} ( v \slashed{\nabla} )^k \hat{\chi} \right\|_{L^2_v L^{\infty}_u L^2 (S_{u,v} )}^2 \right) \times \|\mathrm{w} (u) \hat{\underline{\chi}} \|_{L^2_u L^{\infty}_v L^2 (S_{u,v} )}^2 ,
\end{align*}
where we used Sobolev's inequality twice for both $\eta$ and $\hat{\chi}$ (note that this use of Sobolev for these specific terms was not necessary and it could be used for another combination of terms), and in this way the terms involving $\eta$, $\hat{\chi}$ and $\hat{\underline{\chi}}$ of the final expression can be bounded by the bootstrap assumptions. Note that in the end we have more decay in $v$ than required.

For the $VI_1$ term we have that
\begin{align*}
VI_1 = & \int_{D_{u,v}} v^3 \langle \beta , - \frac{1}{4} \left( \slashed{\nabla} \left(  \slashed{\mathrm{tr}} \chi - \frac{2}{v} \right) \cdot \left( \slashed{\mathrm{tr}} \underline{\chi} + \frac{2}{v} \right) + \left(  \slashed{\mathrm{tr}} \chi - \frac{2}{v} \right) \cdot \slashed{\nabla} \left( \slashed{\mathrm{tr}} \underline{\chi} + \frac{2}{v} \right) \right\rangle \\ \lesssim & \epsilon \int_{D_{u,v}} \frac{1}{\mathrm{w}^2 (u)} v^3 | \beta |^2 + \frac{1}{\epsilon} \int_{D_{u,v}} \mathrm{w}^2 (u) v \left( \left| ( v \slashed{\nabla} ) \left( \slashed{\mathrm{tr}} \chi - \frac{2}{v} \right) \right|^2 \left|\slashed{\mathrm{tr}} \underline{\chi} + \frac{2}{v}  \right|^2 + \left| ( v \slashed{\nabla} ) \left( \slashed{\mathrm{tr}} \underline{\chi} + \frac{2}{v} \right) \right|^2 \left|\slashed{\mathrm{tr}} \chi - \frac{2}{v}  \right|^2 \right) \\ \lesssim & \epsilon \sup_u \int_{H_u} v^3 | \beta |^2 + \frac{1}{\epsilon} \Bigg[ \sum_{k=0}^2 \left\| \mathrm{w} (u) ( v \slashed{\nabla} )^k \left( \slashed{\mathrm{tr}} \underline{\chi} + \frac{2}{v} \right) \right\|_{L^2_u L^{\infty}_v L^2 (S_{u,v} )}^2 \left\| \frac{1}{\sqrt{v}} ( \slashed{\nabla} ) \left( \slashed{\mathrm{tr}} \chi - \frac{2}{v} \right) \right\|_{L^2_v L^{\infty}_u L^2 (S_{u,v} )}^2 \\ &+ \sum_{k=0}^2 \left\| \mathrm{w} (u) ( v \slashed{\nabla} )^k \left( \slashed{\mathrm{tr}} \chi - \frac{2}{v} \right) \right\|_{L^2_u L^{\infty}_v L^2 (S_{u,v} )}^2 \left\| \frac{1}{\sqrt{v}} ( \slashed{\nabla} ) \left( \slashed{\mathrm{tr}} \underline{\chi} + \frac{2}{v} \right) \right\|_{L^2_v L^{\infty}_u L^2 (S_{u,v} )}^2 \Bigg] , 
\end{align*}
where we used Sobolev's inequality \eqref{sobolev3} on the term with less angular derivatives, and we note that the last four terms can be bounded by the bootstrap assumptions.

For the $VII_1$ term we have that
\begin{align*}
VII_1 = & \int_{D_{u,v}} \int_{D_{u,v}} v^3 \langle \beta , \frac{1}{4} \left( ( v \slashed{\nabla} ) \left(  \slashed{\mathrm{tr}} \chi - \frac{2}{v} \right) \cdot \frac{2}{v^2}  +  ( v \slashed{\nabla} ) \left( \slashed{\mathrm{tr}} \underline{\chi} + \frac{2}{v} \right) \cdot \frac{2}{v} \right\rangle \\ \lesssim & \epsilon \sup_u \int_{H_u} v^3 |\beta|^2 + \frac{1}{\epsilon} \int_{D_{u,v}} \frac{1}{v} \left| ( v \slashed{\nabla} ) \left( \mathrm{tr} \chi - \frac{2}{v} \right) \right|^2 \\ & + \epsilon \int_{D_{u,v}} \frac{1}{\mathrm{w}^2 (u)} v^3 | \beta |^2 + \frac{1}{\epsilon} \int_{D_{u,v}} \mathrm{w}^2 (u) \frac{1}{v} \left| ( v \slashed{\nabla} ) \left( \mathrm{tr} \underline{\chi} + \frac{2}{v} \right) \right|^2 \\ \lesssim & \epsilon \sup_u \int_{H_u} v^3 |\beta|^2 + \epsilon \left\| \frac{1}{\sqrt{v}} \left( ( v \slashed{\nabla} ) \left( \mathrm{tr} \chi - \frac{2}{v} \right) \right) \right\|_{L^2_v L^{\infty}_u L^2 (S_{u,v} )}^2 + \frac{1}{\epsilon} \int_{v_0}^{\infty} \frac{1}{v^3} \, dv \left\| v ( v \slashed{\nabla} ) \left( \slashed{\mathrm{tr}} \underline{\chi} + \frac{2}{v} \right) \right\|_{L^2_u L^{\infty}_v L^2 (S_{u,v} )}^2 ,
\end{align*}
where in the second term of the last expression (which is linear in $\slashed{\mathrm{tr}} \chi - \frac{2}{v}$) we choose $U$ such that $U - u_0$ is appropriately small, while for the third and last term (which is linear in $\slashed{\mathrm{tr}} \underline{\chi} + \frac{2}{v}$) we can get smallness by an appropriate choice of $v_0$.

Terms $VIII_1$ and $IX_1$ can be treated similarly to the terms $VI_1$ and $VII_1$ with an appropriate use of Sobolev's inequality \eqref{sobolev3}.

For the higher derivatives 
$$ \int_{H_u} v^3 | (v \slashed{\nabla} ) \beta |^2 , \quad \int_{H_u} v^3 | (v \slashed{\nabla} )^2 \beta |^2 \mbox{  and  } \int_{H_u} v^3 | (v \slashed{\nabla} )^3 \beta |^2 , $$
we work similarly, and it can be easily seen by the way we treated $\int_{H_u} v^3 | \beta |^2$ that we can work similarly by appropriately using Sobolev's inequality \eqref{sobolev3} and arrive at the same estimates. First we should note that the extra terms that present higher order nonlinear estimates (that have additional $\eta$, $\underline{\eta}$ and $\underline{\beta}$ factors), have better decay, while in the case where all angular derivatives fall on $F_{\beta}$ we demonstrate for the sake of completeness the argument for three angular derivatives affecting the term $IV_1$ and we have that
\begin{align*}
\int_{D_{u,v}} v^3 \langle  (v \slashed{\nabla} )^3 \beta , v^3 \sum_{k_1 + k_2 = 4} & ( ( \slashed{\nabla} )^{k_1} \hat{\chi} )  \cdot  ( ( \slashed{\nabla} )^{k_2} \hat{\underline{\chi}} ) \rangle \lesssim  \epsilon \int_{D_{u,v}} \frac{1}{\mathrm{w}^2 (u) } v^3 | ( v \slashed{\nabla} )^2 \beta |^2 + \frac{1}{\epsilon} \sum_{k_1 + k_2 = 4} \int_{D_{u,v}} \mathrm{w}^2 (u)  v^9 | ( \slashed{\nabla} )^{k_1} \hat{\chi} |^2 | ( \slashed{\nabla} )^{k_2} \hat{\underline{\chi}} |^2 \\ \lesssim &  \epsilon \sup_u \int_{H_u} v^3 | ( v \slashed{\nabla} )^2 \beta |^2 + \frac{1}{\epsilon} \sum_{k_1 + k_2 = 4} \int_{D_{u,v}} \mathrm{w}^2 (u) v | ( v \slashed{\nabla} )^{k_1} \hat{\chi} |^2 | ( v \slashed{\nabla} )^{k_2} \hat{\underline{\chi}} |^2 \\ \lesssim &  \epsilon \sup_u \int_{H_u} v^3 | ( v \slashed{\nabla} )^2 \beta |^2 \\ & + \frac{1}{\epsilon} \sum_{l_1 + l_2 = 4, l_1 \leq l_2} \left( \int_v \sup_u v | ( v \slashed{\nabla} )^{l_1} \hat{\chi} |^2 \right) \times \sup_v \left( \int_u \int_{S_{u,v}} \mathrm{w}^2 (u) | ( v \slashed{\nabla} )^{l_2} \hat{\underline{\chi}} |^2 \right) \\ & + \frac{1}{\epsilon} \sum_{m_1 + m_2 = 4, m_1 \leq m_2} \left( \int_v \sup_u \int_{S_{u,v}} v | ( v \slashed{\nabla} )^{m_2} \hat{\chi} |^2 \right) \times \sup_v \left( \int_u \mathrm{w}^2 (u)  | ( v \slashed{\nabla} )^{m_1} \hat{\underline{\chi}} |^2 \right) \\ \lesssim &  \epsilon \sup_u \int_{H_u} v^3 | ( v \slashed{\nabla} )^2 \beta |^2 + \frac{1}{\epsilon} \sum_{k_1 \leq 4 , k_2 \leq 4} \left\| \frac{1}{\sqrt{v}} ( v \slashed{\nabla} )^{k_1} \hat{\chi} \right\|_{L^2_v L^{\infty}_u L^2 (S_{u,v} )}^2 \left\| \mathrm{w} (u) ( v \slashed{\nabla} )^{k_2} \hat{\underline{\chi}}  \right\|_{L^{\infty}_v L^2_u L^2 (S_{u,v} )}^2 ,
\end{align*} 
where we note that we used the same method as in the uncommuted case, using Sobolev's inequality \eqref{sobolev3} on the term with is hit by fewer angular derivatives.

Using equation \eqref{eq:rgauss4} we have that
$$ \int_{D_{u,v}} v^3 \left\langle K - \frac{1}{v^2} , F_{K - \frac{1}{v^2}} \right\rangle = I_2 + II_2 + III_2 + IV_2 + V_2 . $$
We examine again these terms one by one. For the $I_2$ term we have that
$$ I_1 =  \int_{D_{u,v}} v^3 \left\langle K - \frac{1}{v^2} , \frac{1}{2} \left( \slashed{\mathrm{tr}} \chi - \frac{2}{v} \right) \underline{\mu} + \frac{1}{v} \underline{\mu} \right\rangle , $$
where we recall that $\underline{\mu} = K - \frac{1}{v^2} - \slashed{\mathrm{div}} \underline{\eta}$. We have that
\begin{align*}
\int_{D_{u,v}} v^3 \left\langle K - \frac{1}{v^2} ,  \frac{1}{v} \underline{\mu} \right\rangle \lesssim & \int_{D_{u,v}} \frac{v^3}{v^{1+\delta}}  \left| K - \frac{1}{v^2} \right|^2 + \int_{D_{u,v}} v^{4+\delta} \frac{1}{v^2} | \underline{\mu} |^2 \\ \lesssim & \epsilon \sup_u \int_{H_u} \mathrm{w}^2 (u) v^2 \left| K - \frac{1}{v^2} \right|^2 + \epsilon \sup_v \int_{u} \int_{S_{u,v}} v^4  | \underline{\mu} |^2 , 
\end{align*}
where to gain the smallness in $\epsilon$ in both terms we used the appropriate choice of $v_0$ and both of these terms can be bounded by the bootstrap assumptions. The other part can be treated as follows: 
\begin{align*}
\int_{D_{u,v}} v^3 \left\langle K - \frac{1}{v^2} , \left( \slashed{\mathrm{tr}} \chi - \frac{2}{v} \right) \underline{\mu} \right\rangle \lesssim & \int_{D_{u,v}}  \frac{v^3}{v^2} \left| K - \frac{1}{v^2} \right|^2 + \int_{D_{u,v}}  v^5 \left| \slashed{\mathrm{tr}} \chi - \frac{2}{v} \right|^2 | \underline{\mu} |^2 \\ \lesssim & \epsilon \sup_v \int_{\underline{H}_v} v^3 \left| K - \frac{1}{v^2} \right|^2 + \int_{D_{u,v}} v^5 \left| \slashed{\mathrm{tr}} \chi - \frac{2}{v} \right|^2 | \underline{\mu} |^2 \\ \lesssim & \epsilon \sup_u \int_{H_u} \mathrm{w}^2 (u) v^2 \left| K - \frac{1}{v^2} \right|^2 \\ &+ \epsilon \sum_{k=0}^2 \left\| v ( v \slashed{\nabla} )^k \left( \slashed{\mathrm{tr}} \chi - \frac{2}{v} \right) \right\|_{L^{\infty}_{u,v} L^2 (S_{u,v} )}^2 \times \sup_v \left( \int_u \int_{S_{u,v}} v^4 | \underline{\mu} |^2 \right) ,
\end{align*}
where for the second term we used Sobolev's inequality \eqref{sobolev3} and the resulting term can be bounded by the bootstrap assumptions. Note that for higher derivatives we apply Sobolev's inequality \eqref{sobolev3} to the term with fewer derivatives and always evaluate $\underline{\mu}$ on an $H_u$ hypersurface. This is always possible as we can always make sure that no more than 3 angular derivatives are used on $\slashed{\mathrm{tr}} \chi - \frac{2}{v}$.

For the $II_2$ term we have that
\begin{align*}
II_2 = & \int_{D_{u,v}} v^3 \left\langle K - \frac{1}{v^2} , - ( \zeta + 2 \underline{\eta} ) \cdot \beta \right\rangle \lesssim   \int_{D_{u,v}} \frac{v^3}{v^2} \left| K - \frac{1}{v^2} \right|^2 + \int_{D_{u,v}} v^5 | \underline{\eta} |^2 | \beta |^2 \\ \lesssim & \epsilon \sup_v \int_{\underline{H}_v} v^3 \left| K - \frac{1}{v^2} \right|^2 + \sum_{k=0}^2  \|v (v \slashed{\nabla} )^k \underline{\eta} \|^2_{L^{\infty}_{u,v} L^2 (S_{u,v} )} \sup_u \int_{H_u} v | \beta |^2 ,
\end{align*}
where we used Sobolev's inequality \eqref{sobolev3}, and we note that in the last expression the first term can be absorbed by the left-hand side of \eqref{boot:riemann1} after achieving smallness in $\epsilon$ through an appropriate choice of $v_0$, while the last term can be bounded by the bootstrap assumptions.

For the $III_2$ term we have that
\begin{align*}
III_2 = & \int_{D_{u,v}}  v^3 \left\langle K - \frac{1}{v^2} , \frac{1}{2} \hat{\chi} \cdot ( \slashed{\nabla} \hat{\otimes} \underline{\eta} ) \right\rangle \lesssim   \int_{D_{u,v}} \frac{v^3}{v^2} \left| K - \frac{1}{v^2} \right|^2 + \int_{D_{u,v}} v^5 | \hat{\chi} |^2 | \slashed{\nabla} \underline{\eta} |^2 \\ \lesssim & \epsilon \sup_v \int_{\underline{H}_v} v^3 \left| K - \frac{1}{v^2} \right|^2 + \left( \int_v \sup_u v |\hat{\chi} |^2 \right) \times \left( \sup_v \int_u \int_{S_{u,v}} v^2 | ( v \slashed{\nabla} ) \underline{\eta} |^2  \right) \\ \lesssim & \epsilon \sup_v \int_{\underline{H}_v} v^3 \left| K - \frac{1}{v^2} \right|^2 +  \left\| \frac{1}{\sqrt{v}} ( v \slashed{\nabla} )^k \hat{\chi} \right\|_{L^2_v L^{\infty}_u L^2 (S_{u,v} )}^2  \times \left( \sup_v \int_u \int_{S_{u,v}} v^2 | ( v \slashed{\nabla} ) \underline{\eta} |^2 \right) ,
\end{align*}
where in the last expression the first term can be absorbed by the left-hand side of \eqref{boot:riemann1} after achieving smallness in $\epsilon$ through an appropriate choice of $v_0$, and the last term (which can be bounded by the bootstrap assumptions) was obtained after applying Sobolev's inequality \eqref{sobolev3}.

The term $IV_2$ is given by
$$ IV_2 = \int_{D_{u,v}}  v^3 \left\langle K - \frac{1}{v^2} , \frac{1}{2} \hat{\chi} \cdot ( \underline{\eta} \hat{\otimes} \underline{\eta} ) \right\rangle $$
and can be treated similar to $III_2$ (actually it has better decay in $v$).

Finally for the term $V_2$ we have that
$$ V_2 =  \int_{D_{u,v}} v^3 \left\langle K - \frac{1}{v^2} , - \frac{1}{2} \left( \slashed{\mathrm{tr}} \chi - \frac{2}{v} \right) | \underline{\eta} |^2 - \frac{1}{2v} | \underline{\eta} |^2 \right\rangle ,  $$
and since $\slashed{\mathrm{tr}} \chi - \frac{2}{v}$ decays better than $\frac{1}{v}$, we focus only on the last part, and we have that
\begin{align*}
\int_{D_{u,v}} v^3 \left\langle K - \frac{1}{v^2} ,- \frac{1}{2v} | \underline{\eta} |^2 \right\rangle \lesssim & \int_{D_{u,v}} \frac{v^3}{v^2} \left| K - \frac{1}{v^2} \right|^2 + \int_{D_{u,v}} v^5 \frac{1}{v^2} | \underline{\eta} |^4 \\ \lesssim & \epsilon \sup_v \int_{\underline{H}_v} v^3 \left| K - \frac{1}{v^2} \right|^2 + \sum_{k=0}^1 \int_{u} \int_v  \frac{1}{v^3} \| v ( v \slashed{\nabla} )^k \underline{\eta} \|_{L^4 (S_{u,v} )}^4  \\ \lesssim & \epsilon \sup_v \int_{\underline{H}_v} v^3 \left| K - \frac{1}{v^2} \right|^2 + \epsilon \sum_{k=0}^1  \| v ( v \slashed{\nabla} )^k \underline{\eta} \|_{L^{\infty}_{u,v} L^4 (S_{u,v} )}^4 ,
\end{align*}
where once again in the last expression the first term can be absorbed by the left-hand side of \eqref{boot:riemann1} after achieving smallness in $\epsilon$ through an appropriate choice of $v_0$, and for the last term we used Sobolev's inequality \eqref{sobolev1} and achieved smallness in $\epsilon$ by appropriate choices of $U$ and $v_0$.

From the way that we dealt with the terms $I_2$, $II_2$, $III_2$, $IV_2$ and $V_2$ it is clear that we can work in the same way to obtain the analogous results for 
$$\int_{\underline{H}_v} v^3 \left| ( v \slashed{\nabla} ) K - \frac{1}{v^2} \right|^2 , \quad  \int_{\underline{H}_v} v^3 \left| ( v \slashed{\nabla} )^2 K - \frac{1}{v^2} \right|^2 \mbox{  and  } \int_{\underline{H}_v} v^3 \left| ( v \slashed{\nabla} )^3 K - \frac{1}{v^2} \right|^2 . $$
Finally the terms of $F_{\check{\sigma}}$ are similar to the terms of $F_{K - \frac{1}{v^2}}$ and can be treated in the exact same way.

\textbf{Riemann curvature components, 2. $\underline{\beta}$, $K - \frac{1}{v^2}$, $\check{\sigma}$:} We use again the energy inequalities of Proposition \ref{intbyparts} where we use the $\slashed{\nabla}_4$ equation \eqref{eq:ubeta44} for $\underline{\beta}$ and the $\slashed{\nabla}_3$ equations \eqref{eq:gauss3}, \eqref{eq:csigma3} for $K-\frac{1}{v^2}$ and $\check{\sigma}$ and we have that (arguing as before):
\begin{align}\label{boot:riemann2}
\int_{\underline{H}_{v_2}} \mathrm{w}^2 (u) v_2^2 | \underline{\beta} |^2 + \int_{H_{u_2}} \mathrm{w}^2 (u_2 ) v^2 | \left( K - \frac{1}{v^2} , \check{\sigma} \right) |^2 \lesssim & \int_{\underline{H}_{v_1}} \mathrm{w}^2 (u) v_1^2 | \underline{\beta} |^2 + \int_{H_{u_1}} \mathrm{w}^2 (u_1 ) v^2 | \left( K - \frac{1}{v^2} , \check{\sigma} \right) |^2 \\ + &  \int_{D_{u,v}} \mathrm{w}^2 (u) v^2 \left( \langle \underline{\beta} , G_{\underline{\beta}} \rangle + \left\langle K - \frac{1}{v^2} , G_{K-\frac{1}{v^2}} \right\rangle + \langle \check{\sigma} , G_{\check{\sigma}} \rangle \right) ,
\end{align}
where 
$$ \slashed{\nabla}_4 \underline{\beta} + \slashed{\mathrm{tr}} \chi \underline{\beta} - \slashed{\nabla} \left( K - \frac{1}{v^2} \right) -^{*} \slashed{\nabla} \check{\sigma} = G_{\underline{\beta}} , \quad  \slashed{\nabla}_3 \left( K - \frac{1}{v^2} \right) + \slashed{\mathrm{tr}} \underline{\chi} \left( K - \frac{1}{v^2} \right) - \slashed{\mathrm{div}} \underline{\beta} = G_{K-\frac{1}{v^2}} , \quad \slashed{\nabla}_3 \check{\sigma}  +\frac{3}{2} \slashed{\mathrm{tr}} \chi \check{\sigma} + \slashed{\mathrm{div}}^{*} \underline{\beta}= G_{\check{\sigma}} . $$
We will examine all the terms coming from these inhomogeneities one by one. We start with $G_{\underline{\beta}}$ and we have that
$$ \int_{D_{u,v}} \mathrm{w}^2 (u) v^2 \langle \underline{\beta} , G_{\underline{\beta}} \rangle = B_1 + B_2 + B_3 + B_4 + B_5 + B_6 + B_7 + B_8 + B_9 + B_{10} , $$
where 
$$ \mbox{$B_1$ involves the term $\frac{6}{v^2} \underline{\eta}$,} $$ 
$$ \mbox{$B_2$ involves the term $2 \hat{\underline{\chi}} \cdot \beta$,} $$
$$ \mbox{ $B_3$ involves the terms $ \underline{\eta} \left( K - \frac{1}{v^2} \right)$ and $\eta \check{\sigma}$,} $$
$$ \mbox{ $B_4$ involves the terms $ \slashed{\nabla} ( \hat{\chi} \cdot \hat{\underline{\chi}} )$ and $ \slashed{\nabla} ( \hat{\chi} \wedge \hat{\underline{\chi}} )$,} $$
$$ \mbox{ $B_5$ involves the terms $\underline{\eta} \hat{\chi} \cdot \hat{\underline{\chi}}$ and $\underline{\eta} \hat{\chi} \wedge \hat{\underline{\chi}}$,} $$
$$ \mbox{ $B_6$ involves the terms $\slashed{\nabla} \left(  \slashed{\mathrm{tr}} \chi - \frac{2}{v} \right) \cdot \left( \slashed{\mathrm{tr}} \underline{\chi} + \frac{2}{v} \right)$ and $\slashed{\nabla} \left(  \slashed{\mathrm{tr}} \underline{\chi} + \frac{2}{v} \right) \cdot \left( \slashed{\mathrm{tr}} \chi - \frac{2}{v} \right)$,} $$
$$ \mbox{ $B_7$ involves the terms $\slashed{\nabla} \left(  \slashed{\mathrm{tr}} \chi - \frac{2}{v} \right) \cdot \frac{2}{v} $ and $\slashed{\nabla} \left(  \slashed{\mathrm{tr}} \underline{\chi} + \frac{2}{v} \right) \cdot \frac{2}{v}$,} $$
$$ \mbox{ $B_8$ involves the terms $\underline{\eta} \left(  \slashed{\mathrm{tr}} \chi - \frac{2}{v} \right) \cdot \frac{2}{v}$ and $\underline{\eta} \left(  \slashed{\mathrm{tr}} \underline{\chi} + \frac{2}{v} \right) \cdot \frac{2}{v}$,} $$
$$ \mbox{ $B_9$ involves the term $\underline{\eta} \left( \slashed{\mathrm{tr}} \chi - \frac{2}{v} \right) \cdot \left( \slashed{\mathrm{tr}} \underline{\chi} + \frac{2}{v} \right)$,} $$
$$ \mbox{  and $B_{10}$ involves the term $2 \omega \underline{\beta}$.} $$

For the $B_1$ term we have that
\begin{align*}
B_1 = & \int_{D_{u,v}} \mathrm{w}^2 (u) v^2 \left\langle K - \frac{1}{v^2} , \frac{6}{v^2} \underline{\eta} \right\rangle \lesssim \int_{D_{u,v}} \mathrm{w}^2 (u) \frac{v^2}{v^2} \left| K - \frac{1}{v^2} \right|^2 + \int_{D_{u,v}} \mathrm{w}^2 (u)  \frac{v^4}{v^4} | \underline{\eta} |^2 \\ \lesssim & \epsilon \sup_v \int_{\underline{H}_v} v^2 \left| K - \frac{1}{v^2} \right|^2 + \epsilon \int_v \frac{1}{v^2} \| v \eta \|_{L^{\infty}_u L^2 (S_{u,v})}^2 \\ \lesssim & \epsilon \sup_v \int_{\underline{H}_v} v^2 \left| K - \frac{1}{v^2} \right|^2 + \epsilon  \| v \eta \|_{L^{\infty}_{u,v} L^2 (S_{u,v})}^2 ,
\end{align*}
where we used the boundedness of $\mathrm{w}$, we obtained smallness in $\epsilon$ by an appropriate choice of $U$ and $v_0$, and finally we have that the first term of the last expression can be absorbed by the left-hand side of \eqref{boot:riemann2} while the second term can be bounded by the bootstrap assumptions.

For the term $B_2$ we have that:
\begin{align*}
B_2 = \int_{D_{u,v}} \mathrm{w}^2 (u) v^2 \langle \underline{\beta} , 2 \underline{\hat{\chi}} \cdot \beta \rangle \lesssim & \epsilon \int_{D_{u,v}} \mathrm{w}^2 (u) \frac{v^2}{v^2} | \underline{\beta} |^2 + \frac{1}{\epsilon} \int_{D_{u,v}} \mathrm{w}^2 (u) v^4 |\underline{\hat{\chi}} |^2 | \beta |^2 \\ \lesssim & \epsilon \sup_v \int_{\underline{H}_v} \mathrm{w}^2 (u) v^2 | \underline{\beta} |^2 + \frac{1}{\epsilon} \sum_{i=0}^2 \| \mathrm{w} (u) ( v \slashed{\nabla} )^i \underline{\hat{\chi}} \|_{L^2_u L^{\infty}_v L^2 (S_{u,v} )}^2 \sup_u \int_{H_u} v^2 | \beta |^2 . 
\end{align*}
Notice that in the last line the term
$$  \sup_u \int_{H_u} v^2 | \beta |^2 $$
is one power of $v$ better than the energy that $\beta$ is bounded by the bootstrap assumptions.

For the $B_3$ term we have that
\begin{align*}
B_3 = & \int_{D_{u,v}} \mathrm{w}^2 (u) v^2 \left\langle \underline{\beta} , 3 \underline{\eta} \left( K - \frac{1}{v^2} \right) + 3^{*} \underline{\eta} \check{\sigma} \right\rangle \\ \lesssim & \int_{D_{u,v}} \mathrm{w}^2 (u) \frac{v^2}{v^2} | \underline{\beta} |^2 +  \int_{D_{u,v}} \mathrm{w}^2 (u) v^4 | \underline{\eta} |^2 \left( \left| K - \frac{1}{v^2} \right|^2 + | \check{\sigma} |^2 \right) \\ \lesssim & \epsilon \sup_v \int_{\underline{H}_v} \mathrm{w}^2 (u) v^2 | \underline{\beta} |^2 + \sum_{k=0}^2 \int_{D_{u,v}} \| v ( v \slashed{\nabla} )^k \underline{\eta} \|_{L^2 (S_{u,v} )}^2 \left( \left| K - \frac{1}{v^2} \right|^2 + | \check{\sigma} |^2 \right) \\ \lesssim & \epsilon \sup_v \int_{\underline{H}_v} \mathrm{w}^2 (u) v^2 | \underline{\beta} |^2 + \epsilon \sum_{k=0}^2 \| v ( v \slashed{\nabla} )^k \underline{\eta} \|_{L^{\infty}_{u,v} L^2 (S_{u,v} )}^2 \sup_u \int_{H_u}\left( \left| K - \frac{1}{v^2} \right|^2 + | \check{\sigma} |^2 \right) ,
\end{align*}
where the smallness in $\epsilon$ was achieved by the choice of $v_0$ for the first term (which can then be absorbed by the left-hand side of \eqref{boot:riemann2}) and by the choice of $U$ for the second term (where we used Sobolev's inequality \eqref{sobolev3}, and we note that it can be bounded by the bootstrap assumptions -- note that the very last term has a better weight in $v$ than required).

For the term $B_4$ we have that
$$ B_2 = \int_{D_{u,v}} \mathrm{w}^2 (u) v^2 \langle \underline{\beta} , - \frac{1}{2} ( \slashed{\nabla} ( \hat{\chi} \cdot \hat{\underline{\chi}} ) +^{*} \slashed{\nabla} ( \hat{\chi} \wedge \hat{\underline{\chi}} ) ) \rangle , $$
so we can just write that
\begin{align*}
B_4 \simeq & \int_{D_{u,v}} \mathrm{w}^2 (u) v^2 \langle \underline{\beta} ,   \hat{\chi} \cdot \slashed{\nabla} \underline{\chi} +  \slashed{\nabla} \hat{\chi} \cdot \underline{\chi} \rangle  \lesssim \int_{D_{u,v}} v^2 | \underline{\beta} | ( | \hat{\chi} | | \slashed{\nabla} \underline{\chi} | + | \slashed{\nabla} \hat{\chi} | | \underline{\hat{\chi}} | ) \\ \lesssim & \| \hat{\chi} \|_{L^1_v L^{\infty}_u L^{\infty} (S_{u,v} )} \sup_v \int_{\underline{H}_v}  \mathrm{w}^2 (u) v^2 | \underline{\beta} | | \slashed{\nabla} \underline{\hat{\chi}} | + \| ( v \slashed{\nabla} ) \hat{\chi} \|_{L^1_v L^{\infty}_u L^{\infty} (S_{u,v} )} \sup_v \int_{\underline{H}_v} \mathrm{w}^2 (u) v | \underline{\beta} | | \underline{\hat{\chi}} | \\ \lesssim & \| \hat{\chi} \|_{L^1_v L^{\infty}_u L^{\infty} (S_{u,v} )} \sup_v \left( \int_{\underline{H}_v} \mathrm{w}^2 (u) v^2 | \underline{\beta} |^2 \right)^{1/2}  \left( \int_{\underline{H}_v} \mathrm{w}^2 (u) v^2 | (v \slashed{\nabla} ) \underline{\hat{\chi}} |^2 \right)^{1/2} \\ & + \| ( v \slashed{\nabla} ) \hat{\chi} \|_{L^1_v L^{\infty}_u L^{\infty} (S_{u,v} )} \sup_v \left( \int_{\underline{H}_v} \mathrm{w}^2 (u) v^2 | \underline{\beta} |^2 \right)^{1/2}  \left( \int_{\underline{H}_v} \mathrm{w}^2 (u) v^2 | \underline{\hat{\chi}} |^2 \right)^{1/2} ,
\end{align*}
 and all the terms in the last expression can be bounded by the bootstrap assumptions.
 
For the $B_5$ term we have that
\begin{align*}
B_5 = & \int_{D_{u,v}} \mathrm{w}^2 (u) v^2 \left\langle \underline{\beta} , -\frac{3}{2} [ \underline{\eta} ( \hat{\chi} \cdot \hat{\underline{\chi}} ) -^{*} \underline{\eta} ( \hat{\chi} \wedge \hat{\underline{\chi}} ) ] \right\rangle \\ \lesssim & \int_{D_{u,v}} \mathrm{w}^2 (u) \frac{v^2}{v^2} | \underline{\beta} |^2 +  \int_{D_{u,v}} \mathrm{w}^2 (u) v^4 | \underline{\eta} |^2 | \hat{\chi} |^2 | \hat{\underline{\chi}} |^2 \\ \lesssim & \epsilon \sup_v \int_{\underline{H}_v} | \underline{\eta} |^2 + \left( \int_v \sup_u v^4 | \hat{\chi} |^2 | \underline{\eta} |^2 \right) \times \sup_v\left( \int_u \int_{S_{u,v}} \mathrm{w}^2 (u) | \hat{\underline{\chi}} |^2 \right) \\ \lesssim & \epsilon \sup_v \int_{\underline{H}_v} | \underline{\eta} |^2 + \sum_{k=0}^2 \left( \int_v \sup_u  | \hat{\chi} |^2 \| v \underline{\eta} \|_{L^2 (S_{u,v} )}^2 \right) \times \sup_v\left( \int_u \int_{S_{u,v}} \mathrm{w}^2 (u) | \hat{\underline{\chi}} |^2 \right) \\ \lesssim & \epsilon \sup_v \int_{\underline{H}_v} | \underline{\eta} |^2 + \sum_{k=0}^2   \left\| \frac{1}{\sqrt{v}} \hat{\chi} \right\|_{L^2_v L^{\infty}_u L^2 (S_{u,v} )}^2 \| v \underline{\eta} \|_{L^{\infty}_{u,v} L^2 (S_{u,v} )}^2 \times \sup_v\left( \int_u \int_{S_{u,v}} \mathrm{w}^2 (u) | \hat{\underline{\chi}} |^2 \right) ,
\end{align*} 
and the first term of the last expression can be absorbed by the left-hand side of \eqref{boot:riemann2} while the second one can be bounded by the bootstrap assumptions.

For the $B_7$ term we have that
\begin{align*}
B_7 = & \int_{D_{u,v}} \mathrm{w}^2 (u) v^2 \left\langle \underline{\beta} , -\frac{1}{4} \left[ \slashed{\nabla} \left( \slashed{\mathrm{tr}} \chi - \frac{2}{v} \right) \frac{2}{v} + \slashed{\nabla} \left( \slashed{\mathrm{tr}} \underline{\chi} + \frac{2}{v} \right) \frac{2}{v} \right] \right\rangle \\ \lesssim & \int_{D_{u,v}} \mathrm{w}^2 (u) \frac{v^2}{v^2} | \underline{\beta} |^2 +  \int_{D_{u,v}} \mathrm{w}^2 (u) \left| ( v \slashed{\nabla} ) \left( \slashed{\mathrm{tr}} \chi - \frac{2}{v} \right) \right|^2 + \int_{D_{u,v}} \mathrm{w}^2 (u) \left| ( v \slashed{\nabla} ) \left( \slashed{\mathrm{tr}} \underline{\chi} + \frac{2}{v} \right) \right|^2 \\ \lesssim & \epsilon \sup_v \int_{\underline{H}_v} \mathrm{w}^2 (u) v^2 | \underline{\beta} |^2 + \sup_u \int_v \int_{S_{u,v}} \left| ( v \slashed{\nabla} ) \left( \slashed{\mathrm{tr}} \chi - \frac{2}{v} \right) \right|^2  + \epsilon \sup_v \int_u \int_{S_{u,v}} \mathrm{w}^2 (u) v^2 \left| ( v \slashed{\nabla} ) \left( \slashed{\mathrm{tr}} \underline{\chi} + \frac{2}{v} \right) \right|^2\\ \lesssim & \epsilon \sup_v \int_{\underline{H}_v} \mathrm{w}^2 (u) v^2 | \underline{\beta} |^2 + \epsilon \left\| v ( v \slashed{\nabla} ) \left( \slashed{\mathrm{tr}} \chi - \frac{2}{v} \right) \right\|_{L^2 (S_{u,v} )}^2  + \epsilon \sup_v \int_u \int_{S_{u,v}} \mathrm{w}^2 (u) v^2 \left| ( v \slashed{\nabla} ) \left( \slashed{\mathrm{tr}} \underline{\chi} + \frac{2}{v} \right) \right|^2,  
\end{align*}
where the first term of the last expression can be absorbed by the left-hand side of \eqref{boot:riemann2} while the second one can be bounded by the bootstrap assumptions. Note that we used here the improved decay of $\slashed{\mathrm{tr}} \chi - \frac{2}{v}$ compared to $\hat{\chi}$. The weighted $L^1 L^{\infty}_u L^2 (S_{u,v} )$ estimates analogous to the ones for $\hat{\chi}$, although they hold true in the case of $\slashed{\mathrm{tr}} \chi - \frac{2}{v}$, cannot be used in this situation as the term is linear in $\slashed{\mathrm{tr}} \chi - \frac{2}{v}$ and for the higher 2nd and 3rd derivatives on $\underline{\beta}$ the Sobolev inequalities present an irreparable loss of derivatives. The $B_6$ term can be treated similarly and it has better decay in $v$, and so are the $B_8$ and $B_9$ term.

For the $B_{10}$ term we have that
\begin{align*}
B_{10} = &  \int_{D_{u,v}} \mathrm{w}^2 (u) v^2 \langle \underline{\beta} , 2 \omega \underline{\beta} \rangle  \\ \lesssim & \int_{D_{u,v}} \mathrm{w}^2 (u) \frac{v^2}{v^2} | \underline{\beta} |^2 + \int_{D_{u,v}} \mathrm{w}^2 (u) v^4 |\omega |^2 | \underline{\beta} |^2  \\ \lesssim & \sup_v \int_{\underline{H}_v} \mathrm{w}^2 (u) v^2 | \underline{\beta} |^2 + \left( \int_v \sup_u v^2 | \omega |^2 \right) \times \sup_v \left( \int_u \int_{S_{u,v}} \mathrm{w}^2 (u) v^2 |\underline{\beta} |^2 \right)  \\ \lesssim & \sup_v \int_{\underline{H}_v} \mathrm{w}^2 (u) v^2 | \underline{\beta} |^2 + \sum_{k=0}^2 \| ( v \slashed{\nabla} )^k \omega \|_{L^2_v L^{\infty}_u L^2 (S_{u,v} )}^2 \sup_v \left( \int_u \int_{S_{u,v}} \mathrm{w}^2 (u) v^2 |\underline{\beta} |^2 \right) ,  
\end{align*}
where we used Sobolev's inequality \eqref{sobolev3}, and as before the first term of the last expression can be absorbed by the left-hand side of \eqref{boot:riemann2} while the second one can be bounded by the bootstrap assumptions. Note that the $v$-weight in $\omega$ is one power better than required.

For the higher derivatives
$$ \int_{\underline{H}_v} \mathrm{w}^2 (u) v^2 | (v \slashed{\nabla} ) \underline{\beta} |^2 , \quad \int_{\underline{H}_v} \mathrm{w}^2 (u) v^2 | ( v \slashed{\nabla} )^2 \underline{\beta} |^2 , \mbox{  and  }  \int_{\underline{H}_v} \mathrm{w}^2 (u) v^2 | ( v \slashed{\nabla} )^3 \underline{\beta} |^2 , $$
we can work similarly as above using Sobolev's inequality \eqref{sobolev3} when needed, apart from the case of the term $B_4$. For $B_4$ the case of one angular derivative can be treated similarly to the uncommuted case, but for the second and third derivatives cannot be treated in the same way. We demonstrate how to deal with the case of the highest 3rd derivative:
\begin{align*}
\int_{D_{u,v}} \mathrm{w}^2 (u) v^2 \langle & (v \slashed{\nabla} )^3 \underline{\beta} ,  \frac{1}{v} (v \slashed{\nabla} )^4 ( \hat{\chi} \cdot \hat{\underline{\chi}} ) \rangle \\ \simeq  & \int_{D_{u,v}} \mathrm{w}^2 (u) v^2 \langle (v \slashed{\nabla} )^3 \underline{\beta} , \frac{1}{v} \left( [ (v \slashed{\nabla} )^4  \hat{\chi} ]  \hat{\underline{\chi}} +[ (v \slashed{\nabla} )^3  \hat{\chi} ]  [ ( v \slashed{\nabla} )\hat{\underline{\chi}} ] +  [ (v \slashed{\nabla} )^2  \hat{\chi} ]  [ ( v \slashed{\nabla} )^2\hat{\underline{\chi}} ] + [ (v \slashed{\nabla} )  \hat{\chi} ]  [ ( v \slashed{\nabla} )^3 \hat{\underline{\chi}} ] + \hat{\chi} [ ( v \slashed{\nabla} )^4 ] \hat{\underline{\chi}} \right) \rangle .
\end{align*}
The last three terms of the last expression can be treated as in the uncommuted case. From the other two terms we show how to deal with the very first one (the other one is similar), we have that
\begin{align*}
\int_{D_{u,v}} \mathrm{w}^2 (u) v^2 \langle (v \slashed{\nabla} )^3 \underline{\beta} , \frac{1}{v} \left( [ (v \slashed{\nabla} )^4  \hat{\chi} ]  \hat{\underline{\chi}} \right) \rangle \lesssim & \int_{D_{u,v}} \mathrm{w}^2 (u) v | ( v \slashed{\nabla} )^3 \underline{\beta} | | ( v \slashed{\nabla} )^4 \hat{\chi} | | \hat{\underline{\chi}} | \\ \lesssim & \int_{D_{u,v}} \mathrm{w}^2 (u) v | ( v \slashed{\nabla} )^3 \underline{\beta} | \left| ( v \slashed{\nabla} )^4 \left( \slashed{\mathrm{tr}} \chi - \frac{2}{v} \right) \right| | \hat{\underline{\chi}} |\\ & + \int_{D_{u,v}} \mathrm{w}^2 (u) v^2 | ( v \slashed{\nabla} )^3 \underline{\beta} | | ( v \slashed{\nabla} )^3 ( \eta \hat{\chi} ) | | \hat{\underline{\chi}} |   \\ & + \int_{D_{u,v}} \mathrm{w}^2 (u) v^2 | ( v \slashed{\nabla} )^3 \underline{\beta} | | ( v \slashed{\nabla} )^3 ( \underline{\eta} \hat{\chi} ) | | \hat{\underline{\chi}} | \\ & + \int_{D_{u,v}} \mathrm{w}^2 (u) v^2 | ( v \slashed{\nabla} )^3 \underline{\beta} | \left| ( v \slashed{\nabla} )^3 \left( \eta \left( \slashed{\mathrm{tr}} \chi - \frac{2}{v} \right) \right) \right| | \hat{\underline{\chi}} |  \\ & + \int_{D_{u,v}} \mathrm{w}^2 (u) v^2 | ( v \slashed{\nabla} )^3 \underline{\beta} | \left| ( v \slashed{\nabla} )^3 \left( \underline{\eta} \left( \slashed{\mathrm{tr}} \chi - \frac{2}{v} \right) \right) \right| | \hat{\underline{\chi}} | \\ & + \int_{D_{u,v}} \mathrm{w}^2 (u) v | ( v \slashed{\nabla} )^3 \underline{\beta} | \left( | (v \slashed{\nabla} )^3 \eta | + | ( v \slashed{\nabla} )^3 \underline{\eta} | \right)  | \hat{\underline{\chi}} |  \\ & + \int_{D_{u,v}} \mathrm{w}^2 (u) v^2 | ( v \slashed{\nabla} )^3 \underline{\beta} | | ( v \slashed{\nabla} )^3 \beta |  | \hat{\underline{\chi}} | \\ \doteq & b1 + b2 + b3 + b4 + b5 + b6 + b7 ,
\end{align*}
where we used the elliptic equation \eqref{eq:codchi}. Let us consider these terms one by one again. We have that
\begin{align*}
b1 = & \int_{D_{u,v}} \mathrm{w}^2 (u) v | ( v \slashed{\nabla} )^4 \underline{\beta} | \left| ( v \slashed{\nabla} )^4 \left( \slashed{\mathrm{tr}} \chi - \frac{2}{v} \right) \right| | \hat{\underline{\chi}} | \\ \lesssim & \int_{D_{u,v}} \mathrm{w}^2 (u) \frac{v^2}{v^2} | ( v \slashed{\nabla} )^3 \underline{\beta} |^2 + \int_{D_{u,v}} \mathrm{w}^2 (u) v^2 | \left| ( v \slashed{\nabla} )^4 \left( \slashed{\mathrm{tr}} \chi - \frac{2}{v} \right) \right|^2 | \hat{\underline{\chi}} |^2 \\ \lesssim & \epsilon \sup_v \int_{\underline{H}_v} \mathrm{w}^2 (u) v^2 | ( v \slashed{\nabla} )^3 \underline{\beta} |^2 + \left( \int_u \sup_v \mathrm{w}^2 (u) v^2 | \hat{\underline{\chi}} |^2 \right) \cdot \sup_u \left( \int_v \int_{S_{u,v}} \left| ( v \slashed{\nabla} )^4 \left( \slashed{\mathrm{tr}} \chi - \frac{2}{v} \right) \right|^2 \right)  \\ \lesssim & \epsilon \sup_v \int_{\underline{H}_v} \mathrm{w}^2 (u) v^2 | ( v \slashed{\nabla} )^3 \underline{\beta} |^2 + \sum_{k=0}^2 \left( \int_u \sup_v \| \mathrm{w} (u)   \hat{\underline{\chi}} \|_{L^2 (S_{u,v})}^2 \right) \cdot \sup_u \left( \int_v \int_{S_{u,v}} \left| ( v \slashed{\nabla} )^4 \left( \slashed{\mathrm{tr}} \chi - \frac{2}{v} \right) \right|^2 \right)  ,
\end{align*}
and now the first term can be absorbed by the left-hand side of \eqref{boot:riemann2} (where we take 3 angular derivatives), while the last one can be bounded by the bootstrap assumptions.

For the $b2$ term we have that
\begin{align*}
b2 = & \int_{D_{u,v}} \mathrm{w}^2 (u) v^2 | ( v \slashed{\nabla} )^3 \underline{\beta} | | (v \slashed{\nabla} ) (\eta \hat{\chi} ) | |\hat{\underline{\chi}} | \\ \lesssim & \int_{D_{u,v}} \mathrm{w}^2 (u) \frac{v^2}{v^2} | ( v \slashed{\nabla} )^3 \underline{\beta} |^2 + \int_{D_{u,v}} \mathrm{w}^2 (u) v^4| (v \slashed{\nabla} ) (\eta \hat{\chi} ) |^2 |\hat{\underline{\chi}} |^2  \\ \lesssim & \int_{D_{u,v}} \mathrm{w}^2 (u) \frac{v^2}{v^2} | ( v \slashed{\nabla} )^3 \underline{\beta} |^2 + \int_{D_{u,v}} \mathrm{w}^2 (u) v^4 \left( | (v \slashed{\nabla} )^3 (\eta ) |^2 | \hat{\chi}  |^2 + | ( v \slashed{\nabla} )^2 \eta |^2 | ( v \slashed{\nabla} ) \hat{\chi} |^2 +   | ( v \slashed{\nabla} ) \eta |^2 | ( v \slashed{\nabla} )^2 \hat{\chi} |^2 + |  \eta |^2 | ( v \slashed{\nabla} )^3 \hat{\chi} |^2 \right)  |\hat{\underline{\chi}} |^2 \\ \lesssim & \epsilon \sup_v \int_{\underline{H}_v} \mathrm{w}^2 (u) v^2 | ( v \slashed{\nabla} )^3 \underline{\beta} |^2 + \sum_{k_1 + k_2 = 3} \int_{D_{u,v}} \mathrm{w}^2 (u) v^4 | ( v \slashed{\nabla} )^{k_1} \eta |^2 | ( v \slashed{\nabla} )^{k_2} \hat{\chi} |^2 | \hat{\underline{\chi}} |^2  \\ \lesssim & \epsilon \sup_v \int_{\underline{H}_v} \mathrm{w}^2 (u) v^2 | ( v \slashed{\nabla} )^3 \underline{\beta} |^2 + \sum_{k_1 + k_2 = 3} \left( \int_u \sup_v \mathrm{w}^2 (u) v^2 | \hat{\underline{\chi}} |^2 \right) \times \sup_u \left( \int_v \int_{S_{u,v}} v^2 | ( v \slashed{\nabla} )^{k_1} \eta |^2 | ( v \slashed{\nabla} )^{k_2} \hat{\chi} |^2 \right) \\ \lesssim & \epsilon \sup_v \int_{\underline{H}_v} \mathrm{w}^2 (u) v^2 | ( v \slashed{\nabla} )^3 \underline{\beta} |^2 \\ & + \sum_{0 \leq k \leq 2 , 0 \leq m_1 , m_2 \leq 3 } \left( \int_u \sup_v \| \mathrm{w} (u)  ( v \slashed{\nabla} )^k \hat{\underline{\chi}} \|_{L^2 (S_{u,v} )}^2 \right) \times \| v ( v \slashed{\nabla} )^{m_1} \eta \|_{L^{\infty}_{u,v} L^2 (S_{u,v} )}^2 \sup_u \left( \int_v \int_{S_{u,v}} \frac{1}{v^2} | ( v \slashed{\nabla} )^{m_2} \hat{\chi} |^2 \right) ,
\end{align*}
where once again the first term can be absorbed by the left-hand side of \eqref{boot:riemann2} (with 3 angular derivatives) and the second term can be absorbed by the boundedness assumptions, where we used Sobolev's inequality on both the $\eta$ and $\hat{\chi}$ terms if they were carrying only one angular derivative. The $b3$ term can be treated similarly by treating $\underline{\eta}$ in the same way that $\eta$ was treated above.

The terms $b4$ and $b5$ are similar to $b1$ but with better decay in $v$, requiring the use of Sobolev's inequality \eqref{sobolev3} as in $b2$. For the $b6$ term we examine the part that contains $\eta$ (the other part that contains $\underline{\eta}$ can be treated similarly) and we have that
\begin{align*}
\int_{D_{u,v}} \mathrm{w}^2 (u) v | ( v \slashed{\nabla} )^3 \underline{\beta} | | (v \slashed{\nabla} )^3 \eta| |\hat{\underline{\chi}} | \lesssim & \int_{D_{u,v}} \mathrm{w}^2 (u) \frac{v^2}{v^2} | ( v \slashed{\nabla} )^3 \underline{\beta} |^2 + \int_{D_{u,v}} \mathrm{w}^2 (u) v^2 | ( v \slashed{\nabla} )^3 \eta |^2 | \hat{\underline{\chi}} |^2 \\ \lesssim & \epsilon \sup_v \int_{\underline{H}_v} \mathrm{w}^2 (u) v^2 | ( v \slashed{\nabla} )^3 \underline{\beta} |^2 + \left( \int_u \sup_v v^2 |\hat{\underline{\chi}} |^2 \right) \times \sup_u \left( \int_v \int_{S_{u,v}} | ( v \slashed{\eta} )^3 \eta |^2 \right) \\ \lesssim & \epsilon \sup_v \int_{\underline{H}_v} \mathrm{w}^2 (u) v^2 | ( v \slashed{\nabla} )^3 \underline{\beta} |^2 + \sum_{k=0}^2 \left( \int_u \sup_v  \|( v \slashed{\nabla} )^k \hat{\underline{\chi}} \|_{L^2 (S_{u,v} )}^2 \right) \times \sup_u \left( \int_v \int_{S_{u,v}} | ( v \slashed{\eta} )^3 \eta |^2 \right) ,
\end{align*}
where we used Sobolev's inequality \eqref{sobolev3}, and again the first term can be absorbed by the left-hand side of \eqref{boot:riemann2} while the second term can be bounded by the bootstrap assumptions.

Finally for the term $b7$ we have that
\begin{align*}
b7 = & \int_{D_{u,v}} \mathrm{w}^2 (u) v^2 | ( v \slashed{\nabla} )^3 \underline{\beta} | | (v \slashed{\nabla} )^3 \beta| |\hat{\underline{\chi}} | \\ \lesssim & \int_{D_{u,v}} \mathrm{w}^2 (u) \frac{v^2}{v^2} | ( v \slashed{\nabla} )^3 \underline{\beta} |^2 + \int_{D_{u,v}} \mathrm{w}^2 (u) v^4 | ( v \slashed{\nabla} )^3 \beta |^2 | \hat{\underline{\chi}} |^2 \\ \lesssim & \sup_v \int_{\underline{H}_v} \mathrm{w}^2 (u) v^2 | ( v \slashed{\nabla} )^3 \underline{\beta} |^2 + \left( \int_u \sup_v v^2 | \hat{\underline{\chi}} |^2 \right) \times \sup_u \left( \int_v \int_{S_{u,v}} v^2 | ( v \slashed{\nabla} )^3 \beta |^2 \right)  \\ \lesssim & \sup_v \int_{\underline{H}_v} \mathrm{w}^2 (u) v^2 | ( v \slashed{\nabla} )^3 \underline{\beta} |^2 + \sum_{k=0}^2 \left( \int_u \sup_v \| ( v \slashed{\nabla} )^k \hat{\underline{\chi}} \|_{L^2 (S_{u,v} )}^2 \right) \times \sup_u \| v ( v \slashed{\nabla} )^3 \beta \|_{L^2 (H_u)}^2 ,
\end{align*} 
where we note that not only the very last term can be bounded by the bootstrap assumptions but the weight in $v$ in the term involving $\beta$ is one power better than required.

We turn to $G_{K-\frac{1}{v^2}}$ and we have that
$$ \int_{D_{u,v}} \mathrm{w}^2 (u) v^2 \left\langle K - \frac{1}{v^2} , G_{K-\frac{1}{v^2}} \right\rangle = C_1 + C_2 + C_3 + C_4 + C_5 + C_6 + C_7 + C_8 . $$

For the $C_1$ term we have that:
\begin{align*}
C_1 = &\int_{D_{u,v}} \mathrm{w}^2 (u) v^2 \left\langle K - \frac{1}{v^2} , \frac{1}{v^2} \mathrm{tr} \underline{\chi} \right\rangle \\ \lesssim & \int_{D_{u,v}} \mathrm{w}^2 (u) v^2 \left| K - \frac{1}{v^2} \right|^2 + \int_{D_{u,v}} \mathrm{w}^2 (u) v^2 \frac{1}{v^4} | \mathrm{tr} \underline{\chi} |^2 \\ \lesssim & \epsilon \sup_u \int_{H_u} \mathrm{w}^2 (u) v^2 \left| K - \frac{1}{v^2} \right|^2 +  \epsilon \sup_v \int_{\underline{H}_v} \mathrm{w}^2 (u) | \mathrm{tr} \underline{\chi} |^2 ,
\end{align*}
where in the last term we achieved smallness in $\epsilon$ by an appropriate choice of $v_0$ and we note that now this term can be bounded by the bootstrap assumption for $\slashed{\mathrm{tr}} \underline{\chi} + \frac{2}{v}$. 

For the $C_2$ term we have that
\begin{align*}
C_2 = & \int_{D_{u,v}} \mathrm{w}^2 (u) v^2 \left\langle K - \frac{1}{v^2} , - ( \zeta - 2 \eta ) \underline{\beta} \right\rangle \\ \lesssim & \int_{D_{u,v}} \mathrm{w}^2 (u) v^2 \left| K - \frac{1}{v^2} \right|^2 + \int_{D_{u,v}} \mathrm{w}^2 (u) v^2 ( | \eta |^2 + | \underline{\eta} |^2 ) | \underline{\beta} |^2 \\ \lesssim & \epsilon \sup_u \int_{H_u} \mathrm{w}^2 (u) v^2 \left| K - \frac{1}{v^2} \right|^2 + \left( \int_v \sup_u ( | \eta |^2 + | \underline{\eta} |^2 ) \right) \times  \sup_v \left( \int_u \int_{S_{u,v}} \mathrm{w}^2 (u) v^2 | \underline{\beta} |^2 \right)  \\ \lesssim & \epsilon \sup_u \int_{H_u} \mathrm{w}^2 (u) v^2 \left| K - \frac{1}{v^2} \right|^2 +   \epsilon \sum_{k=0}^2 ( \| v (v \slashed{\nabla} )^k \eta \|_{L^{\infty}_{u,v} L^2 (S_{u,v} )}^2  + \| v (v \slashed{\nabla} )^k \underline{\eta} \|_{L^{\infty}_{u,v} L^2 (S_{u,v} )}^2 ) \times \sup_v \left( \int_u \int_{S_{u,v}} \mathrm{w}^2 (u) v^2 | \underline{\beta} |^2 \right) ,
\end{align*}
where for the last term (which can be bounded by the bootstrap assumptions) we used Sobolev's inequality \eqref{sobolev3} and the smallness in $\epsilon$ was achieved by the choice of $v_0$.

For the $C_3$ term we have that
\begin{align*}
C_3 = & \int_{D_{u,v}} \mathrm{w}^2 (u) v^2 \left\langle K - \frac{1}{v^2} , \frac{1}{2} \hat{\underline{\chi}} \cdot ( \slashed{\nabla} \hat{\otimes} \eta ) \right\rangle \\ \lesssim &\int_{D_{u,v}} \frac{v^3}{v^2} \left| K - \frac{1}{v^2} \right|^2 + \int_{D_{u,v}} \mathrm{w}^4 (u) v  | \hat{\underline{\chi}} |^2 | | ( v \slashed{\nabla} ) \eta |^2 \\ \lesssim & \epsilon \sup_v \int_{\underline{H}_v}  v^3 \left| K - \frac{1}{v^2} \right|^2 + \sum_{k=0}^2 \int_{D_{u,v}}  \mathrm{w}^4 (u) \frac{1}{v} \|  ( v \slashed{\nabla} )^k \hat{\underline{\chi}} \|_{L^2 (S_{u,v} )}^2 | ( v \slashed{\nabla} ) \eta |^2 \\ \lesssim & \epsilon \sup_v \int_{\underline{H}_v}  v^3 \left| K - \frac{1}{v^2} \right|^2 + \left( \sup_u \int_v \int_{S_{u,v}}   \mathrm{w}^2 (u) |  ( v \slashed{\nabla} ) \eta |^2 \right) \times  \sum_{k=0}^2 \left( \int_u \mathrm{w}^2 (u) \sup_v \| ( v \slashed{\nabla} )^k \hat{\underline{\chi}} \|_{L^2 (S_{u,v} )}^2 \right) ,
\end{align*}
where we used Sobolev's inequality \eqref{sobolev3} on $\hat{\underline{\chi}}$ and the final term can be bounded by the bootstrap assumptions. Note that for higher derivatives we will need to use Sobolev's inequality on either $\eta$ or $\hat{\underline{\chi}}$ depending on which part is carrying the fewer number of derivatives. The $C_4$ term can be treated similarly and it has better decay in $v$ as it is quadratic in $\eta$.

For the $C_5$ term we have that
\begin{align*}
C_5 = & \int_{D_{u,v}} \mathrm{w}^2 (u) v^2 \left\langle K - \frac{1}{v^2} , -\frac{1}{2} \left( \slashed{\mathrm{tr}} \underline{\chi} + \frac{2}{v} \right) \slashed{\mathrm{div}} \eta \right\rangle \\ \lesssim & \int_{D_{u,v}} \frac{v^3}{v^2} \left| K - \frac{1}{v^2} \right|^2 + \int_{D_{u,v}} \mathrm{w}^4 (u) v \left| \slashed{\mathrm{tr}} \underline{\chi} + \frac{2}{v} \right|^2 | | ( v \slashed{\nabla} ) \eta |^2 \\ \lesssim & \epsilon \sup_v \int_{\underline{H}_v} v^3 \left| K - \frac{1}{v^2} \right|^2 + \sum_{k=0}^2 \int_{D_{u,v}} \mathrm{w}^4 (u) \frac{1}{v} \left\| ( v \slashed{\nabla} )^k \left( \slashed{\mathrm{tr}} \underline{\chi} + \frac{2}{v} \right) \right\|_{L^2 (S_{u,v})}^2 | ( v \slashed{\nabla} ) \eta |^2  \\ \lesssim & \epsilon \sup_v \int_{\underline{H}_v} v^3 \left| K - \frac{1}{v^2} \right|^2 + \sum_{k=0}^2 \left( \sup_v \int_u \mathrm{w}^2 (u) \left\| v (v \slashed{\nabla} )^k \slashed{\mathrm{tr}} \underline{\chi} + \frac{2}{v} \right\|_{L^2 (S_{u,v} )}^2 \right) \times \left( \sup_u \int_v \int_{S_{u,v}} \mathrm{w}^2 (u)  |  ( v \slashed{\nabla} ) \eta |^2 \right)  ,
\end{align*}
where we used Sobolev's inequality \eqref{sobolev3} on $\slashed{\mathrm{tr}} \underline{\chi} + \frac{2}{v}$. Note that for higher derivatives we need to use Sobolev's inequality \eqref{sobolev3} on either $\eta$ or $\slashed{\mathrm{tr}} \underline{\chi} + \frac{2}{v}$ depending on which part has fewer angular derivatives. The $C_7$ term can be treated similarly and it has better decay in $v$ as it is quadratic in $\eta$. 

For the $C_6$ term we have that
\begin{align*}
C_6 = & \int_{D_{u,v}} \mathrm{w}^2 (u) v^2 \left\langle K - \frac{1}{v^2} , \frac{1}{v} \slashed{\mathrm{div}} \eta \right\rangle \\ \lesssim & \int_{D_{u,v}} \mathrm{w}^2 (u) v^2 \left| K - \frac{1}{v^2} \right|^2 + \int_{D_{u,v}} \mathrm{w}^2 (u) | \slashed{\mathrm{div}} \eta |^2 \\ \lesssim & \epsilon \sup_u \int_{H_u} \mathrm{w}^2 (u) v^2 \left| K - \frac{1}{v^2} \right|^2 + \epsilon \sup_u \int_v \int_{S_{u,v}} \mathrm{w}^2 (u) | ( v \slashed{\nabla} ) \eta |^2 ,
\end{align*}
where for both terms the smallness in $\epsilon$ can be achieved by the choice of $U$. The term $C_8$ can be treated similarly and it has better decay in $v$ as it is quadratic in $\eta$. 

The higher derivatives for $K-\frac{1}{v^2}$ can be treated similarly to the terms above using Sobolev's inequality appropriately. Moreover, we can deal in a similar way with $G_{\check{\sigma}}$.

We note that the above estimates along with the boundedness of
$$ \int_{\underline{H}_v} \mathrm{w}^2 (u) v^2 | ( v \slashed{\nabla} )^k \underline{\beta} |^2 , $$
for $k \in \{0,1,2,3 \}$ that was shown before, imply also (through the use of equation \eqref{eq:rgauss3}) that 
\begin{equation}\label{bound:gauss}
v^2 \left\| ( v \slashed{\nabla} )^m \left( K - \frac{1}{v^2} \right) \right\|_{L^2 (S_{u_2 ,v} )} \lesssim v^2 \left\| ( v \slashed{\nabla} )^m \left( K - \frac{1}{v^2} \right) \right\|_{L^2 (S_{u_1 ,v} )}+ \epsilon^{1/2} G_I ,
\end{equation} 
for any $u_0 \leq u_1 < u_2 \leq U$, where $G_I$ consists of terms bounded by the bootstrap assumptions.

Finally (for this section) we derive estimates for the renormalized quantities:
$$ \mu = K - \frac{1}{v^2} + \slashed{\mathrm{div}} \eta , \quad \underline{\mu} = K - \frac{1}{v^2} - \slashed{\mathrm{div}} \underline{\eta} . $$
For both of them we will use the $\slashed{\nabla}_3$ equations, as we note that from both $\slashed{\nabla}_3 \mu$ and $\slashed{\nabla}_3 \underline{\mu}$ the term $\slashed{\mathrm{div}} \underline{\beta}$ (coming from $\slashed{\nabla}_3 \left( K - \frac{1}{v^2} \right)$) is absent. Schematically we have the following equations:
\begin{equation}\label{eq:mu3}
\slashed{\nabla}_3 \mu = G_{K-\frac{1}{v^2}} + G_{\mu} ,
\end{equation}
\begin{equation}\label{eq:umu3}
\slashed{\nabla}_3 \mu = G_{K-\frac{1}{v^2}} + G_{\underline{\mu}} ,
\end{equation} 
and for 
$$ \mathrm{\mu} \in \{ \mu , \underline{\mu} \} , $$
the term $G_{\mathrm{\mu}}$ schematically has the form 
$$ G_{\mathrm{\mu}} = - \slashed{\mathrm{tr}} \underline{\chi} \left( K - \frac{1}{v^2} \right) + \psi_{\underline{H}} \slashed{\nabla} \psi + \psi \underline{\beta} + \psi \psi \psi_{\underline{H}} ,$$
where
$$ \psi \in \{ \eta , \underline{\eta} \} , \quad \psi_{\underline{H}} \in \{  \hat{\underline{\chi}} , \slashed{\mathrm{tr}} \underline{\chi} \} . $$
So for $\mathrm{\mu}$ we have that
\begin{align*}
\| ( v \slashed{\nabla} )^k \mathrm{\mu} \|_{L^2 (S_{u,v})} \lesssim & \| ( v \slashed{\nabla} )^k \mathrm{\mu} \|_{L^2 (S_{u_0 ,v})} + \int_{u_0}^u \left( \| G_{K-\frac{1}{v^2}} \|_{L^2 (S_{u' , v} )} +  \| G_{\mu} \|_{L^2 (S_{u' , v} )} \right)  \\ \lesssim & \| ( v \slashed{\nabla} )^k \mathrm{\mu} \|_{L^2 (S_{u_0 ,v})} + \int_{u_0}^u ( m_1 + m_2 + m_3 + m_4 + m_5 + m_6 ) , 
\end{align*} 
for $k \in \{0,1,2,3\}$, where
$$ m_1 \simeq \left\| ( v \slashed{\nabla} )^k \left[ \slashed{\mathrm{tr}} \underline{\chi} \left( K - \frac{1}{v^2} \right) \right] \right\|_{L^2 (S_{u' , v})} , \quad m_2 \simeq \left\| ( v \slashed{\nabla} )^k \left( \frac{1}{v^2} \slashed{\mathrm{tr}} \underline{\chi} \right) \right\|_{L^2 (S_{u' , v} )} , \quad m_3 \simeq \| ( v \slashed{\nabla} )^k ( \psi_{\underline{H}} \slashed{\nabla} \psi ) \|_{L^2 (S_{u' , v})} , $$ $$  m_4 \simeq \| ( v \slashed{\nabla} )^k ( \psi \psi \psi_{\underline{H}} ) \|_{L^2 (S_{u' , v} )} , \quad m_5 \simeq \| ( v \slashed{\nabla} )^k ( \psi \underline{\beta} ) \|_{L^2 (S_{u' , v} )} , $$
and $m_6$ is the $L^2 (S)$ norm of higher order terms. We examine the above terms one by one and we have that
\begin{align*}
\int_{u_0}^u m_1 \simeq  & \sum_{k_1 + k_2 = k}  \int_{u_0}^u \left\| [ ( v \slashed{\nabla} )^{k_1} ( \slashed{\mathrm{tr}} \underline{\chi} ) ] \left[ ( v \slashed{\nabla} )^{k_2} \left( K - \frac{1}{v^2} \right) \right] \right\|_{L^2 (S_{u' , v})} \\ \lesssim & \sum_{k_1 + k_2 = k , k_1 < k_2 , 0 \leq m \leq 2}  \int_{u_0}^u \frac{1}{v^2} \left\|  ( v \slashed{\nabla} )^{k_1+m} ( \slashed{\mathrm{tr}} \underline{\chi} ) \right\|_{L^2 (S_{u' , v} )} \left\| v ( v \slashed{\nabla} )^{k_2} \left( K - \frac{1}{v^2} \right)  \right\|_{L^2 (S_{u' , v})} \\ & + \sum_{k_1 + k_2 = k ,  k_2 < k_1, 0\leq m \leq 2}  \int_{u_0}^u \frac{1}{v^2} \left\|  ( v \slashed{\nabla} )^{k_1+2} ( \slashed{\mathrm{tr}} \underline{\chi} ) \right\|_{L^2 (S_{u' , v} )} \left\| v ( v \slashed{\nabla} )^{k_2+m} \left( K - \frac{1}{v^2} \right)  \right\|_{L^2 (S_{u' , v})} \\ \lesssim & \frac{1}{v^2} \sum_{k_1 + k_2 = k ,  k_1 < k_2 , 0 \leq m \leq 2}  \left( \int_{u_0}^u  \mathrm{w}^2 (u' ) \left\|  ( v \slashed{\nabla} )^{k_1+m} ( \slashed{\mathrm{tr}} \underline{\chi} ) \right\|_{L^2 (S_{u' , v} )}^2 \right)^{1/2} \left( \int_{u_0}^u \frac{1}{\mathrm{w}^2 (u' )} \left\| v ( v \slashed{\nabla} )^{k_2} \left( K - \frac{1}{v^2} \right)  \right\|_{L^2 (S_{u' , v})}^2 \right)^{1/2} \\ & + \frac{1}{v^2} \sum_{k_1 + k_2 = k , k_2 < k_1, 0\leq m \leq 2}  \left( \int_{u_0}^u \mathrm{w}^2 (u' ) \left\|  ( v \slashed{\nabla} )^{k_1+2} ( \slashed{\mathrm{tr}} \underline{\chi} ) \right\|_{L^2 (S_{u' , v} )}^2 \right)^{1/2} \left( \int_{u_0}^u \frac{1}{\mathrm{w}^2 (u' )} \left\| v ( v \slashed{\nabla} )^{k_2+m} \left( K - \frac{1}{v^2} \right)  \right\|_{L^2 (S_{u' , v})}^2 \right)^{1/2} ,
\end{align*}
after applying Sobolev's inequality \eqref{sobolev3}. For the $m_2$ term we have that
$$ \int_{u_0}^u m_2 \simeq \frac{1}{v^2} \left( \int_{u_0}^u \mathrm{w}^2 (u' ) \| ( v \slashed{\nabla} )^k \slashed{\mathrm{tr}} \underline{\chi} \|_{L^2 (S_{u' , v} )}^2 \right)^{1/2} \times ( \left( \int_{u_0}^u \frac{1}{\mathrm{w}^2 (u' )} \right)^{1/2}  \lesssim \epsilon^{1/2} \frac{1}{v^2} \left( \int_{u_0}^u \mathrm{w}^2 (u' ) \| ( v \slashed{\nabla} )^k \slashed{\mathrm{tr}} \underline{\chi} \|_{L^2 (S_{u' , v} )}^2 \right)^{1/2} , $$
where for the smallness in $\epsilon$ we used the choice of $U$. For the $m_3$ term we have that
 \begin{align*}
 \int_{u_0}^u m_2 \simeq & \sum_{k_1 + k_2 = k} \int_{u_0}^u \| ( v \slashed{\nabla} )^k ( \psi_{\underline{H}} \slashed{\nabla} \psi ) \|_{L^2 (S_{u' , v} )} \\ \lesssim & \sum_{k_1 + k_2 = k}  \frac{1}{v} \int_{u_0}^u \left\| [ ( v \slashed{\nabla} )^{k_1} \psi_{\underline{H}} ] \left[ ( v \slashed{\nabla} )^{k_2 +1}\psi  \right] \right\|_{L^2 (S_{u' , v})} \\ \lesssim & \\ \lesssim & \sum_{k_1 + k_2 = k , k_1 \leq 1 , 0 \leq m \leq 2}  \int_{u_0}^u \frac{1}{v^3} \left\|  ( v \slashed{\nabla} )^{k_1+m} \psi_{\underline{H}} \right\|_{L^2 (S_{u' , v} )} \left\| v ( v \slashed{\nabla} )^{k_2+1} \psi  \right\|_{L^2 (S_{u' , v})} \\ & + \sum_{k_1 + k_2 = k , k_2 \leq 1, 0\leq m \leq 2}  \int_{u_0}^u \frac{1}{v^3} \left\|  ( v \slashed{\nabla} )^{k_1+2} \psi_{\underline{H}} \right\|_{L^2 (S_{u' , v} )} \left\| v ( v \slashed{\nabla} )^{k_2+m+1} \psi   \right\|_{L^2 (S_{u' , v})} \\ \lesssim & \frac{1}{v^3} \sum_{k_1 + k_2 = k , k_1 < k_2 , 0 \leq m \leq 2}  \left( \int_{u_0}^u  \mathrm{w}^2 (u' ) \left\|  ( v \slashed{\nabla} )^{k_1+m} \psi_{\underline{H}} \right\|_{L^2 (S_{u' , v} )}^2 \right)^{1/2} \left( \int_{u_0}^u \frac{1}{\mathrm{w}^2 (u' )} \left\| v ( v \slashed{\nabla} )^{k_2+1} \psi  \right\|_{L^2 (S_{u' , v})}^2 \right)^{1/2} \\ & + \frac{1}{v^3} \sum_{k_1 + k_2 = k , k_2 < k_1, 0\leq m \leq 2}  \left( \int_{u_0}^u \mathrm{w}^2 (u' ) \left\|  ( v \slashed{\nabla} )^{k_1+2} \psi_{\underline{H}} \right\|_{L^2 (S_{u' , v} )}^2 \right)^{1/2} \left( \int_{u_0}^u \frac{1}{\mathrm{w}^2 (u' )} \left\| v ( v \slashed{\nabla} )^{k_2+m+1} \psi \right\|_{L^2 (S_{u' , v})}^2 \right)^{1/2} ,
 \end{align*}
where once again we used Sobolev's inequality \eqref{sobolev3} (noting that the numerology works out in compatibility with our bootstrap assumptions as $k \in \{ 0,1,2,3 \}$). The $m_4$ term can be treated similarly and we can get that (for any $k \in \{0,1,2,3\}$):
\begin{align*}
\int_{u_0}^u m_4  \lesssim  \frac{1}{v^4} & \left( \sum_{m_1 \leq 3 , m_2 , m_3 \leq 4} \sup_{u' } \| v ( v \slashed{\nabla} )^{m_1} \psi \|_{L^2 (S_{u' , v} )} \right) \times \left(  \int_{u_0}^u \frac{1}{\mathrm{w}^2 (u' )} \| v ( v\slashed{\nabla} )^{m_2} \psi \|_{L^2 (S_{u' , v} )}^2 \right)^{1/2} \times \\ & \times \left( \int_{u_0}^u \mathrm{w}^2 (u' ) \| ( v \slashed{\nabla} )^{m_3} \psi_{\underline{H}} \|_{L^2 (S_{u' , v})}^2 \right)^{1/2} . 
\end{align*}
For the $m_5$ term we have that
\begin{align*}
\int_{u_0}^u m_5 \simeq & \int_{u_0}^u \| ( v \slashed{\nabla} )^k ( \psi \underline{\beta} ) \|_{L^2 (S_{u' , v })} \\ \lesssim & \sum_{k_1 + k_2 = k} \int_{u_0}^u \|[ ( v \slashed{\nabla} )^{k_1}  \psi ] [ (v \slashed{\nabla} )^{k_2}  \underline{\beta} ] \|_{L^2 (S_{u' , v })} \\ \lesssim & \sum_{k_1 + k_2 = k , k_1 \leq 1 , 0 \leq m \leq 2} \int_{u_0}^u \| ( v \slashed{\nabla} )^{k_1+m}  \psi \|_{L^2 (S_{u' , v})} \|  (v \slashed{\nabla} )^{k_2}  \underline{\beta}  \|_{L^2 (S_{u' , v })} \\ & + \sum_{k_1 + k_2 = k , k_2 \leq 1 , 0 \leq m \leq 2} \int_{u_0}^u \| ( v \slashed{\nabla} )^{k_1}  \psi \|_{L^2 (S_{u' , v})} \|  (v \slashed{\nabla} )^{k_2 + m}  \underline{\beta}  \|_{L^2 (S_{u' , v })} \\ \lesssim & \frac{1}{v^3} \sum_{k_1 + k_2 = k , k_1 \leq 1 , 0 \leq m \leq 2} \left( \int_{u_0}^u \frac{1}{\mathrm{w}^2 (u' )} \| v ( v \slashed{\nabla} )^{k_1 +m } \psi \|_{L^2 (S_{u' , v} )}^2 \right) \times \left( \int_{u_0}^u \mathrm{w}^2 (u' ) \| v ( v \slashed{\nabla}  )^{k_2} \underline{\beta} \|_{L^2 (S_{u' , v })}^2 \right) \\ & + \frac{1}{v^3} \sum_{k_1 + k_2 = k , k_2 \leq 1 , 0 \leq m \leq 2} \left( \int_{u_0}^u \frac{1}{\mathrm{w}^2 (u' )} \| v ( v \slashed{\nabla} )^{k_1  } \psi \|_{L^2 (S_{u' , v} )}^2 \right) \times \left( \int_{u_0}^u \mathrm{w}^2 (u' ) \| v ( v \slashed{\nabla}  )^{k_2 + m} \underline{\beta} \|_{L^2 (S_{u' , v })}^2 \right) ,
\end{align*}
where we applied Sobolev's inequality \eqref{sobolev3} noticing that the exponent for $\underline{\beta}$ is always less or equal than 3. The $m_6$ term can be bounded by the same estimates as the previous terms. Gathering together all the above estimates, we can now show the following two estimates:
\begin{equation}\label{mu1}
 \sup_u \int_v \mathrm{w}^2 (u) \| v ( v \slashed{\nabla} )^k \mathrm{\mu} \|_{L^2 (S_{u,v} )}^2 \lesssim\int_v \mathrm{w}^2 (u) \| v ( v \slashed{\nabla} )^k \mathrm{\mu} \|_{L^2 (S_{u_0 ,v} )}^2 + \epsilon M_{b1} , 
 \end{equation}
\begin{equation}\label{mu2}
\sup_v \int_u v^4 \| ( v \slashed{\nabla} )^k \mathrm{\mu} \|_{L^2 (S_{u,v} )}^2 \lesssim \epsilon C +  \epsilon M_{b2} , 
\end{equation}
where $M_{b1}$ consists of terms bounded by the bootstrap assumptions, and $C$ is constant depending on the initial data. To demonstrate estimate \eqref{mu1} we show how we can deal with the worst term (in terms of decay in $v$ and integration in $u$) coming from $m_1$, which we square and integrate in $v$ with a $v^2$ weight and we get that
\begin{align*}
\int_v v^2 \mathrm{w}^2 (u) & \left( \int_{u_0}^u m_1 \right)^2 \\ \lesssim & \sum_{m_1 \leq 3 , m_2 \leq 2} \int_v \frac{1}{v^2} \mathrm{w}^2 (u)  \left[ \left( \int_{u_0}^u \mathrm{w}^2 (u' ) \| ( v \slashed{\nabla} )^{m_1} ( \slashed{\mathrm{tr}} \underline{\chi} ) \|_{L^2 (S_{u' ,v} )}^2 \right) \cdot \left( \int_{u_0}^u  \frac{1}{v^2} \left\| v^{3/2} ( v \slashed{\nabla} )^{m_2} \left( K - \frac{1}{v^2} \right) \right\|_{L^2 (S_{u' ,v} )}^2 \right) \right] \\ \lesssim & \sum_{m_1 , m_2 \leq 3} \sup_v \left( \int_{u_0}^u \mathrm{w}^2 (u' ) \| ( v \slashed{\nabla} )^{m_1} ( \slashed{\mathrm{tr}} \underline{\chi} ) \|_{L^2 (S_{u' ,v} )}^2 \right) \int_{D_{u' ,v}} \frac{1}{\mathrm{w}^2 (u' )} \left| ( v \slashed{\nabla} )^{m_2} \left( K - \frac{1}{v^2} \right) \right|^2 \\ \lesssim & \sum_{m_1 , m_2 \leq 3} \sup_v \left( \int_{u_0}^u \mathrm{w}^2 (u' ) \| ( v \slashed{\nabla} )^{m_1} ( \slashed{\mathrm{tr}} \underline{\chi} ) \sup_v \|_{L^2 (S_{u' ,v} )}^2 \right) \left\| v^2 ( v \slashed{\nabla} )^{m_2} \left( K - \frac{1}{v^2} \right) \right\|_{L^2 (\underline{H}_v )}^2 ,
\end{align*} 
where we used the bound \eqref{bound:gauss} and we can get smallness in $\epsilon$ by the choice of $v_0$. Note that we can always replace $\slashed{\mathrm{tr}} \underline{\chi}$ with $\slashed{\mathrm{tr}} \underline{\chi} + \frac{2}{v}$ when this term is hit by an angular derivative, so the resulting term can be bounded by the bootstrap assumptions (although as it has been noted before bounds for $\slashed{\mathrm{tr}} \underline{\chi} + \frac{2}{v}$ imply bounds for $\slashed{\mathrm{tr}} \underline{\chi}$, the bounds for the latter are of fixed size and not dependent on the initial data though). Finally note that we used the fact that $\mathrm{w}$ is decreasing so that
$$ \sup_{u' \in [u_0 , u]} \frac{1}{\mathrm{w}^2 (u')} \leq \frac{1}{\mathrm{w}^2 (u)} . $$
Note that the rest of the terms are easier to deal with. Moreover it should be noted that from $m_1$ the worst term comes from $\slashed{\mathrm{tr}} \underline{\chi} ( v \slashed{\nabla} )^3 \left( K - \frac{1}{v^2} \right)$. Working similarly we also get the estimate \eqref{mu2}. Finally we note that for $l \in \{0,1,2\}$ we have the pointwise estimates:
$$ \| v^2 ( v \slashed{\nabla} )^l \mathrm{\mu} \|_{L^{\infty}_{u,v} L^2 (S_{u,v} )} \lesssim C + \epsilon M_l , $$
for $C$ a constant depending on the initial data and $M_l$ a term bounded by the bootstrap assumptions.

\textbf{Ricci coefficients, 1. $\hat{\chi}$, $\slashed{\mathrm{tr}} \chi - \frac{2}{v}$ and $\omega$:} For all these quantities we use the $\slashed{\nabla}_3$ equations \eqref{eq:chi3}, \eqref{eq:trchi3} and \eqref{eq:omega3}, for up to 3 angular derivatives. 

We start with $\hat{\chi}$ and we have that for the $L^1$ norm:
\begin{align*}
\| \hat{\chi} \|_{L^2 (S_{u_2 ,v} )} \lesssim & \| \hat{\chi} \|_{L^2 (S_{u_1 , v} )} + \int_{u_1}^{u_2} \| \slashed{\mathrm{tr}} \underline{\chi} \hat{\chi} \|_{L^2 (S_{u,v} )} \, du + \int_{u_1}^{u_2} \| \slashed{\nabla} \eta \|_{L^2 (S_{u,v} )} \, du  \\ & + \int_{u_1}^{u_2} \| \slashed{\mathrm{tr}} \chi \underline{\hat{\chi}} \|_{L^2 (S_{u,v} )} \, du + \int_{u_1}^{u_2} \| \eta \hat{\otimes} \eta \|_{L^2 (S_{u,v} )} \, du \\ = & \| \hat{\chi} \|_{L^2 (S_{u_1 , v} )} + R_0 + R_1 + R_2 + R_3 .
\end{align*}
We examine the terms one by one. For the $R_0$ term we have that
\begin{align*}
\int_{u_1}^{u_2} \| \slashed{\mathrm{tr}} \underline{\chi} \hat{\chi} \|_{L^2 (S_{u,v} )} \, du \lesssim & \sum_{k=0}^2 \int_{u_1}^{u_2} \frac{1}{v} \| ( v \slashed{\nabla} )^k \slashed{\mathrm{tr}} \underline{\chi} \|_{L^2 (S_{u,v} )}  \| \hat{\chi} \|_{L^2 (S_{u,v} )} \, du \\ \lesssim & \frac{1}{v} \sum_{k=0}^2 \left( \int_{u_1}^{u_2} \mathrm{w}^2 (u) \| ( v \slashed{\nabla} )^k \slashed{\mathrm{tr}} \underline{\chi} \|_{L^2 (S_{u,v} )}^2 \,  du \right)^{1/2} \left( \int_{u_1}^{u_2} \frac{1}{\mathrm{w}^2 (u)} \| \hat{\chi} \|_{L^2 (S_{u,v} )}^2 \,  du \right)^{1/2} \\ \lesssim & \epsilon^{1/2} \frac{1}{v} \sup_u \| \hat{\chi} \|_{L^2 (S_{u,v} )} \sum_{k=0}^2 \left( \int_{u_1}^{u_2} \mathrm{w}^2 (u) \| ( v \slashed{\nabla} )^k \slashed{\mathrm{tr}} \underline{\chi} \|_{L^2 (S_{u,v} )}^2 \,  du \right)^{1/2} ,
\end{align*}
which implies that $\int_v \frac{1}{v} R_0$ and $\int_v \frac{1}{v} ( R_0 )^2$ can be bounded by the bootstrap assumptions. For the $R_1$ term we have that
\begin{align*}
R_1 = & \int_{u_1}^{u_2} \|  \slashed{\nabla}  \eta \|_{L^2 (S_{u,v} )} \, du =  \int_{u_1}^{u_2} \frac{1}{v^2} \| v (v \slashed{\nabla} ) \eta \|_{L^2 (S_{u,v} )} \, du \\ \lesssim &  \frac{1}{v^2} \left( \int_{u_1}^{u_2} \frac{1}{\mathrm{w}^2 (u)} \right)^{1/2} \left( \int_{u_1}^{u_2} \mathrm{w}^2 (u) \| v (v \slashed{\nabla} ) \eta \|_{L^2 (S_{u,v} )}^2 \right)^{1/2} \lesssim \epsilon^{1/2}\frac{1}{v^2} \left( \int_{u_1}^{u_2} \mathrm{w}^2 (u) \| v (v \slashed{\nabla} ) \eta \|_{L^2 (S_{u,v} )}^2 \right)^{1/2}  .
\end{align*}
For the $L^1_v L^2 (S)$ norm of $\hat{\chi}$ we have that
\begin{align*}
\int_v \frac{1}{v} R_1 \lesssim &  \epsilon^{1/2} \int_v \frac{1}{v^3} \left( \int_{u_1}^{u_2} \| v (v \slashed{\nabla} ) \eta \|_{L^2 (S_{u,v} )}^2 \right)^{1/2} \\ \lesssim & \epsilon^{1/2} \left( \int_v \frac{1}{v^4} \right)^{1/2} \left( \int_{u_1}^{u_2} \int_v \| (v \slashed{\nabla} ) \eta \|_{L^2 (S_{u,v})}^2 \right)^{1/2} ,
\end{align*}
and the last term can be bounded by the bootstrap assumption. The case of the $L^2_v L^2 (S)$ norm of $\hat{\chi}$ can be treated in the same way. Note that the above estimate works for all derivatives up to 3 for $R_1$, as the derivatives of $\eta$ are estimated on an $H_u$ hypersurface. For the $R_2$ term involving $ \slashed{\mathrm{tr}}\chi \underline{\hat{\chi}}$ we break it into
$$ \slashed{\mathrm{tr}} \chi \underline{\hat{\chi}} = \left( \slashed{\mathrm{tr}} \chi - \frac{2}{v} \right) \underline{\hat{\chi}} + \frac{2}{v} \underline{\hat{\chi}} ,$$
and after breaking $R_2$ correspondingly as
$$R_2 \doteq R_2^1 + R_2^2 , $$ 
we have that
$$ R_2^1 \lesssim \frac{1}{v} \left( \int_{u_1}^{u_2} \frac{1}{\mathrm{w}^2 (u)} \right)^{1/2} \times \left( \int_{u_1}^{u_2} \mathrm{w}^2 (u)  \| \underline{\hat{\chi}} \|_{L^2 (S_{u,v} )}^2 \, du \right)^{1/2} \lesssim \epsilon^{1/2} \frac{1}{v} \left( \int_{u_1}^{u_2} \mathrm{w}^2 (u)  \| \underline{\hat{\chi}} \|_{L^2 (S_{u,v} )}^2 \, du \right)^{1/2} , $$
which is integrable in $v$ after we integrate $\| \hat{\chi} \|_{L^2 (S_{u_2 ,v} )}$ in $v$ and multiply it by $\frac{1}{v}$. Similarly we can square the $L^2 (S)$ norm of $\hat{\chi}$ first and then integrate in $v$ and the right hand side is finite by the bootstrap assumptions. Note that the smallness in $\epsilon$ comes from the choice of $U$. Moreover, the same argument works for all derivatives up to 3. On the other hand we have that
\begin{align*}
R_2^2 \lesssim & \int_{u_1}^{u_2} \left\| \left( \slashed{\mathrm{tr}} \chi - \frac{2}{v} \right) \hat{\underline{\chi}} \right\|_{L^2 (S_{u,v})} \, du \\ \lesssim & \frac{1}{v^2} \sum_{k=0}^2 \left( \int_{u_1}^{u_2} \frac{1}{\mathrm{w}^2 (u)} \left\| v (v \slashed{\nabla} )^k \left( \slashed{\mathrm{tr}} \chi - \frac{2}{v} \right)  \right\|_{L^2 (S_{u,v})}^2 \, du \right)^{1/2} \cdot \left( \int_{u_1}^{u_2} \mathrm{w}^2 (u) \| \hat{\underline{\chi}} \|_{L^2 (S_{u,v} )}^2 \, du \right)^{1/2} \\ \lesssim & \epsilon^{1/2} \frac{1}{v^2}  \sum_{k=0}^2 \left\| v (v \slashed{\nabla} )^k \left( \slashed{\mathrm{tr}} \chi - \frac{2}{v} \right)  \right\|_{L^{\infty}_u L^2 (S_{u,v})}  \left( \int_{u_1}^{u_2} \mathrm{w}^2 (u) \| \hat{\underline{\chi}} \|_{L^2 (S_{u,v} )}^2 \, du \right)^{1/2}  ,
\end{align*}
and then we have that
\begin{align*}
\int_v \frac{1}{v} R_2^2 \lesssim & \epsilon^{1/2} \left( \int_{u_1}^{u_2} \mathrm{w}^2 (u) \| \hat{\underline{\chi}} \|_{L^2 (S_{u,v} )}^2 \, du \right)^{1/2} \cdot \left( \sum_{k=0}^2 \int_v \frac{1}{v^3} \left\| v (v \slashed{\nabla} )^k \left( \slashed{\mathrm{tr}} \chi - \frac{2}{v} \right)  \right\|_{L^{\infty}_u L^2 (S_{u,v})} \, dv \right) \\ \lesssim &  \epsilon^{1/2} \left( \int_{u_1}^{u_2} \mathrm{w}^2 (u) \| \hat{\underline{\chi}} \|_{L^2 (S_{u,v} )}^2 \, du \right)^{1/2} \cdot \left( \int_v \frac{1}{v^4} \, dv \right)^{1/2} \cdot \left[ \sum_{k=0}^2 \left( \int_v  \left\| (v \slashed{\nabla} )^k \left( \slashed{\mathrm{tr}} \chi - \frac{2}{v} \right)  \right\|_{L^{\infty}_u L^2 (S_{u,v})} \, dv \right)^{1/2} \right] ,
\end{align*} 
and the last expression can be bounded by the bootstrap assumptions. We can deal similarly to deal with $\int_v \frac{1}{v} ( R_2^2 )^2$. For the $R_3$ term we have that
\begin{align*}
R_3 \lesssim & \int_{u_1}^{u_2} \| | \eta |^2 \|_{L^2 (S_{u,v} )} \, du \\ \lesssim & \sum_{k=0}^2 \left( \int_{u_1}^{u_2} \frac{1}{v} \| ( v \slashed{\nabla} )^k \eta \|_{L^2 (S_{u,v} )} \, du \right)^{1/2} \cdot  \left( \int_{u_1}^{u_2}  \|  \eta \|_{L^2 (S_{u,v} )} \, du \right)^{1/2} \\ \lesssim & \sum_{k=0}^2  \frac{1}{v^3} \| v ( v \slashed{\nabla} )^k \eta \|_{L^{\infty}_u L^2 (S_{u,v} )}  \cdot   \| v \eta \|_{L^{\infty}_u L^2 (S_{u,v} )} ,
\end{align*}
and the last term can be integrated in $v$ with $\frac{1}{v}$ as a weight either as it is or after being squared. Note that the same process works for all derivatives up to 3.

We turn to $\slashed{\mathrm{tr}} \chi - \frac{2}{v}$ and we have that
\begin{align*}
\left\| \slashed{\mathrm{tr}} \chi - \frac{2}{v} \right\|_{L^2 (S_{u_2 ,v} )} \lesssim & \left\| \slashed{\mathrm{tr}} \chi - \frac{2}{v} \right\|_{L^2 (S_{u_1 , v} )} + \int_{u_1}^{u_2} \left\| \slashed{\mathrm{tr}} \underline{\chi} \left( \slashed{\mathrm{tr}} \chi - \frac{2}{v} \right) \right\|_{L^2 (S_{u,v} )} \, du + \int_{u_1}^{u_2} \| \slashed{\nabla} \eta \|_{L^2 (S_{u,v} )} \, du  \\ + & \int_{u_1}^{u_2} \left\| K - \frac{1}{v^2} \right\|_{L^2 (S_{u,v} )} \, du + \int_{u_1}^{u_2} \left\| \frac{1}{v} \left( \slashed{\mathrm{tr}} \underline{\chi} + \frac{2}{v} \right) \right\|_{L^2 (S_{u,v} )} \, du + \int_{u_1}^{u_2} \| |\eta |^2 \|_{L^2 (S_{u,v} )} \, du  \\ = &  \left\| \slashed{\mathrm{tr}} \chi - \frac{2}{v} \right\|_{L^2 (S_{u_1 , v} )} + r_0 + r_1 + r_2 + r_3 + r_4 .
\end{align*}

For the $r_1$ term we work as for $R_1$ above in the case of $\hat{\chi}$. For the $r_2$ term we have that
$$ r_2 =  \int_{u_1}^{u_2} \left\| K - \frac{1}{v^2} \right\|_{L^2 (S_{u,v} )} \, du \lesssim \epsilon^{1/2} \frac{1}{v^{3/2}} \left( \int_{u_1}^{u_2} v^3 \left\| K - \frac{1}{v^2} \right\|_{L^2 (S_{u,v} )}^2 \, du \right)^{1/2} . $$
For the $r_3$ term we have that
\begin{align*}
 r_3 = & \int_{u_1}^{u_2} \left\| \frac{1}{v} \left( \slashed{\mathrm{tr}} \underline{\chi} + \frac{2}{v} \right) \right\|_{L^2 (S_{u,v} )} \, du \lesssim \frac{1}{v} \left( \int_{u_1}^{u_2} \frac{1}{\mathrm{w}^2 (u)} \, du \right)^{1/2} \cdot \left( \int_{u_1}^{u_2} \mathrm{w}^2 (u)  \left\| \slashed{\mathrm{tr}}\underline{\chi} + \frac{2}{v}  \right\|_{L^2 (S_{u,v} )}^2 \, du \right)^{1/2} \\ \lesssim & \epsilon^{1/2} \frac{1}{v} \left( \int_{u_1}^{u_2} \mathrm{w}^2 (u)  \left\| \slashed{\mathrm{tr}}\underline{\chi} + \frac{2}{v}  \right\|_{L^2 (S_{u,v} )}^2 \, du \right)^{1/2} .
\end{align*} 
For the $r_4$ term we work in the same way as for $R_3$ above in the case of $\hat{\chi}$. Gathering together all the above estimates we have that
$$ \left\| v \left( \slashed{\mathrm{tr}} \chi - \frac{2}{v} \right) \right\|_{L^2 (S_{u_2 ,v})} \lesssim \left\| v \left( \slashed{\mathrm{tr}} \chi - \frac{2}{v} \right) \right\|_{L^2 (S_{u_2 ,v})} + \epsilon^{1/2} r , $$
where $r$ can be bounded by the bootstrap assumptions. The same method (giving a similar estimate) works for all derivatives up to 3. 

For $\omega$ we have that
\begin{align*}
\| \omega \|_{L^2 (S_{u_2 , v} )} \lesssim & \| \omega \|_{L^2 (S_{u_1 , v} )} + \int_{u_1}^{u_2} \| \zeta ( \eta - \underline{\eta} ) \|_{L^2 (S_{u,v} )} \, du + \int_{u_1}^{u_2} \| \eta \cdot \underline{\eta} ) \|_{L^2 (S_{u,v} )} \, du \\ + & \int_{u_1}^{u_2} \left\| K - \frac{1}{v^2} \right\|_{L^2 (S_{u,v} )} \, du + \int_{u_1}^{u_2} \| \hat{\chi} \cdot \underline{\hat{\chi}} \|_{L^2 (S_{u,v} )} du + \int_{u_1}^{u_2} \left\| \frac{1}{v} \left( \slashed{\mathrm{tr}} \chi - \frac{2}{v} \right) \right\|_{L^2 (S_{u,v} )} \, du \\ + & \int_{u_1}^{u_2} \left\| \frac{1}{v} \left( \slashed{\mathrm{tr}} \underline{\chi} +  \frac{2}{v} \right) \right\|_{L^2 (S_{u,v} )} du + \int_{u_1}^{u_2} \left\| \left( \slashed{\mathrm{tr}} \chi - \frac{2}{v} \right) \left( \mathrm{tr}\underline{\chi} + \frac{2}{v} \right) \right\|_{L^2 (S_{u,v} )} \, du \\ = & \| \omega \|_{L^2 (S_{u_1 , v} )} + w_1 + w_2 + w_3 + w_4 + w_5 + w_6 + w_7 .
\end{align*}
For the $w_1$ term we have that
\begin{align*}
w_1 \lesssim & \int_{u_1}^{u_2} \left( \| | \underline{\eta} | | \eta |  \|_{L^2 (S_{u,v} )} + \| | \underline{\eta} |^2   \|_{L^2 (S_{u,v} )} \right) \, du \\ \lesssim & \sum_{k=0}^2 \left( \int_{u_1}^{u_2} \frac{1}{v} \| (v \slashed{\nabla} )^k \underline{\eta} \|_{L^2 (S_{u,v} )}^2 \, du \right)^{1/2} \cdot \left[ \left( \int_{u_1}^{u_2}  \| \underline{\eta} \|_{L^2 (S_{u,v} )}^2 \, du \right)^{1/2} + \left( \int_{u_1}^{u_2}  \| \eta \|_{L^2 (S_{u,v} )}^2 \, du \right)^{1/2} \right] \\ \lesssim & \epsilon \frac{1}{v^2} \left( \sum_{k=0}^2 \| v (v \slashed{\nabla} )^k \underline{\eta} \|_{L^2 (S_{u,v} )} \right) \left( \| v \underline{\eta} \|_{L^2 (S_{u,v} )} +\| v \eta \|_{L^2 (S_{u,v} )} \right) ,
\end{align*} 
and the last term can be bounded by the bootstrap assumptions after integrating it in $v$, and the same is true after squaring the last obtained expression and integrating it in $v$ with a $v$-weight. The $w_2$ term can be treated in the exact same way. Note that the same method holds for all derivatives up to 3 by using Sobolev's inequality and making sure that no term (either $\eta$ or $\underline{\eta}$) has more than 3 derivatives. The $w_3$ term can be treated similarly to the $r_2$ term above in the case of $\slashed{\mathrm{tr}} \chi - \frac{2}{v}$. For the $w_4$ term we have that
\begin{align*}
w_4 = & \int_{u_1}^{u_2} \| \hat{\chi} \cdot \hat{\underline{\chi}} \|_{L^2 (S_{u,v})} \, du \\ \lesssim & \sum_{k=0}^2 \left( \int_{u_1}^{u_2} \frac{1}{\mathrm{w}^2 (u) v^2} \| ( v \slashed{\nabla} )^k \hat{\chi} \|_{L^2 (S_{u,v} )}^2 \, du \right)^{1/2} \cdot \left( \int_{u_1}^{u_2} \mathrm{w}^2 (u) \| \hat{\underline{\chi}} \|_{L^2 (S_{u,v} )}^2 \, du \right)^{1/2} \\ \lesssim & \epsilon^{1/2} \frac{1}{v} \left( \sum_{k=0}^2 \| ( v \slashed{\nabla} )^k \hat{\chi} \|_{L^{\infty}_u L^2 (S_{u,v})} \right) \cdot \left( \int_{u_1}^{u_2} \mathrm{w}^2 (u) \| \hat{\underline{\chi}} \|_{L^2 (S_{u,v} )}^2 \, du \right)^{1/2} ,
\end{align*}
and the last term is integrable in $v$ by the bootstrap assumptions, and the same is true after squaring the last term and then integrating it in $v$ with a $v$-weight. For the $w_5$ term we have that
\begin{align*}
w_5 = & \int_{u_1}^{u_2} \left\| \frac{1}{v} \left( \slashed{\mathrm{tr}} \chi - \frac{2}{v} \right) \right\|_{L^2 (S_{u,v} )} \, du \\ \lesssim & \epsilon^{1/2} \left( \int_{u_1}^{u_2} \frac{1}{v^3} \left\| v \left( \slashed{\mathrm{tr}} \chi - \frac{2}{v} \right) \right\|_{L^2 (S_{u,v})}^2 \, du \right)^{1/2} \\ \lesssim & \epsilon^{1/2} \frac{1}{v^{3/2}} \left\| v \left( \slashed{\mathrm{tr}} \chi - \frac{2}{v} \right) \right\|_{L^{\infty}_u L^2 (S_{u,v})} , 
\end{align*}
and the last term is integrable in $v$ by the bootstrap assumptions, and the same is true after squaring the last term and then integrating it in $v$ with a $v$-weight. For the $w_6$ term we have that
\begin{align*}
w_6 = & \int_{u_1}^{u_2} \left\| \frac{1}{v} \left( \slashed{\mathrm{tr}} \underline{\chi} + \frac{2}{v} \right) \right\|_{L^2 (S_{u,v} )} \, du \\ \lesssim & \frac{1}{v^{3/2}} \left( \int_{u_1}^{u_2} \frac{1}{\mathrm{w}^2 (u)} \, du \right)^{1/2} \cdot \left( \int_{u_1}^{u_2} \left\| v \left( \slashed{\mathrm{tr}} \underline{\chi} + \frac{2}{v} \right) \right\|_{L^2 (S_{u,v})}^2 \, du \right)^{1/2} \\ \lesssim & \epsilon^{1/2} frac{1}{v^{3/2}} \left( \int_{u_1}^{u_2} \left\| v \left( \slashed{\mathrm{tr}} \underline{\chi} + \frac{2}{v} \right) \right\|_{L^2 (S_{u,v})}^2 \, du \right)^{1/2} , 
\end{align*}
and the same holds as in the previous cases. For the $w_7$ term we have that
\begin{align*}
w_7 = & \int_{u_1}^{u_2} \left\| \left( \slashed{\mathrm{tr}} \chi - \frac{2}{v} \right) \left( \mathrm{tr}\underline{\chi} + \frac{2}{v} \right) \right\|_{L^2 (S_{u,v} )} \, du \\ \lesssim & \sum_{k=0}^2 \left( \int_{u_1}^{u_2} \left\| ( v \slashed{\nabla} )^k  \left( \slashed{\mathrm{tr}} \underline{\chi} + \frac{2}{v} \right) \right\|_{L^2 (S_{u,v})}^2 \, du \right)^{1/2} \cdot \left( \int_{u_1}^{u_2}  \left\|  \slashed{\mathrm{tr}} \underline{\chi} - \frac{2}{v}  \right\|_{L^2 (S_{u,v})}^2 \, du \right)^{1/2} \\ \lesssim & \frac{1}{v^2}\sum_{k=0}^2 \left( \int_{u_1}^{u_2} \left\| v ( v \slashed{\nabla} )^k  \left( \slashed{\mathrm{tr}} \underline{\chi} + \frac{2}{v} \right) \right\|_{L^2 (S_{u,v})}^2 \, du \right)^{1/2} \cdot \left\|  v \left( \slashed{\mathrm{tr}} \underline{\chi} - \frac{2}{v} \right)  \right\|_{L^{\infty}_u L^2 (S_{u,v})}  ,
\end{align*}
and once again the last term is integrable in $v$ by the bootstrap assumptions, and the same is true after squaring the last term and then integrating it in $v$ with a $v$-weight.  Gathering together all the above estimates we have that
$$ \int_v \| \omega \|_{L^2 (S_{u_2 , v } )} \, dv \lesssim \int_v \| \omega \|_{L^2 (S_{u_1 , v } )} \, dv + \epsilon^{1/2} W_1 , $$
and 
$$ \int_v v \| \omega \|_{L^2 (S_{u_2 , v } )}^2 \, dv \lesssim \int_v v \| \omega \|_{L^2 (S_{u_1 , v } )}^2 \, dv + \epsilon W_2 , $$
where both $W_1$ and $W_2$ can be bounded by the bootstrap assumptions. Note that the same method used above works for all derivatives up to 3 by appropriately using Sobolev's inequality \eqref{sobolev3}.

For the top order angular derivative we use the $\slashed{\nabla}_4$ equation \eqref{eq:trchi4} for $\slashed{\mathrm{tr}} \chi - \frac{2}{v}$, and the elliptic equation \eqref{eq:codchi}. As opposed to previous work, in this situation we actually need to use them simultaneously. We have that
$$ v_2 \left\| ( v_2\slashed{\nabla} )^4 \left( \slashed{\mathrm{tr}}\chi - \frac{2}{v} \right) \right\|_{L^2 (S_{u , v_2} )} \lesssim  v_1 \left\|  ( v_1 \slashed{\nabla} )^4 \left(\slashed{\mathrm{tr}} \chi - \frac{2}{v_1} \right) \right\|_{L^2 (S_{u , v_1} )} + \int_{v_1}^{v_2} v \| F^4_{\slashed{\mathrm{tr}} \chi - \frac{2}{v}} \|_{L^2 (S_{u,v} )} \, dv $$
where
$$ \snabla_4 \left[ ( \slashed{\nabla}^4 ) \left( \slashed{\mathrm{tr}} \chi - \frac{2}{v} \right) \right] + 3 \slashed{\mathrm{tr}} \chi \cdot \left[ (  \slashed{\nabla}^4 ) \left( \slashed{\mathrm{tr}} \chi - \frac{2}{v} \right) \right] = \frac{1}{v^4} F^4_{\slashed{\mathrm{tr}} \chi - \frac{2}{v}} . $$
The trickiest terms are the one that is linear in $\omega$, that can be treated directly though as the norm
$$ \int_{v_1}^{v_2} \| ( v \slashed{\nabla} )^4 \omega \|_{L^{\infty}_u L^2 (S_{u,v} )} \, dv $$
is assumed to be finite for all $v_1$, $v_2$ by the bootstrap assumptions, and the one involving $| \hat{\chi} |^2$. For this last term we look at the case where no additional terms are introduced by the commutations, hence we have that
\begin{align*}
\sum_{k_1 + k_2 = 4} \int_{v_1}^{v_2} v \| ( ( v \slashed{\nabla} )^{k_1} \hat{\chi} ) \cdot &   ( ( v \slashed{\nabla} )^{k_2} \hat{\chi}  ) \|_{L^2 (S_{u,v} )} \, dv \lesssim   \sum_{k_1 , k_2 = 0 , k_1 + k_2 > 0}^4  \int_{v_1}^{v_2} \| ( v \slashed{\nabla} )^{k_1} \hat{\chi} \|_{L^2 (S_{u,v} )} \| ( v \slashed{\nabla} )^{k_2} \hat{\chi} \|_{L^2 (S_{u,v} )} \, dv \\ \lesssim & \sum_{0 \leq k_1 \leq 4 , 1 \leq k_2 \leq 4 } \int_{v_1}^{v_2} \| ( v \slashed{\nabla} )^{k_1} \hat{\chi} \|_{L^2 (S_{u,v} )} \left\| ( v \slashed{\nabla} )^{k_2} \left( \slashed{\mathrm{tr}}  \chi - \frac{2}{v} \right)  \right\|_{L^2 (S_{u,v} )} \, dv  \\ + &  \sum_{0 \leq k_1 \leq 4 , 1 \leq k_2 \leq 4 } \int_{v_1}^{v_2} v \| ( v \slashed{\nabla} )^{k_1} \hat{\chi} \|_{L^2 (S_{u,v} )} \| ( v \slashed{\nabla} )^{k_2} \beta \|_{L^2 (S_{u,v} )} \, dv \\ + & \sum_{0 \leq k_1 \leq 4 , 1 \leq k_2 \leq 4 } \int_{v_1}^{v_2} v \| ( v \slashed{\nabla} )^{k_1} \hat{\chi} \|_{L^2 (S_{u,v} )} \| ( v \slashed{\nabla} )^{k_2} F^4_{ell} \|_{L^2 (S_{u,v} )} \, dv ,
\end{align*}
where 
$$ v^{k+1} \slashed{\nabla}^k \slashed{\mathrm{div}} \hat{\chi} =  (v \slashed{\nabla} )^{k+1} \left( \mathrm{tr} \chi - \frac{2}{v} \right) - v^{k+1} \slashed{\nabla}^k \beta + v^{k+1} \slashed{\nabla}^k F^4_{ell} . $$ 
Note that we used Sobolev's inequality \eqref{sobolev3}, and then the elliptic estimate \eqref{eq:codchi}. The terms included in $F^4_{ell}$ are better, so we consider the first two terms of the last expression, and we have that:
\begin{align*}
\sum_{0 \leq k_1 \leq 4 , 1 \leq k_2 \leq 4 } & \int_{v_1}^{v_2} \| ( v \slashed{\nabla} )^{k_1} \hat{\chi} \|_{L^2 (S_{u,v} )} \left\| ( v \slashed{\nabla} )^{k_2}  \left( \slashed{\mathrm{tr}}  \chi - \frac{2}{v} \right)  \right\|_{L^2 (S_{u,v} )} \, dv  \\ \lesssim &  \sum_{0 \leq k_1 \leq 4 , 1 \leq k_2 \leq 4 } \sup_v \left( v   \left\| ( v \slashed{\nabla} )^{k_2}  \left( \slashed{\mathrm{tr}}  \chi - \frac{2}{v} \right)  \right\|_{L^2 (S_{u,v} )} \right) \cdot \int_{v_1}^{v_2} \frac{1}{v}   \| ( v \slashed{\nabla} )^{k_1} \hat{\chi} \|_{L^2 (S_{u,v} )} \, dv ,
\end{align*}
and by repeating the same process for all angular derivatives of $\slashed{\mathrm{tr}} \chi - \frac{2}{v}$ and taking the $L^1_v L^{\infty}_u L^2 (S_{u,v} )$ of $\hat{\chi}$ to be appropriately small we can absorb the above term in the left hand side. On the other hand we have that:
\begin{align*}
 \sum_{0 \leq k_1 \leq 4 , 0 \leq k_2 \leq 3 } & \int_{v_1}^{v_2} v \| ( v \slashed{\nabla} )^{k_1} \hat{\chi} \|_{L^2 (S_{u,v} )} \| ( v \slashed{\nabla} )^{k_2} \beta \|_{L^2 (S_{u,v} )} \, dv\\ \lesssim & \sum_{0 \leq k_1 \leq 4 , 0 \leq k_2 \leq 3 } \left( \int_{v_1}^{v_2} \frac{1}{v} \| ( v \slashed{\nabla} )^{k_1} \hat{\chi} \|_{L^2 (S_{u,v} )}^2 \, dv \right)^{1/2} \cdot  \left( \int_{v_1}^{v_2} v^3 \| ( v \slashed{\nabla} )^{k_2} \beta \|_{L^2 (S_{u,v} )} \, dv \right)^{1/2} ,
\end{align*}
and both of the last terms are bounded by the bootstrap assumptions.

Finally in this section, we deal with the fourth top-order derivative for $\omega$. We introduce the auxiliary quantity $\omega^{\dagger}$ through the equation
\begin{equation}\label{eq:omegadag}
\slashed{\nabla}_3 \omega^{\dagger} = \frac{1}{2} \check{\sigma} .
\end{equation}
Let
\begin{equation}\label{eq:kappa}
\kappa \doteq \snabla \omega +^{*} \snabla \omega^{\dagger} - \frac{1}{2} \beta .
\end{equation}
Note that we have that
\begin{equation}\label{eq:kappa3}
\begin{split}
\snabla_3 \kappa \sim &  - \snabla G^1_{\kappa} - \frac{1}{2} \snabla G^2_{\kappa} + \sum_{\psi \in \{ \eta , \underline{\eta} \}} \psi  ( G^1_{\kappa}  + G^2_{\kappa} ) \\ & + \sum_{\psi_{\underline{H}} \in \{ \underline{\hat{\chi}} , \slashed{\mathrm{tr}} \underline{\chi} \} } [  \psi_{\underline{H}} ( \snabla \kappa) + ( \snabla \psi_{\underline{H}} ) \kappa + \psi \psi_{\underline{H}} \kappa ] ,
\end{split}  
\end{equation}
where
\begin{align*}
G^1_{\kappa} \doteq &  \zeta \cdot ( \eta - \underline{\eta} ) - \eta \cdot \underline{\eta} - \frac{1}{2} \left( K - \frac{1}{v^2} \right) \\ & + \frac{1}{4} \hat{\chi} \cdot \underline{\hat{\chi}}  -\frac{1}{8} \left( \slashed{\mathrm{tr}} \chi - \frac{2}{v} \right) \left( \slashed{\mathrm{tr}} \underline{\chi} + \frac{2}{v} \right) + \frac{1}{2v} \left( \slashed{\mathrm{tr}} \chi - \frac{2}{v} \right) - \frac{1}{2v} \left( \slashed{\mathrm{tr}} \underline{\chi} + \frac{2}{v} \right) ,
\end{align*}
and 
$$ G^2_{\kappa} \doteq \snabla \left( K - \frac{1}{v^2} \right) - F_{\beta} , $$
which implies that
\begin{align*}
\snabla_3 [ ( v \snabla )^3 \kappa ] \sim &  - \frac{1}{v^4} ( v \snabla )^3 G^1_{\kappa}  - \frac{1}{2v^4} ( v \snabla )^3 G^2_{\kappa} + \frac{1}{v} \sum_{\psi \in \{ \eta , \underline{\eta}\} } \sum_{i_1 + i_2 + i_3 = 3, i_3 \geq 1} [ ( v \snabla )^{i_1} \psi ]^{i_2} ( v \snabla )^{i_3} ( G^1_{\kappa} + G^2_{\kappa} ) \\ & + \frac{1}{v} \sum_{\psi \in \{ \eta , \underline{\eta}\} , \psi_{\underline{H}} \in \{ \hat{\underline{\chi}} , \slashed{\mathrm{tr}} \underline{\chi} \} } \sum_{i_1 + i_2 + i_3 + i_4 = 3} [ ( v \snabla )^{i_1} \psi ]^{i_2} [ ( v \snabla )^{i_3} \psi_{\underline{H}} ] [  ( v \snabla )^{i_4} \kappa ] .
\end{align*}
From the definition of $\omega^{\dagger}$ we have that:
\begin{align*}
\| \omega^{\dagger} \|_{L^2 (S_{u_2 , v}  )} \lesssim & \| \omega^{\dagger} \|_{L^2 (S_{u_2 , v}  )} + \int_{u_1}^{u_2} \| \check{\sigma} \|_{L^2 (S_{u,v} ) } \, du \\ \lesssim & \| \omega^{\dagger} \|_{L^2 (S_{u_2 , v}  )} +  \epsilon^{1/2} \frac{1}{v^{3/2}} \left( \int_{u_1}^{u_2} v^3 \| \check{\sigma} \|_{L^2 (S_{u,v} ) }^2 \, du \right)^{1/2} , 
\end{align*}
which implies that 
$$ \sum_{i \leq 3} \| v^{1/2} (v \snabla )^i \omega^{\dagger} \|_{L^2_v L^{\infty}_u L^2 (S_{u,v} ) }^2 \lesssim  \sum_{i \leq 3} \left( \| v^{1/2} (v \snabla )^i \omega^{\dagger} \|_{L^2_v  L^2 (S_{u_1 ,v} ) }^2  + \epsilon \int_{u_1}^{u_2} v^3 \| \check{\sigma} \|_{L^2 (S_{u,v} ) }^2 \, du \right) , $$
and
$$ \sum_{i \leq 3} \| (v \snabla )^i \omega^{\dagger} \|_{L^1_v L^{\infty}_u L^2 (S_{u,v} ) } \lesssim  \sum_{i \leq 3} \left[ \| (v \snabla )^i \omega^{\dagger} \|_{L^1_v  L^2 (S_{u_1 ,v} ) }  + \epsilon^{1/2} \left( \int_{u_1}^{u_2} v^3 \| \check{\sigma} \|_{L^2 (S_{u,v} ) }^2 \, du \right)^{1/2} \right] . $$
Using the last two estimates, the equation for $\snabla_3 [ (v \snabla )^3 \kappa ]$, and the estimates for $\eta$, $\underline{\eta}$, $\hat{\underline{\chi}}$, $\slashed{\mathrm{tr}} \underline{\chi}$, $K - \frac{1}{v^2}$ and $\beta$, we get that
$$ \| v^{1/2} ( v \snabla )^4 \kappa \|_{L^2_v L^{\infty}_u L^2 (S_{u,v} )}^2 \lesssim \| v^{1/2} ( v \snabla )^4 \kappa \|_{L^2_v L^2 (S_{u_1 ,v} )}^2 + \epsilon \bar{\Omega} , $$
and
$$ \| ( v \snabla )^4 \kappa \|_{L^1_v L^{\infty}_u L^2 (S_{u,v} )} \lesssim \| v^{1/2} ( v \snabla )^4 \kappa \|_{L^1_v  L^2 (S_{u_1 ,v} )} + \epsilon^{1/2} \bar{\Omega} , $$
where $\bar{\Omega}$ can be bounded by the bootstrap assumptions. Now we note that we have the following div-curl system:
$$ \slashed{\mathrm{div}} ( \snabla \omega^{\dagger} ) =  \slashed{\mathrm{div}} \kappa + \frac{1}{2} \slashed{\mathrm{div}} \beta , $$
$$ \slashed{\mathrm{div}} ( \snabla \omega^{\dagger} ) =  \slashed{\mathrm{curl}} \kappa + \frac{1}{2} \slashed{\mathrm{curl}} \beta , $$
$$ \slashed{\mathrm{curl}} ( \snabla \omega ) = \slashed{\mathrm{curl}} ( \snabla \omega ) = 0 , $$
and by Proposition \ref{ellipticcurl} and estimate \eqref{bound:gauss}, we have that
\begin{align*}
\| v^{1/2} ( v \snabla )^4 \omega \|_{L^2_v L^{\infty}_u L^2 (S_{u,v} )} + &\| v^{1/2} ( v \snabla )^4 \omega^{\dagger} \|_{L^2_v L^{\infty}_u L^2 (S_{u,v} )} \\ &+ \| ( v \snabla )^4 \omega \|_{L^1_v L^{\infty}_u L^2 (S_{u,v} )} + \| ( v \snabla )^4 \omega^{\dagger} \|_{L^1_v L^{\infty}_u L^2 (S_{u,v} )} \lesssim \bar{\Omega}_i + \epsilon^{1/2} \widetilde{\Omega} , 
\end{align*}
where $\bar{\Omega}_i$ is bounded by the initial data while $\widetilde{\Omega}$ is bounded by the bootstrap assumptions.

\textbf{Ricci coefficients, 2. $\hat{\underline{\chi}}$ and $\slashed{\mathrm{tr}} \underline{\chi}$:} We use the $\slashed{\nabla}_4$ equation \eqref{eq:uchi4} for $\hat{\underline{\chi}}$ and the $\slashed{\nabla}_3$ equations \eqref{eq:truchir3}, \eqref{eq:truchi3} for $\mathrm{tr} \underline{\chi}$ and $\mathrm{tr} \underline{\chi} + \frac{2}{v}$ for up to 3 derivatives. For $\hat{\underline{\chi}}$ we have that
\begin{align*}
\| \hat{\underline{\chi}} \|_{L^2 (S_{u,v_2})} \lesssim & \| \hat{\underline{\chi}} \|_{L^2 (S_{u,v_2})} + \int_{v_1}^{v_2} \| \slashed{\nabla} \underline{\eta} \|_{L^2 (S_{u,v} )} \, dv \\ & + \int_{v_1}^{v_2} \| \omega \hat{\underline{\chi}} \|_{L^2 (S_{u,v} )} \, dv+ \int_{v_1}^{v_2} \frac{1}{v} \| \hat{\chi} \|_{L^2 (S_{u,v} )} \, dv \\ & + \int_{v_1}^{v_2} \left\| \left( \slashed{\mathrm{tr}} \underline{\chi} + \frac{2}{v} \right) \hat{\chi} \right\|_{L^2 (S_{u,v} )} \, dv +  \int_{v_1}^{v_2} \| \underline{\eta} \otimes \underline{\eta} \|_{L^2 (S_{u,v} )} \, dv \\ = & \| \hat{\underline{\chi}} \|_{L^2 (S_{u,v_2})} + d_1 + d_2 + d_3 + d_4 + d_5 .
\end{align*}
For $d_1$ we have that
$$ d_1 \doteq \int_{v_1}^{v_2} \frac{1}{v} \| ( v \slashed{\nabla} ) \underline{\eta} \|_{L^2 (S_{u,v} )} \, dv \lesssim \epsilon^{1/2} \left( \int_{v_1}^{v_2} \| v ( v \slashed{\nabla} ) \underline{\eta} \|_{L^2 (S_{u,v} )}^2 \, dv \right)^{1/2} . $$
For $d_2$ we have that
\begin{align*}
d_2 \doteq & \int_{v_1}^{v_2} \| \omega \hat{\underline{\chi}} \|_{L^2 (S_{u,v} )} \, dv \lesssim \sum_{k=0}^2  \int_{v_1}^{v_2} \frac{1}{v} \| ( v \slashed{\nabla} )^k \omega \|_{L^2 (S_{u,v} )} \| \hat{\underline{\chi}} \|_{L^2 (S_{u,v} )} \, dv \\ \lesssim & \frac{1}{\mathrm{w} (u)} \sum_{k=0}^2 \left( \int_{v_1}^{v_2} \frac{1}{v^3} \mathrm{w}^2 (u) \| \hat{\underline{\chi}} \|_{L^2 (S_{u,v} )}^2 \, dv \right)^{1/2} \left( \int_{v_1}^{v_2} v \| ( v \slashed{\nabla} )^k \omega \|_{L^2 (S_{u,v} )}^2 \, dv \right)^{1/2} . 
\end{align*}
For $d_4$ we have that
\begin{align*}
d_4 \doteq & \int_{v_1}^{v_2} \left\| \left( \slashed{\mathrm{tr}} \underline{\chi} + \frac{2}{v} \right) \hat{\chi} \right\|_{L^2 (S_{u,v} )} \, dv \lesssim \sum_{k=0}^2 \int_{v_1}^{v_2} \frac{1}{v^2} \left\| v ( v \slashed{\nabla} )^k \left( \slashed{\mathrm{tr}} \underline{\chi} + \frac{2}{v} \right) \right\|_{L^2 (S_{u,v} )}  \| \hat{\chi} \|_{L^2 (S_{u,v} )} \, dv \\ \lesssim & \frac{1}{\mathrm{w} (u)} \sum_{k=0}^2 \left( \int_{v_1}^{v_2} \frac{1}{v^3} \mathrm{w}^2 (u) \left\| v ( v \slashed{\nabla} )^k \left( \slashed{\mathrm{tr}} \underline{\chi} + \frac{2}{v} \right) \right\|_{L^2 (S_{u,v} )}^2 \, dv \right)^{1/2} \left( \int_{v_1}^{v_2} \frac{1}{v} \| \hat{\chi} \|_{L^2 (S_{u,v} )}^2 \, dv \right)^{1/2} .
\end{align*}
For $d_5$ we have that
\begin{align*}
d_5 \doteq & \int_{v_1}^{v_2} \| \underline{\eta} \otimes \underline{\eta} \|_{L^2 (S_{u,v} )} \, dv \lesssim \sum_{k=0}^2 \int_{v_1}^{v_2} \frac{1}{v^3} \| v ( v \slashed{\nabla} )^k \underline{\eta} \|_{L^2 (S_{u,v} )} \| v \underline{\eta} \|_{L^2 (S_{u,v} )} \, dv \\ \lesssim & \sum_{k=0}^2 \left( \int_{v_1}^{v_2} \frac{1}{v^2} \| v ( v \slashed{\nabla} )^k \underline{\eta} \|_{L^2 (S_{u,v} )}^2 \, dv \right)^{1/2} \left( \int_{v_1}^{v_2} \frac{1}{v^2} \| v \underline{\eta} \|_{L^2 (S_{u,v} )}^2 \, dv \right)^{1/2} .
\end{align*}
Gathering all the above estimates we have that
\begin{align*}
\int_{u_1}^{u_2} & \mathrm{w}^2 (u) \|  \hat{\underline{\chi}} \|_{L^2 (S_{u,v_2} )}^2 \, du \lesssim  \int_{u_1}^{u_2} \mathrm{w}^2 (u) \| \hat{\underline{\chi}} \|_{L^2 (S_{u,v_1} )}^2 \, du + \epsilon \int_{u_1}^{u_2} \int_{v_1}^{v_2} \mathrm{w}^2 (u) \| v ( v \slashed{\nabla} ) \underline{\eta} \|_{L^2 (S_{u,v} )}^2 \, dudv \\ & + \sum_{k=0}^2 \left( \int_{v_1}^{v_2} v \| ( v \slashed{\nabla} )^k \omega \|_{L^2 (S_{u,v} )}^2 \, dv \right) \times \int_{u_1}^{u_2} \int_{v_1}^{v_2} \frac{1}{v^3} \mathrm{w}^2 (u) \| \hat{\underline{\chi}} \|_{L^2 (S_{u,v} )}^2 \, dudv \\ & + \epsilon \left( \int_{v_1}^{v_2} \frac{1}{v} \| \hat{\chi} \|_{L^2 (S_{u,v} )} \, dv \right)^2  + \sum_{k=0}^2 \left( \int_{v_1}^{v_2} \frac{1}{v} \| \hat{\chi} \|_{L^2 (S_{u,v} )}^2 \, dv \right) \times \int_{u_1}^{u_2} \int_{v_1}^{v_2} \frac{1}{v^3} \mathrm{w}^2 (u) \left\| v ( v \slashed{\nabla} )^k \left( \slashed{\mathrm{tr}} \underline{\chi} + \frac{2}{v} \right) \right\|_{L^2 (S_{u,v} )}^2 \, du dv \\ & + \epsilon \| v \underline{\eta} \|_{L^{\infty}_{u,v} L^2 (S_{u,v})}^2 \left( \sup_u \int_{v_1}^{v_2} \mathrm{w}^2 (u) \| ( v \slashed{\nabla} )^k \underline{\eta} \|_{L^2 (S_{u,v} )}^2 \, dv \right) . 
\end{align*}
Note that all the terms in the right hand side can be bounded by the bootstrap assumptions. A similar estimate holds for all derivatives up to 3. Note also that in the very last term we need the integral in $v$ and the weight with $\mathrm{w}$ when dealing with the fourth top order derivative of $\underline{\eta}$.

Now we turn to $\slashed{\mathrm{tr}} \underline{\chi}$. We use equation \eqref{eq:truchi3} and we have that
\begin{align*}
\| \slashed{\mathrm{tr}} \underline{\chi} \|_{L^2 (S_{u,v} )} \lesssim & \| \slashed{\mathrm{tr}} \underline{\chi} \|_{L^2 (S_{u_0 ,v} )} + \int_{u_0}^u \| ( \slashed{\mathrm{tr}} \underline{\chi} )^2 \|_{L^2 (S_{u' , v})} \, du' \\ & + \int_{u_0}^u \| | \hat{\underline{\chi}} |^2 \|_{L^2 (S_{u' , v})} \, du' \\ \lesssim & \| \slashed{\mathrm{tr}} \underline{\chi} \|_{L^2 (S_{u_0 ,v} )} + \sum_{k=0}^2 \int_{u_0}^u \frac{1}{v} \| (v \slashed{\nabla} )^k \slashed{\mathrm{tr}} \underline{\chi} \|_{L^2 (S_{u' , v})} \| \slashed{\mathrm{tr}} \underline{\chi} \|_{L^2 (S_{u' , v} )} \, du' \\ & +\sum_{k=0}^2 \int_{u_0}^u \frac{1}{v} \| (v \slashed{\nabla} )^k \hat{\underline{\chi}} \|_{L^2 (S_{u' , v})} \| \slashed{\mathrm{tr}} \underline{\chi} \|_{L^2 (S_{u' , v} )} \, du' \\ \lesssim & \| \slashed{\mathrm{tr}} \underline{\chi} \|_{L^2 (S_{u_0 ,v} )} + \frac{1}{v} \sum_{k=0}^2 \left( \int_{u_0}^u  \mathrm{w}^2 (u' ) \| (v \slashed{\nabla} )^k \slashed{\mathrm{tr}} \underline{\chi} \|_{L^2 (S_{u' , v})}^2 \, du' \right)^{1/2} \left( \int_{u_0}^u \frac{\mathrm{w}^2 (u')}{\mathrm{w}^4 (u' )} \| \slashed{\mathrm{tr}} \underline{\chi} \|_{L^2 (S_{u' , v} )}^2 \, du' \right)^{1/2}  \\ & +\frac{1}{v} \sum_{k=0}^2 \left( \int_{u_0}^u  \mathrm{w}^2 (u' ) \| (v \slashed{\nabla} )^k \hat{\underline{\chi}} \|_{L^2 (S_{u' , v})}^2 \, du' \right)^{1/2} \left( \int_{u_0}^u \frac{\mathrm{w}^2 (u')}{\mathrm{w}^4 (u' )} \| \hat{\underline{\chi}} \|_{L^2 (S_{u' , v} )}^2 \, du' \right)^{1/2} \\ \lesssim & \| \slashed{\mathrm{tr}} \underline{\chi} \|_{L^2 (S_{u_0 ,v} )} + \frac{1}{v} \sum_{k=0}^2 \left( \int_{u_0}^u  \mathrm{w}^2 (u' ) \| (v \slashed{\nabla} )^k \slashed{\mathrm{tr}} \underline{\chi} \|_{L^2 (S_{u' , v})}^2 \, du' \right)^{1/2} \left( \int_{u_0}^u \frac{\mathrm{w}^2 (u')}{\mathrm{w}^4 (u' )} \| \slashed{\mathrm{tr}} \underline{\chi} \|_{L^2 (S_{u' , v} )}^2 \, du' \right)^{1/2}  \\ & +\frac{1}{v} \frac{1}{\mathrm{w}^2 (u)} \sum_{k=0}^2 \left( \int_{u_0}^u  \mathrm{w}^2 (u' ) \| (v \slashed{\nabla} )^k \hat{\underline{\chi}} \|_{L^2 (S_{u' , v})}^2 \, du' \right)^{1/2} \left( \int_{u_0}^u \mathrm{w}^2 (u') \| \hat{\underline{\chi}} \|_{L^2 (S_{u' , v} )}^2 \, du' \right)^{1/2} ,
\end{align*}
where we used that
$$ \sup_{u' \in [u_0 , u]} \frac{1}{\mathrm{w}^2 (u' )} \leq \frac{1}{\mathrm{w}^2 (u)} , $$
and now from the previous estimate we see that we have 
$$ \int_{u_0}^{u''} \mathrm{w}^2 (u) \| \slashed{\mathrm{tr}} \underline{\chi} \|_{L^2 (S_{u,v} )}^2 \, du \lesssim  \| \slashed{\mathrm{tr}} \underline{\chi} \|_{L^{\infty}_v L^2 (S_{u_0 ,v} )}^2 + \epsilon D_{tr} , $$
where $D_{tr}$ can be bounded by the bootstrap assumptions. For $\slashed{\mathrm{tr}} \underline{\chi} + \frac{2}{v}$ we use equation \eqref{eq:truchir3} and we have for $m \in \{0,1,2,3\}$ that 
\begin{align*} 
v \left\| ( v \slashed{\nabla} )^m \left( \slashed{\mathrm{tr}} \underline{\chi} + \frac{2}{v} \right) \right\|_{L^2 (S_{u,v} )} & \lesssim  v \left\| ( v \slashed{\nabla} )^m \left( \slashed{\mathrm{tr}} \underline{\chi} + \frac{2}{v} \right) \right\|_{L^2 (S_{u_0 ,v} )} + \int_{u_0}^u v \left\| ( \slashed{\mathrm{tr}} \underline{\chi} ) \cdot ( v \slashed{\nabla} )^m \left( \slashed{\mathrm{tr}} \underline{\chi}  + \frac{2}{v} \right) \right\|_{L^2 (S_{u' , v})} \, du' \\ & + \sum_{m_1 + m_2 = m} \int_{u_0}^u v \left\| (v \slashed{\nabla} )^{m_1} \left( \slashed{\mathrm{tr}} \underline{\chi} + \frac{2}{v} \right)  \cdot ( v \slashed{\nabla} )^{m_2} \left( \slashed{\mathrm{tr}} \underline{\chi}  + \frac{2}{v} \right) \right\|_{L^2 (S_{u' , v})} \, du'  \\ & + \int_{u_0}^u \left\| (v \slashed{\nabla} )^m \left( \slashed{\mathrm{tr}} \underline{\chi} + \frac{2}{v} \right) \right\|_{L^2 (S_{u' , v})} \, du' + \sum_{m_1 + m_2 = m} \int_{u_0}^u v \| [ (v \slashed{\nabla} )^{m_1} \hat{\underline{\chi}} ] [ (v \slashed{\nabla} )^{m_2} \hat{\underline{\chi}} ] \|_{L^2 (S_{u' , v})} \, du' \\ & + \int_{u_0}^u v \| F_{\slashed{\mathrm{tr}} \underline{\chi} + \frac{2}{v} , m} \|_{L^2 (S_{u' , v} )} \, du' \\ \doteq  & v \left\| ( v \slashed{\nabla} )^m \left( \slashed{\mathrm{tr}} \underline{\chi} + \frac{2}{v} \right) \right\|_{L^2 (S_{u_0 ,v} )} + z_1 + z_2 + z_3 + z_4 + z_5 , 
\end{align*}
where $F_{\slashed{\mathrm{tr}} \underline{\chi} + \frac{2}{v} , m}$ contains quadratic and cubic terms of the form 
$$ \sum_{m_1 + m_2 = m-1} [ ( v \slashed{\nabla})^{m_1} \psi ] [ ( v \slashed{\nabla} )^{m_2} \dot{F}_{\slashed{\mathrm{tr}} \underline{\chi} + \frac{2}{v} } ] , \quad    \sum_{m_1 + m_2 = m} [ ( v \slashed{\nabla})^{m_1} ( \hat{\underline{\chi}} ,\slashed{\mathrm{tr}} \underline{\chi} )] [ ( v \slashed{\nabla} )^{m_2} \dot{F}_{\slashed{\mathrm{tr}} \underline{\chi} + \frac{2}{v} } ] ,$$ $$  \sum_{m_1 + m_2 + m_3 = m , \psi \in \{ \eta , \underline{\eta} \} , \psi_{\underline{H}} \in \{ \hat{\underline{\chi}} , \slashed{\mathrm{tr}} \underline{\chi} \}} [ ( v \slashed{\nabla} )^{m_1} \psi ] [ ( v \slashed{\nabla})^{m_2} \psi_{\underline{H}} ] [ ( v \slashed{\nabla} )^{m_3} \dot{F}_{\slashed{\mathrm{tr}} \underline{\chi} + \frac{2}{v} } ] , $$
where
$$ \slashed{\nabla}_3 \left( \slashed{\mathrm{tr}} \underline{\chi} + \frac{2}{v} \right) = \dot{F}_{\slashed{\mathrm{tr}} \underline{\chi} + \frac{2}{v} } . $$
We examine the $z_4$ term we have that
\begin{align*}
z_4 = & \sum_{m_1 + m_2 = m , m_1 \leq m_2} \int_{u_0}^u v \| [ (v \slashed{\nabla} )^{m_1} \hat{\underline{\chi}} ] [ (v \slashed{\nabla} )^{m_2} \hat{\underline{\chi}} ] \|_{L^2 (S_{u' , v})} \, du' \\ \lesssim &  \sum_{m_1 + m_2 = m , m_1 \leq m_2 , k \in \{0,1,2\} } \int_{u_0}^u  \|  (v \slashed{\nabla} )^{m_1 + k} \hat{\underline{\chi}} \|_{L^2 (S_{u' , v})}  \| (v \slashed{\nabla} )^{m_2} \hat{\underline{\chi}}  \|_{L^2 (S_{u' , v})} \, du' \\ \lesssim & \sum_{m_1 + m_2 = m , m_1 \leq m_2 , k \in \{0,1,2\} } \left( \int_{u_0}^u \mathrm{w}^2 (u' ) \| (v \slashed{\nabla} )^{m_1 + k} \hat{\underline{\chi}}  \|_{L^2 (S_{u' , v})}^2 \, du' \right)^{1/2} \left( \int_{u_0}^u  \frac{\mathrm{w}^2 (u' )}{\mathrm{w}^4 (u' )} \| (v \slashed{\nabla} )^{m_2} \hat{\underline{\chi}}  \|_{L^2 (S_{u' , v})}^2 \, du' \right)^{1/2}  \\ \lesssim & \frac{1}{\mathrm{w}^2 (u)} \sum_{m_1 + m_2 = m , m_1 \leq m_2 , k \in \{0,1,2\} } \left( \int_{u_0}^u \mathrm{w}^2 (u' ) \| (v \slashed{\nabla} )^{m_1 + k} \hat{\underline{\chi}}  \|_{L^2 (S_{u' , v})}^2 \, du' \right)^{1/2} \left( \int_{u_0}^u  \mathrm{w}^2 (u' ) \| (v \slashed{\nabla} )^{m_2} \hat{\underline{\chi}}  \|_{L^2 (S_{u' , v})}^2 \, du' \right)^{1/2} ,
\end{align*}
and we can now note that this term can be bounded by $\epsilon Z_4$ where $Z_4$ can be bounded by the bootstrap assumptions, after being squared and integrated in $u$ with $\mathrm{w}^2 (u)$ as a weight. The other terms can be treated similarly, and in the end we get the estimate
\begin{equation}\label{est:truchi}
\int_{u_0}^{u''} \mathrm{w}^2 (u) \left\| v ( v \slashed{\nabla} )^m \left( \slashed{\mathrm{tr}} \underline{\chi} + \frac{2}{v} \right) \right\|_{L^2 (S_{u,v} )} \, du \lesssim  \left\| v ( v \slashed{\nabla} )^m \left( \slashed{\mathrm{tr}} \underline{\chi} + \frac{2}{v} \right) \right\|_{L^{\infty}_v L^2 (S_{u_0 ,v} )} + \epsilon Z^m , 
\end{equation}
where $Z^m$ can be bounded by the bootstrap assumptions. 

For the fourth top-order derivative we start with $\slashed{\mathrm{tr}} \underline{\chi} + \frac{2}{v}$ and we use equation \eqref{eq:truchir3} which gives the estimate \eqref{est:truchi} for $m=4$ (the proof is identical to the one given above for $m \leq 3$). Now we can estimate the fourth top-order derivative for $\hat{\underline{\chi}}$ for which we need to control the quantity
$$ \int_{u_0}^{u''} \mathrm{w}^2 (u) \| ( v \slashed{\nabla} )^4 \hat{\underline{\chi}} \|_{L^2 (S_{u ,v})}^2 \, du , $$
for any $u'' \leq U$, by using the equation \eqref{eq:coduchi} and Proposition \ref{elliptic}. The terms involving $\slashed{\mathrm{tr}} \underline{\chi}$, $\hat{\underline{\chi}}$, $\eta$ and $\underline{\eta}$ can be treated similarly as above (in the estimates for $\slashed{\mathrm{tr}} \underline{\chi} + \frac{2}{v}$) while for the term involving $\underline{\beta}$ we get the term
$$ \int_{u_0}^u v \| ( v \slashed{\nabla} )^3 \underline{\beta} \|_{L^2 (S_{u' , v} )} \, du' \leq \left(  \int_{u_0}^u \frac{1}{\mathrm{w}^2 (u' )} \, du' \right)^{1/2} \cdot \left(  \int_{u_0}^u \mathrm{w}^2 (u' ) v^2 \| ( v \slashed{\nabla} )^3 \underline{\beta} \|_{L^2 (S_{u' , v} )}^2  \, du' \right)^{1/2} , $$
which after being squared and integrated in $u$ with the weight $\mathrm{w}^2 (u)$ can be bounded by the bootstrap assumptions.


\textbf{Ricci coefficients, 3. $\eta$ and $\underline{\eta}$:} We use the $\slashed{\nabla}_3$ equations \eqref{eq:eta3}, \eqref{eq:ueta3} for both $\eta$ and $\underline{\eta}$ up to 3 derivatives. The reason for not using the $\slashed{\nabla}_4$ equation \eqref{eq:eta4} for $\eta$ is a logarithmic divergence coming from $\beta$. For $\psi \in \{\eta , \underline{\eta} \}$ we have that:
\begin{align*}
v \|  \eta \|_{L^2 (S_{u_2 ,v} )} \lesssim & v \|  \psi \|_{L^2 (S_{u_1 ,v} )} + \int_{u_1}^{u_2} v \| \slashed{\mathrm{tr}}\underline{\chi} \cdot \psi \|_{L^2 (S_{u,v})} \, du \\& + \int_{u_1}^{u_2} v \| \hat{\underline{\chi}} \cdot \psi \|_{L^2 (S_{u,v})} \, du + \int_{u_1}^{u_2} v \|  \underline{\beta} \|_{L^2 (S_{u,v})} \, du \\ \doteq &  v \|  \psi \|_{L^2 (S_{u_1 ,v} )} + e_1 + e_2 +e_3  ,
\end{align*}
For the $e_1$ term we have that
\begin{align*}
e_1 = & \int_{u_1}^{u_2} v \| \slashed{\mathrm{tr}}\underline{\chi} \cdot \psi \|_{L^2 (S_{u,v})} \, du \\ \lesssim & \sum_{k=0}^2 \int_{u_1}^{u_2}  \| ( v \slashed{\nabla} )^k \slashed{\mathrm{tr}}\underline{\chi} \|_{L^2 (S_{u,v} )} \| \psi \|_{L^2 (S_{u,v})} \, du \\ \lesssim & \sum_{k=0}^2 \left( \int_{u_1}^{u_2} \mathrm{w}^2 (u) \| ( v \slashed{\nabla} )^k \slashed{\mathrm{tr}}\underline{\chi} \|_{L^2 (S_{u,v} )}^2 \, du \right)^{1/2} \left( \int_{u_1}^{u_2} \frac{1}{\mathrm{w}^2 (u)} \| ( v \slashed{\nabla} )^k \psi\|_{L^2 (S_{u,v} )}^2 \, du  \right)^{1/2} \\ \lesssim & \epsilon \| ( v \slashed{\nabla} )^k \psi\|_{L^{\infty}_u L^2 (S_{u,v} )}  \sum_{k=0}^2 \left( \int_{u_1}^{u_2} \mathrm{w}^2 (u) \| ( v \slashed{\nabla} )^k \slashed{\mathrm{tr}}\underline{\chi} \|_{L^2 (S_{u,v} )}^2 \, du \right)^{1/2} .
\end{align*}
Note that a similar estimate can be obtained once we apply up to 3 angular derivatives to it, by using Sobolev on the either $\slashed{\mathrm{tr}} \underline{\chi}$ or $\psi$, making sure that $\psi$ is never hit by more than 3 angular derivatives (which is always possible). The $e_2$ term can be treated similarly. For the $e_3$ term we can apply Cauchy-Schwarz and we get that
$$ e_3 = \int_{u_1}^{u_2} v \|  \underline{\beta} \|_{L^2 (S_{u,v})} \, du \lesssim \left( \int_{u_1}^{u_2} \frac{1}{\mathrm{w}^2 (u)} \, du \right)^{1/2} \left( \int_{u_1}^{u_2} \mathrm{w}^2 (u) v^2 \|  \underline{\beta} \|_{L^2 (S_{u,v})}^2 \, du \right)^{1/2} , $$
which is bounded by the bootstrap assumptions, and a similar estimate can be obtained once the same term is hit by up to 3 angular derivatives.

Finally (as far as $\eta$ and $\underline{\eta}$ are concerned) for the top order fourth derivatives, by the estimates \eqref{mu1}, \eqref{mu2} we have that
$$ \sup_u \int_v \mathrm{w}^2 (u) \| ( v \slashed{\nabla} )^4 \psi \|_{L^2 (S_{u,v} )}^2 \lesssim \int_v \mathrm{w}^2 (u) \| ( v \slashed{\nabla} )^4 \psi \|_{L^2 (S_{u_0 ,v} )}^2 + \epsilon \dot{Z}_1 , $$
$$ \sup_v \int_u v^2 \| ( v \slashed{\nabla} )^4 \psi \|_{L^2 (S_{u,v} )}^2 \lesssim \epsilon C_{\psi} + \epsilon \dot{Z}_2 , $$
where $\dot{Z}_1$, $\dot{Z}_2$ are bounded by the bootstrap assumptions, and $C_{\psi}$ is quantity depending on the initial data.

\textbf{Closing the bootstrap assumptions:} We gather together all the previous estimates and we get in the end that:
$$ \sum_{i=0}^4 \mathcal{O}_i +  v^2 \left\| ( v \slashed{\nabla} )^m \left( K - \frac{1}{v^2} \right) \right\|_{L^2 (S_{u ,v} )} \leq  \sum_{i=1}^4 \epsilon_i + \bar{C} \epsilon^{1/2}\sum_{i=1}^4 \epsilon_i  ,$$
for some constant $\bar{C}$, and we can finish our bootstrap argument by choosing $\epsilon$ to be small enough.

\begin{remark}
As in the local case in \cite{weaknull}, $H_U$ is a weak null singularity. This can be shown exactly in the same way as in section 9 of \cite{weaknull} and we refer the reader there for details.
\end{remark}

\section{Proof of Theorem \ref{thm:main2}}\label{p2}
We will only give an outline of the proof of Theorem \ref{thm:main2}. The main differences with the previous proof (as outlined before) is that we work with $\alpha$, $\check{\rho}$ and $\check{\sigma}$ instead of avoiding $\alpha$ and working with $K$ and $\check{\sigma}$, and we treat $\hat{\chi}$ using equation \eqref{eq:chi4} instead of \eqref{eq:chi3}. We will show in detail only the estimates for $\chi$ and $\alpha$ as the rest of the proof works in the case of Theorem \ref{thm:main}. Again we use a bootstrap argument where our bootstrap assumptions are the following:
\begin{equation}\label{bootstraps2}
\sum_{i=1}^4 \mathcal{O}_i^2 + v^{1+\delta} \left\| ( v \slashed{\nabla} )^m \left( K - \frac{1}{v^2} \right) \right\|_{L^2 (S_{u ,v} )} \leq \widetilde{C} \sum_{i=1}^4 \epsilon_i ,
\end{equation}

\paragraph{Estimates for $\hat{\chi}$:} We make use of inequality \eqref{est:b4p} , and we have that for $i \in \{ 0,1,2,3\}$:
\begin{align*}
v^{\delta} \left\| ( v \snabla )^i \hat{\chi} \right\|_{L^2 (S_{u,v} )} \lesssim &  v_0^{\delta} \left\| ( v_0 \snabla )^i \hat{\chi} \right\|_{L^2 (S_{u,v_0 } )} + \int_{v_0}^v (v')^{\delta} \| F^i_{\hat{\chi}} \|_{L^2 (S_{u,v'} )} ,
\end{align*}
where 
$$ \snabla_4 [ ( \snabla )^i \hat{\chi} ) ] + \frac{i+2}{2} \slashed{\mathrm{tr}}\chi [ ( \snabla )^i \hat{\chi} ) ] = \frac{1}{v^i} F^i_{\hat{\chi}} , $$
and so we have that
$$ F^i_{\hat{\chi}} \sim \sum_{\psi \in \{\eta , \underline{\eta} \}} \sum_{k_1 + k_2 + k_3 = i} ( \snabla^{k_2} \psi )^{k_2} ( \snabla^{k_3} F^0_{\hat{\chi}} ) + \sum_{\psi \in \{\eta , \underline{\eta}\} } \sum_{\psi_H \in \{ \hat{\chi} , \slashed{\mathrm{tr}} \chi - \frac{2}{v} \}} \sum_{k_1 + k_2 + k_3 + k_4 = i} ( \snabla^{k_1} \psi )^{k_2} ( \snabla^{k_3} \psi_H ) ( \snabla^{k_4} \hat{\chi} ) , $$ 
for 
$$ F^0_{\hat{\chi}} \doteq -2\omega \hat{\chi} - \alpha . $$
We look only at the part that involves
$$ \snabla^i F^0_{\hat{\chi}} \sim \sum_{i_1 + i_2 = i} ( \snabla^{i_1} \omega ) ( \snabla^{i_2} \hat{\chi} ) + \snabla^i \alpha , $$
as the rest of $F^i_{\hat{\chi}}$ consists of terms that are of similar difficulty to deal with, or easier. We also consider the case for $i=3$ (the rest can be treated similarly). We have that
\begin{align*}
\int_{v_0}^v ( v' )^{\delta} & \| ( v \snabla )^3 F^0_{\hat{\chi}} \|_{L^2 (S_{u,v'} )} \, dv' \sim  \int_{v_0}^v ( v' )^{\delta} \left( \sum_{i_1 + i_2 = 3} \| [ ( v' \snabla )^{i_1} ) \omega ] [ ( v' \snabla )^{i_2} \hat{\chi} ] \|_{L^2 ( S_{u,v' } )} + \| ( v' \snabla )^i \alpha \|_{L^2 (S_{u,v' })} \right) \, dv'  \\ \lesssim & \sum_{k \leq 3}  \int_{v_0}^v ( v' )^{-1+\delta} \| ( v' \snabla )^k \omega \|_{L^2 (S_{u,v' })} \| ( v' \snabla )^k \hat{\chi} \|_{L^2 (S_{u,v' })} \, dv' + \left( \int_{v_0}^v \frac{1}{(v' )^{1+\delta' + \delta}} \, dv' \right)^{1/2} \left( \int_{v_0}^v (v' )^{1+\delta'} \| ( v \snabla )^3 \alpha \|^2_{L^2 (S_{u,v' })} \, dv' \right)^{1/2} \\ \lesssim &   \sum_{k_1 \leq 3 , k_2 \leq 3}  \int_{v_0}^v ( v' )^{-2-\delta} \| ( v' )^{1+\delta} ( v' \snabla )^{k_1} \omega \|_{L^2 (S_{u,v' })} \| (v' )^{\delta} ( v' \snabla )^{k_2} \hat{\chi} \|_{L^2 (S_{u,v' })} \, dv' + \epsilon^{1/2} \| v^{5/2} ( v \snabla )^3 \alpha \|_{L^2 (H_u )} \\ \lesssim & \epsilon^{1/2} \left[ \sup_{v'} \sum_{k_1 \leq 3 , k_2 \leq 3} \left( \| ( v' )^{1+\delta} ( v' \snabla )^{k_1} \omega \|_{L^2 (S_{u,v' })} \| (v' )^{\delta} ( v' \snabla )^{k_2} \hat{\chi} \|_{L^2 (S_{u,v' })} \right) + \| v^{3/2} ( v \snabla )^3 \alpha \|_{L^2 (H_u )} \right] ,
\end{align*}
using Sobolev's inequality \eqref{sobolev3}, where smallness in $\epsilon$ can be achieved by the choice of $v_0$, and the other terms can be bounded by the bootstrap assumptions.

\paragraph{Estimates for $\alpha$:} We use equations \eqref{eq:alpha3} and \eqref{eq:beta4} and by Propositions \ref{intbyparts} and \ref{intbyparts3} we have that for $i \in \{0,1,2,3\}$:
\begin{align*}
\int_{H_{u_2}} v^{3} | ( v \snabla )^i \alpha |^2 + & \int_{\underline{H}_{v_2}} v_2^{3} | ( v_2 \snabla )^i \beta |^2 \lesssim \int_{H_{u_1}} v^{5} | ( v \snabla )^i \alpha |^2 + \int_{\underline{H}_{v_1}} v_1^{3} | ( v_1 \snabla )^i \beta |^2 \\ & + \int_{D_{u,v}} v^{3} \left( | \langle ( v \snabla )^i \alpha , F^i_{\alpha} \rangle | + | \langle ( v \snabla )^i \beta , F^i_{\beta , 4} \rangle | \right) ,
\end{align*}
where
$$ \snabla_3 ( \snabla^i \alpha ) + \frac{i+1}{2} \slashed{\mathrm{tr}} \underline{\chi} ( \snabla^i \alpha ) - \snabla \hat{\otimes} ( \snabla^i \beta )= \frac{1}{v^i} F^i_{\alpha} , $$
and 
$$ \snabla_4 ( \snabla^i \beta ) + \frac{i+4}{2} \slashed{\mathrm{tr}} \chi ( \snabla^i \beta ) - \slashed{\mathrm{div}} ( \snabla^i \alpha ) = \frac{1}{v^i} F^i_{\beta , 4} . $$
Note that we have that
$$ F^i_{\alpha} \sim \sum_{\psi \in \{\eta , \underline{\eta} \}} \sum_{k_1 + k_2 + k_3 = i} ( \snabla^{k_2} \psi )^{k_2} ( \snabla^{k_3} F^0_{\alpha} ) + \sum_{\psi \in \{\eta , \underline{\eta}\} } \sum_{\psi_{\underline{H}} \in \{ \underline{\hat{\chi}} , \slashed{\mathrm{tr}} \underline{\chi}  \}} \sum_{k_1 + k_2 + k_3 + k_4 = i} ( \snabla^{k_1} \psi )^{k_2} ( \snabla^{k_3} \psi_{\underline{H}} ) ( \snabla^{k_4} \alpha) , $$
$$ F^i_{\beta , 4} \sim  \sum_{\psi \in \{\eta , \underline{\eta} \}} \sum_{k_1 + k_2 + k_3 = i} ( \snabla^{k_2} \psi )^{k_2} ( \snabla^{k_3} F^0_{\beta , 4} ) + \sum_{\psi \in \{\eta , \underline{\eta}\} } \sum_{\psi_H \in \{ \hat{\chi} , \slashed{\mathrm{tr}} \chi - \frac{2}{v} \}} \sum_{k_1 + k_2 + k_3 + k_4 = i} ( \snabla^{k_1} \psi )^{k_2} ( \snabla^{k_3} \psi_H ) ( \snabla^{k_4} \beta) , $$ 
for
$$ F^0_{\alpha} = - 3 ( \hat{\chi} \rho +^{*} \hat{\chi} \sigma ) + ( \zeta + 4 \eta ) \hat{\otimes} \beta , $$
and
$$ F^0_{\beta , 4} = -2\omega \beta + \eta \alpha . $$
Once again we will examine only the parts of the inhomogeneity that involve $\snabla^i F^0_{\alpha}$ and $\snabla^i F^0_{\beta , 4}$ as the rest can be treated either similarly or are easier to deal with, and we will take $i=3$ as the other derivatives can be dealt with similarly. First we have that
\begin{align*}
\int_{D_{u,v}} v^{3}  | \langle ( v \snabla )^3 \alpha , ( v \snabla )^3 F^0_{\alpha} \rangle | \lesssim & \sum_{i_1+i_2=3} \int_{D_{u,v}} v^{3} | ( v \snabla )^3 \alpha |  | ( v \snabla )^{i_1} \hat{\chi} | | ( v \snabla )^{i_2} \check{\rho} | +  \sum_{i_1+i_2=3} \int_{D_{u,v}} v^{3} | ( v \snabla )^3 \alpha |  | ( v \snabla )^{i_1} \hat{\chi} | | ( v \snabla )^{i_2} \check{\sigma} | \\ & +  \sum_{i_1+i_2+i_3=3} \int_{D_{u,v}} v^{3} | ( v \snabla )^3 \alpha |  | ( v \snabla )^{i_1} \hat{\chi} | | ( v \snabla )^{i_2} \hat{\chi} | | ( v \snabla )^{i_3} \underline{\hat{\chi}} | \\ &+  \sum_{i_1+i_2=3} \sum_{\psi \in \{ \eta , \underline{\eta} \}} \int_{D_{u,v}} v^{3} | ( v \snabla )^3 \alpha |  | ( v \snabla )^{i_1} \psi | | ( v \snabla )^{i_2} \beta | \\ \doteq & a_1 + a_2 + a_3 + a_4 .
\end{align*}
We then have that
\begin{align*}
a_1 \doteq & \sum_{i_1+i_2=3} \int_{D_{u,v}} v^{3} | ( v \snabla )^3 \alpha |  | ( v \snabla )^{i_1} \hat{\chi} | | ( v \snabla )^{i_2} \check{\rho} | \\ \lesssim & \epsilon \sup_{u \in [u_1 , u_2 ]} \int_{H_u} v^3 | ( v \snabla )^3 \alpha |^2 + \sum_{i_1 + i_2 = 3} \int_{D_{u,v}} v^3 | ( v \snabla )^{i_1} \hat{\chi} |^2 | ( v \snabla )^{i_2} \check{\rho} |^2 \\ \lesssim & \epsilon \sup_{u \in [u_1 , u_2 ]}\int_{H_u} v^3 | ( v \snabla )^3 \alpha |^2 + \sup_{v'} \sum_{k_1 \leq 3 , k_2 \leq 3} \| ( v' )^{\delta} ( v' \snabla )^3 \hat{\chi} \|_{L^2 (S_{u,v'} )}^2 \sup_{u \in [ u_1 , u_2 ]} \left( \int_{H_u} v^{1-2\delta} | ( v \snabla )^{k_2} \check{\rho} |^2 \right) ,  
\end{align*}
where we gained smallness in $\epsilon$ by the choice of $U$ and used Sobolev's inequality \eqref{sobolev3}. We note that the first term of the last expression can be absorbed by the left-hand side of the initial estimate, while the last term can be bounded by the bootstrap assumptions. The term $a_2$ can be treated similarly. For $a_3$ we have that
\begin{align*}
a_3 \doteq & \sum_{i_1+i_2+i_3=3} \int_{D_{u,v}} v^{3} | ( v \snabla )^3 \alpha |  | ( v \snabla )^{i_1} \hat{\chi} | | ( v \snabla )^{i_2} \hat{\chi} | | ( v \snabla )^{i_3} \underline{\hat{\chi}} | \\ \lesssim & \epsilon \sup_{u \in [u_1 , u_2 ]}\int_{H_u} v^3 | ( v \snabla )^3 \alpha |^2 + \sum_{i_1+i_2+i_3=3} \int_{D_{u,v}} v^{3}  | ( v \snabla )^{i_1} \hat{\chi} |^2 | ( v \snabla )^{i_2} \hat{\chi} |^2 | ( v \snabla )^{i_3} \underline{\hat{\chi}} |^2 \\ \lesssim &  \epsilon \sup_{u \in [u_1 , u_2 ]}\int_{H_u} v^3 | ( v \snabla )^3 \alpha |^2 \\ & + \sup_{v' }\sum_{k_1 , k_2  \leq 3} \left( \| (v' )^{\delta} ( v' \snabla )^{k_1} \hat{\chi} \|_{L^2 (S_{u,v' } )}^4 \left\| \frac{1}{(v' )^{1-\delta}} ( v' \snabla )^{k_2} \underline{\chi} \right\|_{L^2 (S_{u,v' } )}^2 \right) \left( \int_{u_1}^{u_2} \int_{v_1}^{v_2} \frac{1}{\widetilde{\mathrm{w}}^2 (u)} \frac{v^5}{v^{4+6\delta}} ,\ dv du   \right) ,
\end{align*}
and the first term of the last expression can be absorbed by the left-hand side of the initial estimate, while the second term can be bounded by the bootstrap assumptions due to the assumptions of $\widetilde{\mathrm{w}}$ and that $\delta > \frac{1}{3}$. Lastly for $a_4$ we consider only the term involving $\eta$ (as the other one can be treated similarly) and we have that
\begin{align*}
\sum_{i_1+i_2=3}  \int_{D_{u,v}} v^{3} & | ( v \snabla )^3 \alpha |  | ( v \snabla )^{i_1} \eta | | ( v \snabla )^{i_2} \beta | \lesssim  \epsilon \sup_{u \in [u_1 , u_2 ]}\int_{H_u} v^3 | ( v \snabla )^3 \alpha |^2 + \sum_{i_1+i_2=3}  \int_{D_{u,v}} v^{3} | ( v \snabla )^{i_1} \eta |^2 | ( v \snabla )^{i_2} \beta |^2 \\ \lesssim & \epsilon \sup_{u \in [u_1 , u_2 ]}\int_{H_u} v^3 | ( v \snabla )^3 \alpha |^2 + \sum_{k_1 , k_2 \leq 3}  \sup_{v'} \| ( v' )^{\delta} ( v' \snabla )^{k_1} \eta \|_{L^2 (S_{u,v' })}^2 \sup_{u \in [u_1 , u_2 ]} \int_{H_u} v^{3-2\delta} | ( v \snabla )^{k_2} \beta |^2  ,
\end{align*}
and once again the first term of the last expression can be absorbed by the left-hand side of the initial estimate, while the second term can be bounded by the bootstrap assumptions.

On the other hand for $F_{\beta , 4}^0$ we have that
\begin{align*}
\int_{D_{u,v}} v^3 | \langle ( v \snabla )^3 \beta , ( v \snabla )^3 F^0_{\beta , 4} \rangle | \lesssim & \sum_{i_1 + i_2 = 3} \int_{D_{u,v}} v^3 \left( | ( v \snabla )^3 \beta | | ( v \snabla )^{i_1} \omega | | ( v \snabla )^{i_2} \beta | + | ( v \snabla )^3 \beta | | ( v \snabla )^{i_1} \eta | | ( v \snabla )^{i_2} \alpha  | \right) \\ \doteq & b^4_1 + b^4_2 .
\end{align*}
Then we have that
\begin{align*}
b^4_1 \doteq & \sum_{i_1 + i_2 = 3} \int_{D_{u,v}} v^3 | ( v \snabla )^3 \beta | | ( v \snabla )^{i_1} \omega | | ( v \snabla )^{i_2} \beta | \\ \lesssim & \epsilon \sup_{u \in [u_1 , u_2 ]} \int_{H_u} v | ( v \snabla )^3 \beta |^2 + \sum_{i_1 + i_2 = 3} \int_{D_{u,v}} v^5  ( v \snabla )^{i_1} \omega |^2 | ( v \snabla )^{i_2} \beta |^2 \\ \lesssim & \epsilon \sup_{u \in [u_1 , u_2 ]} \int_{H_u} v | ( v \snabla )^3 \beta |^2 + \sum_{k_1 , k_2 \leq 3} \sup_{v'} \| (v' )^{1+\delta} ( v' \snabla )^{k_1} \omega \|_{L^2 (S_{u,v' } )}^2 \sup_{u \in [u_1 , u_2 ]} \int_{H_u} v^{1-2\delta} | ( v \snabla )^{k_2} \beta |^2 ,
\end{align*}
and the first term of the last expression can be absorbed by the left-hand side of the initial estimate while the rest can be bounded by the bootstrap assumptions. Finally we have that
\begin{align*}
b^4_2 \doteq & \sum_{i_1 + i_2 = 3} \int_{D_{u,v}} v^3  | ( v \snabla )^3 \beta | | ( v \snabla )^{i_1} \eta | | ( v \snabla )^{i_2} \alpha  | \\ \lesssim & \epsilon \sup_{u \in [u_1 , u_2 ]} \int_{H_u} v | ( v \snabla )^3 \beta |^2 + \sum_{i_1 + i_2 = 3} \int_{D_{u,v}} v^5 | ( v \snabla )^{i_1} \eta |^2 | ( v \snabla )^{i_2} \alpha  |^2  \\ \lesssim & \epsilon \sup_{u \in [u_1 , u_2 ]} \int_{H_u} v | ( v \snabla )^3 \beta |^2 + \sum_{k_1 , k_2 \leq 3} \sup_{v'} \| ( v' )^{\delta} ( v' \snabla )^{k_1} \eta \|_{L^2 (S_{u,v'} )}^2 \sup_{u \in [ u_1 , u_2 ]} \int_{H_u} v^{3-2\delta}  | ( v \snabla )^{k_2} \alpha  |^2  ,
\end{align*}
and once again the first term of the last expression can be absorbed by the left-hand side of the initial estimate while the rest can be bounded by the bootstrap assumptions.

\begin{remark}
If we assume that $\hat{\underline{\chi}}$ has a jump discontinuity on $\underline{H}_{v_0}$ across the sphere $S_{u_j , v_0}$, then working as in \cite{iwaves1} we can show that $\alpha$ is a measure supported on $H_{u_j}$ (by repeating the arguments of Proposition 75 of \cite{iwaves1}).

Moreover let us note that the estimate for the quantity
$$ \left\| \widetilde{\mathrm{w}} (u) \frac{1}{v^{1-\delta}} \hat{\underline{\chi}} \right\|_{L^2_u L_v^{\infty} L^2 (S_{u,v} )} , $$
gives us as a consequence that if $\hat{\underline{\chi}}$ has a jump discontinuity on the initial hypersurface $\underline{H}_{v_0}$ then the size of the jump remains controlled (and comparable with respect to the constant $C$) by the initial jump on every hypersurface $\underline{H}_v$. 

Our estimates allow us also to define the following quantities:
$$ \Sigma_{\delta} (u , \theta^1 , \theta^2 ) \doteq \lim_{v \rightarrow \infty} v^{1+\delta} \hat{\chi} , \quad \quad \Xi_{\delta} (u , \theta^1 , \theta^2 ) \doteq v^{\delta} \hat{\underline{\chi}} , $$
which are connected through the equation:
$$ \frac{\partial \Sigma_{\delta}}{\partial u} = -\frac{1}{2} \Xi_{\delta} . $$
By also defining the Hawking mass:
$$ m(u,v) \doteq \frac{v}{2} \left( 1+ \frac{1}{16\pi} \int_{S_{u,v}} \slashed{\mathrm{tr}} \chi \slashed{\mathrm{tr}} \underline{\chi} \right) , $$
our estimates also imply the existence of the limit
$$ M(u) \doteq \lim_{v \rightarrow \infty} m (u,v) , $$
which can also be seen to satisfy the equation
$$ \frac{\partial M}{\partial u} = \frac{1}{8\pi} \lim_{v \rightarrow \infty} \left( \int_{S_{u,v}} | \Xi_{\delta} |^2 \right) , $$
which can now give us a mass loss estimate at $\mathcal{I}^{+}$ using the estimate for $\left\| \widetilde{\mathrm{w}} (u) \frac{1}{v^{1-\delta}} \hat{\underline{\chi}} \right\|_{L^2_u L_v^{\infty} L^2 (S_{u,v} )}$.

\end{remark}

\section{Aside: some variations of the two main Theorems}\label{aside}
In this section we make some remarks in the case that we substitute the assumptions on $\hat{\chi}$ with the expected sharp decay in $v$.

\paragraph{Remarks on Theorem \ref{thm:main}:} In Theorem \ref{thm:main} we assume that
\begin{equation}\label{chisharp}
\sum_{i=0}^4 \| v ( v \snabla )^i \hat{\chi} \|_{L^{\infty}_v L^2 (S_{u_0,v} )} \leq \bar{\epsilon} , 
\end{equation}
for some $\bar{\epsilon} > 0$ small enough. This allows us also to assume the following for the Riemann curvature components:
\begin{align*}
\sum_{i=0}^3  \Bigg(  \| v^2 ( v \slashed{\nabla} )^i \beta \|_{L^2_v L^2 (S_{u_0 , v} ) } + &  \sum_{\Psi \in \{ K - \frac{1}{v^2} , \check{\sigma} \}} \| v_0^2 ( v_0 \slashed{\nabla} )^i \Psi_1 \|_{L^2_u L^2 (S_{u , v_0 } )}  \\ + & \| \mathrm{w} (u) v_0 ( v_0 \slashed{\nabla} )^i \underline{\beta} \|_{L^2_u L^2 (S_{u, v_0} )}   + \sum_{\Psi \in \{ K - \frac{1}{v^2} , \check{\sigma} \}} \| \mathrm{w} (u_0 ) v ( v \slashed{\nabla} )^i \Psi_2 \|_{L^2_v L^2 (S_{u_0 , v } ) } \Bigg) \leq \widetilde{\epsilon} ,
\end{align*}
for some $\widetilde{\epsilon} >0$ small enough. Then using the same proof as for Theorem \ref{thm:main}, we can show the following bounds:
$$ \sum_{i=0}^3 \| v ( v \snabla )^i \hat{\chi} \|_{L^{\infty}_{u,v} L^2 (S_{u,v} )} + \sup_u \| ( v \snabla )^4 \hat{\chi} \|_{L^2 (H_u )} \leq C , $$
and
\begin{align*}
\sum_{i=0}^3  \Bigg(  \| v^{2} ( v \slashed{\nabla} )^i \beta \|_{L^{\infty}_u L^2_v L^2 (S_{u , v} ) } + & \sum_{\Psi \in \{ K - \frac{1}{v^2} , \check{\sigma} \}} \| v^{2} ( v \slashed{\nabla} )^i \Psi_1 \|_{L^{\infty}_v L^2_u L^2 (S_{u , v } )}   \\ + & \| \mathrm{w} (u) v ( v \slashed{\nabla} )^i \underline{\beta} \|_{L^{\infty}_v L^2_u L^2 (S_{u, v} )}   + \sum_{\Psi \in \{ K - \frac{1}{v^2} , \check{\sigma} \}} \| \mathrm{w} (u ) v ( v \slashed{\nabla} )^i \Psi_2 \|_{L^{\infty}_u L^2_v L^2 (S_{u , v } ) } \Bigg) \leq C ,
\end{align*}
for $C$ depending on $\bar{\epsilon}$, $\widetilde{\epsilon}$ and the rest of the initial constants from Theorem \ref{thm:main}. Note that for the first bound we use again equation \eqref{eq:chi3} but we do not need to integrate in $v$ as we only care about $L^{\infty}_{u,v} L^2 (S_{u,v})$ norms.

\paragraph{Remarks on Theorem \ref{thm:main2}:} In this case we make the same assumption as \eqref{chisharp} by taking $\delta =1$ everywhere in the assumptions of Theorem \ref{thm:main2}. We can then assume the following for the Riemann curvature components:
\begin{align*}
\sum_{i=0}^3  \Bigg( \| v^{3} ( v \slashed{\nabla} )^i \alpha \|_{L^2_v L^2 (S_{u_0 , v} ) }  + & \| v^{2} ( v \slashed{\nabla} )^i \beta \|_{L^2_v L^2 (S_{u_0 , v} ) } \\ + & \| v_0^{3} ( v_0 \slashed{\nabla} )^i \beta \|_{L^2_u L^2 (S_{u , v_0 } )} +\sum_{\Psi \in \{ \check{\rho} , \check{\sigma} \}} \| v_0^{2} ( v_0 \slashed{\nabla} )^i \Psi_1 \|_{L^2_u L^2 (S_{u , v_0 } )}  \\ & + \| \widetilde{\mathrm{w}} (u) v_0 ( v_0 \slashed{\nabla} )^i \underline{\beta} \|_{L^2_u L^2 (S_{u, v_0} )}   + \sum_{\Psi \in \{ \check{\rho} , \check{\sigma} \}} \| \widetilde{\mathrm{w}} (u_0 ) v ( v \slashed{\nabla} )^i \Psi_2 \|_{L^2_v L^2 (S_{u_0 , v } ) } \Bigg) \leq \hat{\epsilon} ,
\end{align*}
for some $\hat{\epsilon} > 0$ small enough. The same method of proof allows us to obtain the following bounds (aside from the ones for the Ricci components given in Theorem \ref{thm:main2} for $\delta =1$):
\begin{align*}
\sum_{i=0}^3  \Bigg(  \| v^{3} ( v \slashed{\nabla} )^i \alpha \|_{L^{\infty}_u L^2_v L^2 (S_{u , v} )}+&  \| v^{2} ( v \slashed{\nabla} )^i \beta \|_{L^{\infty}_u L^2_v L^2 (S_{u , v} ) } \\ + &  \| v^{3} ( v \slashed{\nabla} )^i \beta \|_{L^{\infty}_v L^2_u L^2 (S_{u , v } )} + \sum_{\Psi \in \{ \check{\rho} , \check{\sigma} \}} \| v^2 ( v \slashed{\nabla} )^i \Psi_1 \|_{L^{\infty}_v L^2_u L^2 (S_{u , v } )}    \\ & + \| \widetilde{\mathrm{w}} (u) v ( v \slashed{\nabla} )^i \underline{\beta} \|_{L^{\infty}_v L^2_u L^2 (S_{u, v} )}   + \sum_{\Psi \in \{ \check{\rho} , \check{\sigma} \}} \| \widetilde{\mathrm{w}} (u ) v ( v \slashed{\nabla} )^i \Psi_2 \|_{L^{\infty}_u L^2_v L^2 (S_{u , v } ) } \Bigg) \leq C ,
\end{align*}
where $C$ depends on $\hat{\epsilon}$ and the other initial constants from the assumptions of Theorem \ref{thm:main2}.

\end{document}